\documentclass[reqno]{amsart}
 \usepackage[letterpaper, left=30mm,right=30mm,top=30mm,bottom=30mm,marginpar=25mm]{geometry}

\usepackage{enumitem}
\usepackage{color}
\usepackage{amsmath}
\usepackage{amssymb}
\numberwithin{equation}{section}
\newtheorem*{assumption*}{Spectral Assumption}
\usepackage{graphicx}
\newtheorem{definition}{Definition}
\newtheorem{theorem}{Theorem}

\newtheorem{rem}[theorem]{Remark}
\newtheorem{lemma}[theorem]{Lemma}
\newcommand{\loc}{\operatorname{loc}}

\let\eps\varepsilon
\allowdisplaybreaks
\usepackage[normalem]{ulem}

\usepackage{amssymb}

\title[Geometric correction for radiative transfer]{Diffusive limits of the steady state radiative heat transfer system: Curvature effects}
\author{Mohamed Ghattassi}
\address{Department of Mathematics, New York University in Abu Dhabi, Saadiyat Island, P.O. Box 129188, Abu Dhabi, United Arab Emirates, {\sf mg6888@nyu.edu}}
\author{Xiaokai Huo}
\address{Department of Mathematics, Iowa State University, 411 Morrill Rd, Ames, 50011, IA, USA, {\sf xhuo@iastate.edu}}
\author{Nader Masmoudi}
\address{Department of Mathematics, New York University in Abu Dhabi, Saadiyat Island, P.O. Box 129188, Abu Dhabi, United Arab Emirates--
Courant Institute of Mathematical Sciences, New York University, 251 Mercer Street, New York, NY 10012, USA, {\sf nm30@nyu.edu}}

\begin{document}

 \begin{abstract}
This paper is devoted to the diffusive limit of the nonlinear radiative heat transfer system with curved boundary domain (\textit{two dimensional disk}). The solution constructed in \cite{ghattassi2022convergence} by the leading order interior solution and the boundary layer corrections fails here to approximate the  solutions in $L^\infty$ sense for the diffusive limit. The present paper aims to construct a geometric correction to the boundary layer problem and obtain a valid approximate solution in $L^\infty$ sense. The main tools to overcome the convergence problem, are to use matched asymptotic expansion techniques, fixed-point theorems, linear and nonlinear stability analysis of the boundary layer problem. In particular, the spectral assumption on the leading order interior solution, which was proposed for the flat case in \cite{Bounadrylayer2019GHM2}, is shown to be still valid which guarantee the stability of the boundary layer expansion with geometric corrections. Moreover, the convergence result established in \cite[Lemma 10]{ghattassi2022convergence} remain  applicable for the approximate solution with geometric corrections.


\end{abstract}
\keywords{Radiative transfer system, Diffusive limits, Boundary layer, Milne problem, geometric correction }

\maketitle   
 \tableofcontents
\section{Introduction}
We consider the following radiative heat transfer system 
\begin{align}
	\eps^2 \Delta T^\eps + \langle \psi^\eps - (T^\eps)^4 \rangle = 0, \label{eq:1}\\
	\eps \beta\cdot\nabla \psi^\eps + \psi^\eps - (T^\eps)^4 = 0,\label{eq:2}
\end{align}
on the two dimensional disk $\Omega = \{x=(x_1,x_2):|x|\le 1\}$ with radiative velocity $\beta \in \mathbb{S}^1$. Here $T^\eps=T^\eps(x)$ is the temperature and $\psi^\eps=\psi^\eps(x,\beta)$ is the radiative intensity and $\langle \cdot \rangle$ denotes the integration over $\mathbb{S}^1$, i.e. $\langle f\rangle := \int_{\mathbb{S}^1} f(\beta)d\beta$. We denote by  $\eps>0$  a small parameter, describing the ratio of a typical photon mean free path to a typical length scale of the problem, for more details we refer to \cite{ghattassi2020diffusive}. The system \eqref{eq:1}-\eqref{eq:2} is supplemented with the non-homogenous Dirichlet boundary conditions 
\begin{align}
	&T^\eps(x)=T_b(x),\quad \text{for any } x\in \partial\Omega, \label{eq:1b}\\
	&\psi^\eps(x,\beta)=\psi_b(x,\beta),\quad \text{for any }(x,\beta) \in \Gamma_-,\label{eq:2b}
\end{align}
where $\partial\Omega$ is the boundary of $\Omega$ and, the inflow boundary $\Gamma_-$ and out flow boundary $\Gamma_+$ are defined respectively by 
\begin{align*}
	&\Gamma_-:=\{0(x,\beta) \in \partial \Omega\times\mathbb{S}^1: n(x)\cdot \beta<0\},\\
	&\Gamma_+:=\{(x,\beta) \in \partial \Omega\times\mathbb{S}^1: n(x)\cdot \beta>0\},
\end{align*}
where $n(x)$ is the exterior normal vector on the boundary. Similarly, the out-flow boundary  and $\Gamma:=\Gamma_+\cup \Gamma_-\cup \Gamma_0$ with $\Gamma_0:=\{(x,\beta)\partial\Omega\times\mathbb{S}^1:n(x)\cdot \beta=0\}$. 

However, when $\psi_b=T_b^4$, there is no boundary layer for \eqref{eq:1}-\eqref{eq:2} and one can prove the solution to \eqref{eq:1}-\eqref{eq:2} converges to the solution of the nonlinear elliptic equation 
\begin{align}\label{eq:ne0}
	\Delta T_0 + \pi \Delta T_0^4 = 0,\quad \psi_0=T_0^4,
\end{align}
with Dirichlet boundary conditions $T_0(x)=T_b(x)$ for $x\in\partial\Omega$. However, for general boundary data, boundary layer exists and
when $\eps\to 0$, $T^\eps$ no longer converges to $T_0$ solution to the above nonlinear elliptic equation \eqref{eq:ne0}. A boundary layer correction $(\bar{T}_0,\bar{\psi}_0)$ should be added to $(T_0,\psi_0)$ to get the approximate solution. For the flat boundary, it was proved in \cite{ghattassi2022convergence} that
\begin{align}\label{eq:conv_0}
	\|T^\eps - T_0 -\bar{T}_0\|_{L^\infty(\Omega)} = O(\eps),\quad \|\psi^\eps - T_0^4 - \bar{\psi}_0\|_{L^\infty(\Omega)} =O(\eps).
	\end{align}
The boundary layer $\bar{T}_0$ is defined by a suitable cut-off of the corresponding half-space boundary Milne problem of system \eqref{eq:1}-\eqref{eq:2}. Let $(\tilde{T}_0,\tilde{\psi}_0)$ be the leading order of the half-space Milne problem.
The above inequalities were proved under the condition that $\tilde{T}_0$ satisfy a spectral assumption \cite[\ref{asA}]{Bounadrylayer2019GHM2}, which reads as 
\begin{enumerate}[label=({\bf\Alph*})]
	\item \label{asA} 
	A function $h \in C^1(\mathbb{R}_{+})$ satisfies the spectral assumption if there exists a constant $\beta>0$, such that 
	\begin{align}
		M\int_0^\infty e^{2\beta \eta} (2h^{\frac32})^2 |\partial_\eta f|^2 d\eta  \ge  4\int_0^\infty  e^{2\beta \eta}  |\partial_\eta (2h^{\frac32})|^2 f^2 d\eta,
		\label{eq:spassump2}
	\end{align}
holds for some constant $M<1$ and for any measurable function $f \in C^{1}(\mathbb{R}_{+})$ with $f(0)=0$.
\end{enumerate}
The spectral assumption implies the linear stability of the boundary layer problem which is crucial for establishing the convergence result \eqref{eq:conv_0}. However, for the not-flat boundary case, curvature effects brings singularities of the boundary layer problem at the boundary and the inequalities \eqref{eq:conv_0} fail to hold. We address this problem by constructing a correction $(\bar{T}_0^\eps,\bar{\psi}^\eps)$ of the boundary layer to improve the approximations near the boundary and erase the singularities. Moreover, it is expected that the spectral assumption \ref{asA} still implies the linear stability of the boundary layer problem with geometric correction.

\subsection{Main results}
\begin{theorem}\label{thm:mainresult}
Let $T_b \in C^2(\partial\Omega)$, $\psi_b \in C^1(\Gamma_-)$ be non-negative on the boundary. Let $(T_0,\psi_0)$ be the solution to \eqref{eq:ap0_inside} and $(\bar{T}_0^\eps,\bar{\psi}_0^\eps)$ be the solution to \eqref{eq:g0}. $\tilde{T}_0^\eps$ is the solution to the geometric correction in the half-space Milne problem \eqref{eq:g0}. Assume $\tilde{T}^\eps_0$ satisfy the spectral assumption \ref{asA} and $\tilde{T}^\eps_0\ge a$ for some constant $a>0$. Then for $\eps$ sufficiently small, the solution to system \eqref{eq:1}-\eqref{eq:2} with boundary conditions \eqref{eq:1b}-\eqref{eq:2b} is unique and satisfies 
\begin{align}\label{eq:mainresult}
	\|T^\eps - T_0 - \bar{T}_0^\eps \|_{L^\infty(\Omega)} =O(\eps),\quad \|\psi^\eps - \psi_0 - \bar{\psi}_0^\eps \|_{L^\infty(\Omega\times\mathbb{S}^1)}=O(\eps).
\end{align}
\end{theorem}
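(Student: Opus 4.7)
The plan is to follow the standard matched asymptotic expansion strategy, but with the Milne problem replaced by its geometric-corrected version $(\tilde{T}_0^\eps,\tilde\psi_0^\eps)$ so that the residual after substitution in \eqref{eq:1}-\eqref{eq:2} is genuinely $o(1)$ in $L^\infty$, rather than $O(1)$ near the curved boundary as in the flat-boundary construction of \cite{ghattassi2022convergence}. Concretely, I would first introduce normal/tangential (polar) coordinates $(r,\theta)$ in a tubular neighborhood of $\partial\Omega$, rescale the normal variable by $\eta=(1-r)/\eps$, and write the ansatz $T^\eps_{\mathrm{app}}:=T_0+\chi(r)\bar T_0^\eps$, $\psi^\eps_{\mathrm{app}}:=\psi_0+\chi(r)\bar\psi_0^\eps$, where $\chi$ is a smooth cutoff supported near $\partial\Omega$ and $\bar T_0^\eps,\bar\psi_0^\eps$ are built from the solution $(\tilde T_0^\eps,\tilde\psi_0^\eps)$ of the geometric Milne problem \eqref{eq:g0}. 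By the construction in \eqref{eq:g0}, the curvature term $\tfrac{\eps}{1-\eps\eta}\partial_\theta$ that caused the singularity in the flat reduction is absorbed into the boundary layer equation itself.

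Next I would insert $(T^\eps_{\mathrm{app}},\psi^\eps_{\mathrm{app}})$ in \eqref{eq:1}-\eqref{eq:2}, expand the nonlinearity $(T^\eps_{\mathrm{app}})^4=T_0^4+4T_0^3\chi\bar T_0^\eps+\ldots$, and verify that, by the choice of $(T_0,\psi_0)$ and of $(\bar T_0^\eps,\bar\psi_0^\eps)$, the residuals $R^\eps_T,R^\eps_\psi$ are $O(\eps)$ in $L^\infty$, with the error localized either in the matching region (from the cutoff commutator $[\Delta,\chi]$, which is controlled by the exponential decay of $\tilde T_0^\eps$ away from $\eta=0$) or in the $O(\eps)$ tail of the Taylor expansion of the curvature factor $1/(1-\eps\eta)$. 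The boundary values match exactly by construction of the Milne problem, so $R^\eps$ vanishes on $\partial\Omega$ and $\Gamma_-$.

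Then I would set up the remainder $(U^\eps,\Psi^\eps):=(T^\eps-T^\eps_{\mathrm{app}},\psi^\eps-\psi^\eps_{\mathrm{app}})$ and show it satisfies the linearized system obtained by subtracting, with coefficients depending on $T^\eps_{\mathrm{app}}$ (i.e.\ essentially on $\tilde T_0^\eps$ in the boundary layer region and on $T_0$ in the interior) and with forcing $R^\eps$. I would reduce the second equation to an expression of $\Psi^\eps$ in terms of $U^\eps$ via the transport operator $\eps\beta\cdot\nabla+1$, substitute into the averaged first equation, and estimate $U^\eps$ by a weighted energy/$L^2$ method that uses the spectral assumption \ref{asA}: the positivity of the associated bilinear form, together with the lower bound $\tilde T_0^\eps\geq a>0$, gives a coercivity estimate of the type $\|U^\eps\|_{H^1}\lesssim\|R^\eps\|_{L^2}$. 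From this $L^2$ bound I would bootstrap to $L^\infty$ by running the transport equation for $\Psi^\eps$ along characteristics (giving $\|\Psi^\eps\|_{L^\infty}\lesssim\|U^\eps\|_{L^\infty}+\|R^\eps_\psi\|_{L^\infty}$) and using an elliptic $L^\infty$ estimate on the averaged equation for $U^\eps$, exactly as in \cite[Lemma 10]{ghattassi2022convergence}. The nonlinear terms $(U^\eps)^2,(U^\eps)^3,(U^\eps)^4$ are absorbed by a standard fixed-point / continuity argument, which also yields uniqueness for $\eps$ small.

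The main obstacle I expect is verifying that the spectral assumption \ref{asA} still gives linear stability when the background profile is the geometric-corrected $\tilde T_0^\eps$ rather than the flat Milne profile of \cite{Bounadrylayer2019GHM2}: one must show that the $\eps$-dependent curvature perturbation in the Milne problem does not destroy the coercivity constant (uniformly in $\eps$), and that the exponential-weight parameter $\beta$ can be chosen independently of $\eps$. A secondary, but technically delicate, difficulty is the analysis of the geometric Milne problem itself near the grazing set $n(x)\cdot\beta=0$, where the effective drift degenerates; the cutoff $\chi$ and the $L^\infty$ decay estimate for $\tilde T_0^\eps-T_0|_{\partial\Omega}$ have to be sharp enough to ensure the matching error is $O(\eps)$ rather than $O(1)$, which was precisely the point at which the flat construction broke down.
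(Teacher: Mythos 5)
Your overall architecture (geometric Milne problem absorbing the curvature term, spectral assumption for linear stability, residual computation, fixed point for the nonlinearity, $L^\infty$ bootstrap via characteristics) matches the paper's strategy. But there is one genuine gap that would make the argument fail as written: you build the approximate solution only to leading order, $T^\eps_{\mathrm{app}}=T_0+\chi\bar T_0^\eps$, and then claim a stability estimate of the form $\|U^\eps\|\lesssim\|R^\eps\|$ with $R^\eps=O(\eps)$. The coercivity available here is of the type in the paper's Lemma \ref{lm.g2}, namely $-\int_\Omega 4(T^{a,\eps})^3 g\,\Delta g\,dx\ge\kappa\|\nabla g\|_{L^2}^2-C\|g\|_{L^2}^2$, and it must be fed into the equation $\eps^2\Delta U+\langle\Psi-4(T^{a,\eps})^3U\rangle=\dots$, whose elliptic part carries the factor $\eps^2$. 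Closing the remainder estimate (and the nonlinear fixed point, where one also needs smallness of $(U^\eps)^2,\dots,(U^\eps)^4$ in the relevant norms) costs fixed negative powers of $\eps$, so an $O(\eps)$ residual does not yield an $O(\eps)$ remainder. This is precisely why the paper constructs the full expansion $T^{a,\eps}=\sum_{k=0}^N\eps^k(T_k^\eps+\bar T_k^\eps)$ with $N$ large, proves $\mathcal{R}_1,\mathcal{R}_2=O(\eps^{N+1})+O(e^{-\lambda\delta/4\eps})$ in Section \ref{sec52}, runs the fixed point against that high-order residual, and only at the end discards the terms $k\ge1$, each of which is $O(\eps)$ in $L^\infty$, to obtain \eqref{eq:mainresult}. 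Without the high-order expansion (and hence without the linearized Milne problems \eqref{eq:g1}--\eqref{eq:g2} and their well-posedness, which occupy Section \ref{sec4}), your scheme does not close.

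A secondary remark: what you list as the ``main obstacle'' --- that the spectral assumption \ref{asA} still yields coercivity, uniformly in $\eps$, when the background is the geometric-corrected profile $\tilde T_0^\eps$ --- is not a side issue to be checked but the technical core of the paper. It is handled by showing that the geometric terms $F(\eps;\eta)\partial_\eta$ and $F(\eps;\eta)\cos\phi\,\partial_\phi$ contribute only $O(\eps)$ perturbations to the energy identities (via the bounds \eqref{eq:Fin1}--\eqref{eq:Fin4} and the conserved relation $\partial_\eta G=\langle\sin\phi\,\varphi\rangle$), in Theorems \ref{thm.nm} and \ref{thm.linearmilne} and in Lemma \ref{lm.g2}. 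Your proposal correctly identifies where the difficulty lies but does not supply the argument.
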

The above theorem shows that by adding the geometric correction boundary layer solution $(\bar{T}_0^\eps,\bar{\psi}_0^\eps)$, \eqref{eq:conv_0} still holds for the non-flat boundary and the sum of the interior solution $(T_0,\psi_0)$ and the geometric correction $(\bar{T}_0^\eps,\bar{\psi}_0^\eps)$ give an approximate solution to system \eqref{eq:1}-\eqref{eq:2} valid in the diffusive limit.
\subsection{Contributions}
When $T^4 = \langle \psi\rangle/2\pi$, system \eqref{eq:1}-\eqref{eq:2} reduce to the classical transport equation $\eps\beta\cdot \nabla \psi^\eps + \psi^\eps - \langle \psi^\eps\rangle/2\pi=0$. The diffusive limit was first studied in \cite{bensoussan1979boundary}, where an approximate solution is constructed by adding interior solution and boundary layer corrections. In \cite{wu2015geometric}, a counter-example shows that such an approximation can fail when the boundary is not flat. By introducing a geometric correction to the boundary layer problem, a valid approximate solution was constructed and can be proved to satisfy $\|\psi^\eps-\psi_0-\bar{\psi}_0\|_{L^\infty(\Omega\times\mathbb{S}^1)} =O(\eps)$, where $\psi_0 \in L^\infty(\Omega)$ satisfy $\Delta \psi_0 =0$ and $\bar{\psi}_0^\eps$ is the solution of the geometric corrected boundary correction. The aim of this paper is to find a similar geometric correction to the approximate solution for system \eqref{eq:1}-\eqref{eq:2}. However, the above discussion is not comprehensive: we refer the reader to the review articles, \cite{gie2010boundary,gie2016recent} and references therein for a more thorough review of the boundary layer analysis of some singular perturbation problems.

We now state the difference between our work  and \cite{wu2015geometric}. First, our system \eqref{eq:1}-\eqref{eq:2} is nonlinear which generates a coupling between  the interior solution and the boundary layer corrections. Indeed, to construct the boundary layer expansions, we need to approximate the interior solutions by their Taylor's expansion. Moreover, the corresponding boundary half-space Milne problems associated to system \eqref{eq:1}-\eqref{eq:2} are nonlinear for the leading order and linear for higher orders, different from \cite{wu2015geometric} where the Milne problems are linear for any order of expansions. The coupling between the elliptic and transport equation brings additional difficulties for proving the existence, uniqueness and stability of the Milne problems. In particular, the spectral assumption \ref{asA} is needed to show the linear stability of the Milne problems in our problem.
We use techniques from our previous work \cite{Bounadrylayer2019GHM2} but accounting for the geometric corrections. We can show the spectral assumption \ref{asA} also implies the linear stability of the geometric corrected Milne problem. Furthermore, due to the nonlniearity of our problem  it's not possible to derive the estimates \eqref{eq:mainresult} by using the Maximum principle, since the difference $(T^\eps-T_0-\bar{T}_0^\eps,\psi^\eps-\psi_0-\bar{\psi}^\eps)$ satisfies a different system from \eqref{eq:1}-\eqref{eq:2}. Instead, the Banach fixed point theorem is used. We here show that such a technique used in \cite{ghattassi2022convergence} for the flat case, can be extended to the curvature boundary case by considering the geometric corrections.
\subsection{Organizations} Our paper is organized as follows: In the next section, we use asymptotic analysis to construct the approximate interior solution and boundary layer corrections with geometric corrections. Section \ref{sec3} is devoted to the analysis of leading order boundary layer Milne problem. Section \ref{sec4} is devoted to the analysis of linear Milne problem of the higher order boundary expansions. In section \ref{sec5}, the proof of Theorem \ref{thm:mainresult} is given.

\textbf{Notations :} Throughout the paper, we use $T_k$, $k\ge 0$ to denote the interior solution and $\bar{T}_k$, $k\ge 0$ for the boundary layer correction,  $\bar{T}_k^\eps$, $k\ge 0$ for the geometric corrected boundary layer correction., and $\tilde{T}_k$, $k\ge 0$ for the solution of the half-space Milne problem. We use $C$ to denote the generic constant.
\section{Approximate solutions} \label{sec:2}
In this section, we construct approximation solutions to system \eqref{eq:1}-\eqref{eq:2} using asymptotic expansions.
\subsection{Interior expansions}
We take the ansatz 
\begin{align*}
	T^\eps \sim \sum_{k=0}^\infty \eps^k T_k, \quad \psi^\eps \sim \sum_{k=0}^\infty \eps^k \psi_k
\end{align*}
into system \eqref{eq:1}-\eqref{eq:2}. Collecting terms with the same order gives 
\begin{align}
	&\left\{\begin{matrix}\langle \psi_0 - T_0^4 \rangle = 0,  \nonumber \\
	\psi_0 - T_0^4  = 0,\end{matrix}\right. \nonumber \\
	&\left\{\begin{matrix}\langle \psi_1 - 4 T_0^3 T_1 \rangle = 0,  \nonumber \\
	\beta\cdot\nabla \psi_0 + \psi_1 - 4T_0^3 T_1 = 0,\end{matrix} \right.  \nonumber \\
	&\left\{\begin{matrix} \Delta T_0 + \langle \psi_2 - 4T_0^3 T_2 - 6 T_0^2 T_1^2 \rangle =0, \\ \beta\cdot \nabla \psi_1 + \psi_2 -  4T_0^3 T_2 - 6 T_0^2 T_1^2 =0, \end{matrix}\right.  \nonumber \\
	&\qquad \vdots  \nonumber \\
	&\left\{\begin{matrix}\Delta T_{k-2} + \langle \psi_k  -\mathcal{C}(T,k)\rangle =0, \\  \beta\cdot \nabla \psi_{k-1} + \psi_k -\mathcal{C}(T,k) = 0,\end{matrix}\right. \label{eq:ii-1}
\end{align}
where $\mathcal{C}(T,k)$ is defined by 
\begin{align*}
	\mathcal{C}(T,k) = \sum_{\substack{i+j+l+m=k\\i,j,l,m\ge 0}} T_i T_j T_l T_m.
\end{align*}
Comparing the two equations in \eqref{eq:ii-1} gives 
\begin{align*}
	\Delta T_{k-2} = \nabla\cdot \langle \beta \psi_{k-1} \rangle.
\end{align*}
Using $\psi_{k-1} = \mathcal{C}(T,k-1) - \beta\cdot \nabla \psi_{k-2}$ in the above equation leads to
\begin{align*}
	\Delta T_{k-2} =- \nabla\cdot \langle \beta \beta \cdot\nabla \psi_{k-2} \rangle.
\end{align*}
Using  $\psi_{k-2} = \mathcal{C}(T,k-2) - \beta\cdot \nabla \psi_{k-3}$ gives 
\begin{align*}
	\Delta T_{k-2} = - \nabla \cdot \langle \beta\beta\cdot \nabla \mathcal{C}(T,k-2) \rangle + \langle (\beta\cdot\nabla)^3 \psi_{k-3}\rangle, 
\end{align*}
which is 
\begin{align*}
	\Delta T_{k-2} + \pi \Delta \mathcal{C}(T,k-2) =  \langle (\beta\cdot\nabla)^3 \psi_{k-3}\rangle.
\end{align*}
Thus we get $(T_k,\psi_k)$ satisfy for any $k\ge 0$,
\begin{align*}
	&\Delta T_k + \pi \Delta \mathcal{C}(T,k) = \langle (\beta\cdot\nabla)^3 \psi_{k-1} \rangle,\\
	&\psi_k = \mathcal{C}(T,k) - \beta\cdot \nabla \psi_{k-1}.
\end{align*}
Note here $T_k,\psi_k$ are zero for any index $k<0$. Considering $\mathcal{C}(T,0)=T_0^4$ and for $k\ge 1$, $\mathcal{C}(T,k)=4T_0T_k + \mathcal{E}(T,k-1)$ with 
\begin{align*}
	\mathcal{E}(T,k-1) =  \sum_{\substack{i+j+l+m=k\\1\le i,j,l,m\le k-1}} T_i T_j T_l T_m,
\end{align*}
which only depends on $T_s$, $s\ge k-1$, we can rewrite the previous equations as for $k=0$,
\begin{align*}
	\Delta T_0 + \pi \Delta T_0^4 = 0,\quad \psi_0 = T_0^4,
\end{align*}
and for $k\ge 1$,
\begin{align*}
	\Delta T_k + \pi\Delta (4T_0^4 T_k) =  \langle (\beta\cdot\nabla)^3 \psi_{k-1} \rangle - \pi \Delta \mathcal{E}(T,k-1),\quad \psi_{k} = 4T_0^3 T_k +  \mathcal{E}(T,k-1) - \beta\cdot \nabla \psi_{k-1}.
\end{align*}
Therefore, the leading order system is a nonlinear elliptic equation while the higher order system is a linear elliptic equation.



\subsection{Boundary layer corrections}
The interior expansions $T^o:=\sum_{k=0}^\infty \eps^k T_k$ and $\psi^o := \sum_{k=0}^\infty \eps^k \psi_k$ provide a good approximation to system \eqref{eq:1}-\eqref{eq:2} inside the domain. Near the boundary we need to introduce a correction term $(T^i,\psi^i)$ describing the effects of boundary layers. Suppose $T^\eps = T^o + T^i$, $\psi^\eps = \psi^o + \psi^i$, then system \eqref{eq:1}-\eqref{eq:2} becomes 
\begin{align*}
	&\eps^2 \Delta (T^o + T^i) + \langle \psi^o+ \psi^i - (T^o + T^i)^4 \rangle =0, \\
	&\eps \beta\cdot \nabla (\psi^o + \psi^i) + \psi^o+ \psi^i - (T^o + T^i)^4 =0.
\end{align*}
Using the fact that $T^o$, $\psi^o$ solves system \eqref{eq:1}-\eqref{eq:2}, the above system becomes  
\begin{align}
	&\eps^2 \Delta T^i + \langle \psi^i - (T^o + T^i)^4 + (T^o)^4 \rangle =0,\label{eq:i1}\\
	&\eps \beta\cdot \nabla\psi^i + \psi^i - (T^o + T^i)^4 + (T^o)^4 =0.\label{eq:i2}
\end{align}

We now follow \cite{wu2015geometric} by  introducing the transformation of variables $(x_1,x_2)\to (r,\theta)$ given by
\begin{align*}
	\left\{\begin{array}[]{ll}
		x_1 = (1-r)\cos\theta, \\
		x_2 = (1-r)\sin\theta, 
	\end{array}\right.
\end{align*}
with $r$ denotes the distance to the boundary $\partial\Omega$. With the coordinate $(r,\theta)$ and $\beta=(\beta_1,\beta_2)$, system \eqref{eq:i1}-\eqref{eq:i2} can be rewritten as 
\begin{align*}
	&\eps^2 \partial_r^2 T^i - \frac{\eps^2}{1-r} \partial_r T^i + \frac{\eps^2}{(1-r)^2}\partial^{2}_\theta T^i + \langle \psi^i - (T^o + T^i)^4 + (T^o)^4 \rangle = 0,\\
	&-\eps (\beta_1 \cos\theta+\beta_2\sin\theta)\partial_r \psi^i - \frac{\eps}{1-r}(\beta_1 \sin\theta - \beta_2 \cos\theta)\partial_\theta \psi^i \nonumber\\
	&\quad + \psi^i - (T^o + T^i)^4 + (T^o)^4  = 0,
\end{align*}
and the boundary conditions are 
\begin{align*}
	&T^i(0,\theta) + T^o(0,\theta)= T_b(\theta),\\
	&\psi^i(0,\theta,\beta) + \psi^o(0,\theta) = \psi_b(\theta,\beta),\quad \text{for } \beta_1 \cos\theta + \beta_2 \sin\theta <0. 
\end{align*}
We introduce a new coordinate $\eta=r/\eps$ and rewrite the above system as 
\begin{align*}
	&\partial_\eta^2 T^i - \frac{\eps}{1-\eps \eta} \partial_\eta T^i + \frac{\eps^2}{(1-\eps\eta)^2} \partial_\theta^2 T^i +   \langle \psi^i - (T^o + T^i)^4 + (T^o)^4 \rangle = 0,\\
	&-(\beta_1 \cos\theta+\beta_2\sin\theta) \partial_\eta \psi^i - \frac{\eps}{1-\eps\eta}(\beta_1\sin\theta - \beta_2\cos\theta) \partial_\theta \psi^i \nonumber\\
	&\quad + \psi^i - (T^o + T^i)^4 + (T^o)^4 =0.
\end{align*}
By introducing the transformation of variable 
\begin{align*}
	\left\{\begin{array}[]{ll}
		\beta_1 = -\sin\xi, \\
		\beta_2 = -\cos\xi,
	\end{array}\right.
\end{align*}
we can rewrite the system as 
\begin{align}
	&\partial_\eta^2 T^i - \frac{\eps}{1-\eps \eta} \partial_\eta T^i + \frac{\eps^2}{(1-\eps\eta)^2} \partial_\theta^2 T^i +   \langle \psi^i - (T^o + T^i)^4 + (T^o)^4 \rangle = 0, \label{eq:i/1}\\
	&\sin(\theta+\xi) \partial_\eta \psi^i - \frac{\eps}{1-\eps\eta}\cos(\theta+\xi) \partial_\theta\psi^i  + \psi^i - (T^o + T^i)^4 + (T^o)^4 =0. \label{eq:i/2}
\end{align}
Since we are considering boundary layers, we may take Taylor's expansion of $(T^o=T^o(\eps\eta,r),\psi^o=\psi^o(\eps\eta,r))$ around the boundary $\eta=0$ and get 
\begin{align*}
	T^o = \sum_{k=0}^\infty \eps^k P_k,\quad \psi^o = \sum_{k=0}^\infty \eps^k Q_k,
\end{align*}
where 
\begin{align*}
	P_k (\eta,\theta) = \sum_{l=0}^k\frac{\eta^l}{l!} \frac{\partial^l}{\partial_r^l} T_{k-l}(0,\theta),\quad Q_k(\eta,\theta,\xi) = \sum_{l=0}^k\frac{\eta^l}{l!} \frac{\partial^l}{\partial_r^l} \psi_{k-l}(0,\theta)
\end{align*}
Taking the ansatz
\begin{align}\label{eq:ansatz2}
	T^i = \sum_{k=1}^\infty \eps^k \bar{T}_k,\quad \psi^i = \sum_{k=1}^\infty \eps^k \bar{\psi}_k
\end{align}
into the system \eqref{eq:i/1}-\eqref{eq:i/2} and collecting terms with the same order, we get
\begin{align}
	&\left\{\begin{array}{ll}
		\partial_\eta^2 \bar{T}_0 + \langle \bar{\psi}_0 - (\bar{T}_0+P_0)^4 + P_0^4 \rangle = 0,\\
		\sin(\theta+\xi)\partial_\eta \bar{\psi}_0 + \bar{\psi}_0 - (\bar{T}_0+P_0)^4 + P_0^4 =0,
	\end{array}\right. \label{eq:bar/1} \\
	&\left\{\begin{array}[]{ll}
		\partial_\eta^2 \bar{T}_1 + \langle \bar{\psi}_1 - 4(P_0+\bar{T}_0)^3 (P_1+\bar{T}_1) + 4 P_0^3 P_1 \rangle = \frac{1}{1-\eps\eta} \partial_\eta \bar{T}_0, \\
		\sin(\theta+\xi)\partial_\eta \bar{\psi}_1 + \bar{\psi}_1 - 4(P_0+\bar{T}_0)^3 (P_1+\bar{T}_1) + 4 P_0^3 P_1 = \frac{1}{1-\eps\eta}\cos(\theta+\xi) \partial_\theta \bar{\psi}_0, \\
	\end{array}\right.\\
	&\qquad \vdots \nonumber\\
	&\left\{\begin{array}{ll}
		\partial_\eta^2 \bar{T}_k + \langle \bar{\psi}_k - 4 (P_0+\bar{T}_0)^3 (P_k+\bar{T}_k) + 4 P_0^3 P_k \rangle = \frac{1}{1-\eps\eta}\partial_\eta \bar{T}_{k-1} - \frac{1}{(1-\eps\eta)^2} \partial_\theta \bar{T}_{k-2} \\
		\qquad + \langle \mathcal{E}(P+\bar{T},k-1) - \mathcal{E}(P,k-1) \rangle , \\
		\sin(\theta+\xi) \partial_\eta \bar{\psi}_k + \bar{\psi}_k - 4 (P_0+\bar{T}_0)^3 (P_k+\bar{T}_k) + 4 P_0^3 P_k = \frac{1}{1-\eps\eta}\cos(\theta+\xi) \partial_\theta \bar{\psi}_{k-1} \\
		\quad +  \mathcal{E}(P+\bar{T},k-1) - \mathcal{E}(P,k-1),
	\end{array}\right. \label{eq:bar/3}
\end{align}
with the boundary conditions 
\begin{align*}
	&\bar{T}_0(0,\theta) + T_0(0,\theta) = T_b(\theta),\\
	&\bar{\psi}_0(0,\theta,\xi) + \psi_0(0,\theta,\xi) = \psi_b(\theta,\xi),\quad \text{for }\sin(\theta+\xi) >0,
\end{align*}
and for $k\ge 1$,
\begin{align*}
	&\bar{T}_k(0,\theta) + T_k(0,\theta) =0,\\
	&\bar{\psi}_k(0,\theta,\xi) + \psi_k(0,\theta,\xi) = 0,\quad \text{for }\sin(\theta+\xi) >0,
\end{align*}
However, the boundary condition $T_k(0,\theta)$, $\psi_k(0,\theta)$ are not known. Hence we introduce the variables 
\begin{align*}
	&\tilde{T}_k = \bar{T}_k + T_k(\eps\eta=0)= \bar{T}_k + P_k(\eta=0),\\
	&\tilde{\psi}_k = \bar{\psi}_k + 4 P_0^3P_k(\eta=0).
\end{align*}
We denote $P_k(0)=P_k(\eta=0)$, $Q_k(0)=Q_k(\eta=0)$. Equations \eqref{eq:bar/1}-\eqref{eq:bar/3} can be written as 
\begin{align} 
	&\left\{\begin{array}{ll}
		\partial_\eta^2 \tilde{T}_0 + \langle \tilde{\psi}_0 - \tilde{T}_0^4 \rangle =0, \\
		\sin(\theta+\xi) \partial_\eta \tilde{\psi}_0 + \tilde{\psi}_0 - \tilde{T}_0^4  = 0, 
	\end{array} \right. \label{eq:tilde/1} \\
	&\left\{\begin{array}[]{ll}
		\partial_\eta^2 \tilde{T}_1 + \tilde{\psi}_1 - 4 \tilde{T}_0^3 \tilde{T}_1 = \frac{1}{1-\eps\eta}\partial_\eta \bar{T}_0 + \langle 4(\tilde{T}_0^3-P_0^3)(P_1 - P_1(0)) \rangle, \\
		\sin(\theta+\xi) \partial_\eta \tilde{\psi}_1 + \tilde{\psi}_1 - 4 \tilde{T}_0^3 \tilde{T}_1 = \frac{1}{1-\eps\eta}\cos(\theta+\xi)\partial_\theta \bar{\psi}_0 +  4(\tilde{T}_0^3-P_0^3)(P_1 - P_1(0))
	\end{array}\right. \label{eq:tilde/2}\\
	&\qquad \vdots\nonumber\\
	&\left\{\begin{array}[]{ll}
		\partial_\eta^2 \tilde{T}_k + \tilde{\psi}_k - 4\tilde{T}_0^3 \tilde{T}_k = \frac{1}{1-\eps\eta}\partial_\eta \tilde{T}_{k-1} + \frac{1}{(1-\eps\eta)^2} \partial_\theta^2 \tilde{T}_{k-2} + \langle 4(\tilde{T}_0^3 - P_0^3)(P_k-P_k(0))  \rangle \\
		\quad + \langle \mathcal{E}(\bar{T}+P,k-1) -\mathcal{E}(P,k-1),\rangle \\
		\sin(\theta+\xi)\partial_\eta \tilde{\psi}_k + \tilde{\psi}_k - 4\tilde{T}_0^3 \tilde{T}_k = \frac{1}{1-\eps\eta}\cos(\theta+\xi)\partial_\theta \bar{\psi}_{k-1} + 4(\tilde{T}_0^3 - P_0^3)(P_k-P_k(0))\\
		\quad + \mathcal{E}(\bar{T}+P,k-1) -\mathcal{E}(P,k-1) .
	\end{array}\right. \label{eq:tilde/3}
\end{align}
The boundary conditions are 
\begin{align*}
	\tilde{T}_0(0,\theta) = T_b(\theta),\quad \tilde{\psi}_0(0,\theta,\xi) = \psi_b(\theta,\xi), \quad \text{for }\sin(\theta+\xi) >0,
\end{align*}
and 
\begin{align*}
	\tilde{T}_k(0,\theta)= 0,\quad \tilde{\psi}_k(0,\theta,\xi) = \beta\cdot \nabla\psi_{k-1}(0) - \mathcal{E}(T(0),k-1), \quad \text{for }\sin(\theta+\xi) >0.
\end{align*}
The reason why the boundary condition for $\tilde{\psi}_k$ is not zero is because we define $\tilde{\psi}_k = \bar{\psi}_k + 4P_0^3 P_k(0) = \bar{\psi}_k + 4P_0^3 T_k(0)$ so that 
\begin{align*}
	\bar{\psi}_k(\eta=0)+ \psi_k(\eps\eta=0)& = \tilde{\psi}_k(0)-4P_0^3 T_k(0) + \psi_k(0)\nonumber
	\\
	& = \beta\cdot \nabla\psi_{k-1}(0) - \mathcal{E}(T(0),k-1)  -4P_0^3 T_k(0) + \psi_k(0) =0 ,
\end{align*}
due to the relation $\psi_k = -\beta\cdot \nabla \psi_{k-1} + 4T_0^3 T_k +\mathcal{E}(T,k-1)$. Here we call \eqref{eq:tilde/1}-\eqref{eq:tilde/3} the \emph{Milne problems} corresponding to \eqref{eq:1}-\eqref{eq:2} and \eqref{eq:bar/1}-\eqref{eq:bar/3} the \emph{boundary layer correction}.

\subsection{Construction of approximate solutions} 
Let $\delta >0$ be a small constant. Define the cut-off function
\begin{align*}
	\chi_0(r) = \left\{\begin{array}[]{ll}
		1, &\text{for } 0 \le r \le \tfrac12 \delta, \\
		0, &\text{for } r \ge \tfrac{3}{4}\delta, \\
		\in (0,1), & \text{otherwise}, 
	\end{array} \right.
\end{align*}
and 
\begin{align*}
	\chi(r) = \left\{\begin{array}[]{ll}
		1, &\text{for } 0 \le r \le \tfrac14 \delta, \\
		0, &\text{for } r \ge \tfrac{3}{8}\delta, \\
		\in (0,1), & \text{otherwise}.
	\end{array} \right.
\end{align*}
We begin by constructing the leading order approximations. The zeroth-order boundary layer correction $(\bar{T}_0,\bar{\psi}_0)$ is provided by solving 
\begin{align}\label{eq:u0}
	\left\{\begin{array}[]{ll}
	 \bar{T}_0(\eta,\theta) = \chi(\eps\eta)(\tilde{T}_0(\eta,\theta) - \tilde{T}_{0,\infty}(\infty,\theta)),~ \bar{\psi}_0(\eta,\theta,\xi) =\chi(\eps\eta)(\tilde{\psi}_0(\eta,\theta,\xi) - \tilde{\psi}_{0,\infty}(\infty,\theta,\xi)) \\
	\partial_\eta^2 \tilde{T}_0 + \langle \tilde{\psi}_0 - \tilde{T}_0^4 \rangle =0, \\
	\sin(\theta+\xi) \partial_\eta \tilde{\psi}_0 + \tilde{\psi}_0 - \tilde{T}_0^4 =0,
	\end{array}\right.
\end{align}
combined to the following boundary condition 
\[
\tilde{T}_0(0,\theta) = T_b(\theta),\quad \tilde{\psi}_0(0,\theta,\xi)=\psi_b(\theta,\xi),\text{ for }\sin(\theta+\xi)>0.
\]

Assuming sufficient regularity on the boundary data, one can prove as $\eta\to 0$, there exists a constant function $\tilde{T}_\infty =\tilde{T}_{0,\infty}(\theta)$ independent of $\eta$ such that $\lim_{\eta\to\infty} \tilde{T}_0(\eta)= \tilde{T}_{0,\infty}$ and $\lim_{\eta\to\infty} \tilde{\psi}_0(\eta)=\tilde{T}^4_{0,\infty}$. This provides the boundary condition for the interior expansions. The leading order of the interior expansion is 
\begin{equation}\label{eq:ap0_inside}
\begin{aligned}
	&\Delta T_0 + \pi \Delta T_0^4 = 0,\quad \text{in }\Omega\\
	&\psi = T_0^4 \qquad \text{in }\Omega\times\mathbb{S}^1 \\
	& T_0(x) = \tilde{T}_{0,\infty}, \quad \text{for }x\in\partial\Omega.
\end{aligned}
\end{equation}
We next construct the first order approximations. The boundary layer correction $(\bar{T}_1,\bar{\psi}_1)$ is given by solving 
\begin{align} \label{eq:u1}
	\left\{\begin{array}[]{ll}
		\bar{T}_1(\eta,\theta) = \chi(\eps\eta)(\tilde{T}_1(\eta,\theta) - \tilde{T}_{1,\infty}(\infty,\theta)),~ \bar{\psi}_1(\eta,\theta,\xi) =\chi(\eps\eta)(\tilde{\psi}_1(\eta,\theta,\xi) - \tilde{\psi}_{1,\infty}(\infty,\theta,\xi)) \\\\
		\partial_\eta^2 \tilde{T}_1 + \tilde{\psi}_1 - 4\tilde{T}_0^3 \tilde{T}_1 = \frac{\chi_0(\eps\eta)}{1-\eps\eta}\partial_\eta \tilde{T}_{0} \langle 4(\tilde{T}_0^3 - P_0^3)(P_1-P_1(0))  \rangle ,\\\\
		\sin(\theta+\xi)\partial_\eta \tilde{\psi}_1 + \tilde{\psi}_1 - 4\tilde{T}_0^3 \tilde{T}_1 = \frac{\chi_0(\eps\eta)}{1-\eps\eta}\cos(\theta+\xi)\partial_\theta \bar{\psi}_{0} + 4(\tilde{T}_0^3 - P_0^3)(P_1-P_1(0)) ,\\\\
		\text{B.C.: } 	\tilde{T}_1(0,\theta)= 0,\quad \tilde{\psi}_1(0,\theta,\xi) = \beta\cdot \nabla\psi_{0}(0) , \quad \text{for }\sin(\theta+\xi) >0.
	\end{array}\right.
\end{align}
The interior expansion $(T_0,\psi_0)$ is given by 
\begin{equation*}
\begin{aligned}
	&\Delta T_1 + \pi \Delta (4T_0^3 T_1) = 0, \quad \psi_1 = 4T_0^3 T_1 - \beta\cdot \nabla \psi_0,\quad \text{in }\Omega\times\mathbb{S}^1, \\
	&T_1(x) = \tilde{T}_{1,\infty}, \quad \text{for }x\in\partial\Omega.
\end{aligned}
\end{equation*}
The boundary layer corrections $(\bar{T}_k,\bar{\psi}_k)$, $k\ge 1$ are given by 
\begin{align*}
	\left\{\begin{array}[]{ll}
		\bar{T}_k(\eta,\theta) = \chi(\eps\eta)(\tilde{T}_k(\eta,\theta) - \tilde{T}_{k,\infty}(\infty,\theta)),~ \bar{\psi}_k(\eta,\theta,\xi) =\chi(\eps\eta)(\tilde{\psi}_k(\eta,\theta,\xi) - \tilde{\psi}_{k,\infty}(\infty,\theta,\xi)) \\\\
		\partial_\eta^2 \tilde{T}_k + \tilde{\psi}_k - 4\tilde{T}_0^3 \tilde{T}_k = \frac{\chi_0(\eps\eta)}{1-\eps\eta}\partial_\eta \tilde{T}_{k-1} + \frac{\chi_0(\eps\eta)}{(1-\eps\eta)^2} \partial_\theta^2 \tilde{T}_{k-2} + \langle 4(\tilde{T}_0^3 - P_0^3)(P_k-P_k(0))  \rangle \nonumber\\\\
		\qquad+ \langle \mathcal{E}(\bar{T}+P,k-1) -\mathcal{E}(P,k-1),\rangle \\\\
		\sin(\theta+\xi)\partial_\eta \tilde{\psi}_k + \tilde{\psi}_k - 4\tilde{T}_0^3 \tilde{T}_k = \frac{\chi_0(\eps\eta)}{1-\eps\eta}\cos(\theta+\xi)\partial_\theta \bar{\psi}_{k-1} + 4(\tilde{T}_0^3 - P_0^3)(P_k-P_k(0))\nonumber\\\\
		\qquad + \mathcal{E}(\bar{T}+P,k-1) -\mathcal{E}(P,k-1),\\\\
		\text{B.C.: } 	\tilde{T}_k(0,\theta)= 0,\quad \tilde{\psi}_k(0,\theta,\xi) = \beta\cdot \nabla\psi_{k-1}(0) - \mathcal{E}(T(0),k-1), \quad \text{for }\sin(\theta+\xi) >0.
	\end{array}\right.
\end{align*}
The interior expansions $(T_k,\psi_k)$, $k\ge 1$ are solved by 
\begin{align*}
	&\Delta T_k + \pi \Delta (4T_0^3 T_k) =-\pi \Delta \mathcal{E}(T,k-1) +  \langle (\beta\cdot\nabla)^3 \psi_{k-1} \rangle,\\
	&\psi_k = 4T_0^3 T_k + \mathcal{E}(T,k-1) - \beta\cdot \nabla \psi_{k-1}, \\
	&\text{B.C.: } T_k(x) = \tilde{T}_{k,\infty}, \quad \text{for }x\in\partial\Omega.
\end{align*}
It is expected that when $\eps$ is small, the addition of the boundary layer corrects well describes the boundary layer effects and we should have for example 
\begin{align*}
	\|T^\eps - T_0 - \bar{T}_0 \|_{L^\infty(\Omega)} = O(\eps),\quad \|\psi^\eps - \psi_0 - \bar{\psi}_0\|_{L^\infty(\Omega\times\mathbb{S}^1)} = O(\eps).
\end{align*}
For the flat boundary case, this has been proved in \cite{ghattassi2022convergence}. However, for the unit disk we consider here, a similar problem arises as the case of linear transport equation in \cite{wu2015geometric}. When solving \eqref{eq:u1}, the term $\cos(\theta+\xi) \partial_\theta \tilde{\psi}_0$ appears on the right of the equation of $\tilde{\psi}_1$. In order for $\tilde{\psi}_1 \in L^\infty(\mathbb{R}_+\times [-\pi,\pi)\times [-\pi,\pi))$, we need 
\begin{align}\label{eq:problematicterm}
	\frac{\chi_0(\eps\eta)}{1-\eps\eta} \cos(\theta+\xi) \partial_\theta \tilde{\psi}_0 \in L^\infty(\mathbb{R}_+\times [-\pi,\pi)\times [-\pi,\pi)).
\end{align}
Considering that the support of $\chi_0$ depends on $\eps$, we need $\partial_\theta \tilde{\psi}_0 \in L^\infty$. Note that $\partial_\theta \tilde{\psi}_0$ is a solution to the equations 
\begin{align*}
	&\partial_\eta^2 (\partial_\theta \tilde{T}_0) + \langle \partial_\theta \tilde{\psi}_0 - 4 \tilde{T}_0^3 \partial_\theta \tilde{T}_0 \rangle =0, \\
	&\sin(\theta+\xi) \partial_\eta (\partial_\theta \tilde{\psi}_0) + \partial_\theta \tilde{\psi}_0 - 4 \tilde{T}_0^3 \partial_\theta \tilde{T}_0 = - \cos(\theta+\xi) \partial_\eta \tilde{\psi}_0.
\end{align*}
Thus in order for $\partial_\theta \tilde{\psi}_0\in L^\infty$ we need $\partial_\eta \tilde{\psi}_0 \in L^\infty$. However, this fails to be hold, according to the following lemma.
\begin{lemma}
	For the Milne problem 
	\begin{align}\label{eq:ml/1}
		\left\{ \begin{array}{lll}
			&\partial_\eta^2 \tilde{T}_0 + \langle \tilde{\psi}_0 - \tilde{T}_0^4 \rangle =0,\\
			&\sin(\theta+\xi)\partial_\eta\tilde{\psi}_0 + \tilde{\psi}_0 - \tilde{T}_0^4 =0, \\
			&\mbox{B.C.:}~ \tilde{T}_0(0,\theta)=T_b(\theta),\quad \tilde{\psi}_0(0,\theta,\xi) = \psi_b(\theta,\xi),\quad \text{for }\sin(\theta+\xi) >0,
		\end{array}\right.
	\end{align}
	if $T_b=0$, $\psi_b(\theta,\xi)=\cos(2(\theta+\xi))$, then 
	\begin{align*}
		\partial_\eta \tilde{\psi}_0 \not\in L^\infty([0,\infty)\times[-\pi,\pi)\times[-\pi,\pi)).
	\end{align*}
\end{lemma}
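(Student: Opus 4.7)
My plan is to exploit the fact that the transport equation in \eqref{eq:ml/1} can be solved algebraically for $\partial_\eta\tilde{\psi}_0$, which introduces a factor of $1/\sin(\theta+\xi)$ that becomes singular on the degeneracy set $\sin(\theta+\xi)=0$. Under the prescribed boundary data, I would show that the numerator does not vanish fast enough to compensate, forcing $\partial_\eta\tilde{\psi}_0$ to blow up as one approaches the grazing set from the inflow side.

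The first step is to rewrite the second equation in \eqref{eq:ml/1} in the form
\begin{equation*}
\partial_\eta \tilde{\psi}_0(\eta,\theta,\xi)=\frac{\tilde{T}_0^4(\eta,\theta)-\tilde{\psi}_0(\eta,\theta,\xi)}{\sin(\theta+\xi)}, \qquad \sin(\theta+\xi)\neq 0,
\end{equation*}
which is an identity that holds pointwise along each inflow characteristic by standard linear ODE regularity, since for fixed $(\theta,\xi)$ with $\sin(\theta+\xi)>0$ the equation becomes a first-order linear ODE in $\eta$ with $C^1$ solutions up to $\eta=0$. In particular, taking $\eta\to 0^+$ along such a characteristic and using the boundary conditions $\tilde{T}_0(0,\theta)=T_b(\theta)=0$ and $\tilde{\psi}_0(0,\theta,\xi)=\psi_b(\theta,\xi)=\cos(2(\theta+\xi))$ yields
\begin{equation*}
\partial_\eta \tilde{\psi}_0(0,\theta,\xi)=-\frac{\cos(2(\theta+\xi))}{\sin(\theta+\xi)} \qquad \text{whenever } \sin(\theta+\xi)>0.
\end{equation*}

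To conclude, I would choose a sequence $(\theta_n,\xi_n)$ with $\theta_n+\xi_n=1/n$, so that the inflow condition $\sin(\theta_n+\xi_n)>0$ is satisfied for all large $n$, while $\sin(\theta_n+\xi_n)\sim 1/n$ and $\cos(2(\theta_n+\xi_n))\to 1$. Then
\begin{equation*}
\bigl|\partial_\eta\tilde{\psi}_0(0,\theta_n,\xi_n)\bigr|\sim n \longrightarrow \infty,
\end{equation*}
which contradicts any bound $\|\partial_\eta\tilde{\psi}_0\|_{L^\infty}<\infty$. The main obstacle is purely a regularity issue: one must justify that $\partial_\eta\tilde{\psi}_0$ has a well-defined pointwise trace at $\eta=0$ along each inflow characteristic, so that the algebraic identity above can be evaluated on the boundary. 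This should be harmless, as it follows from viewing the Milne transport equation as a linear ODE in $\eta$ with a coefficient $\sin(\theta+\xi)$ bounded away from zero on each fixed inflow characteristic, together with the boundedness of $\tilde{T}_0$ and $\tilde{\psi}_0$ expected from the existence theory of the Milne problem in Section \ref{sec3}. No global estimate on $\partial_\eta\tilde{\psi}_0$ uniform in $(\theta,\xi)$ is needed; indeed, it is precisely the failure of such a uniform bound that is being demonstrated.
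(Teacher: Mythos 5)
Your proposal is correct and follows essentially the same route as the paper: both solve the transport equation algebraically for $\partial_\eta\tilde{\psi}_0$, evaluate at $\eta=0$ using the boundary data $T_b=0$, $\psi_b=\cos(2(\theta+\xi))$, and let $\theta+\xi\to 0^+$ from the inflow side so that the numerator tends to $1$ while the denominator $\sin(\theta+\xi)$ vanishes. Your added remark on justifying the pointwise trace along each inflow characteristic is a minor refinement the paper leaves implicit.
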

\begin{proof}
	For $\eta=0$, according to \eqref{eq:ml/1}
	\begin{align*}
		\partial_\eta \tilde{\psi}_0(0,\theta,\xi) = - \frac{\tilde{\psi}_0(0,
		\theta,\xi) - \tilde{T}_0^4(0,\theta)}{\sin(\theta+\xi)}.
	\end{align*}
	If we take $\theta+\xi=0$, then 
	\begin{align*}
		\lim_{\xi\to -\theta^+}	\partial_\eta \tilde{\psi}_0(0,\theta,\xi) = - \lim_{\xi\to -\theta^+} \frac{\tilde{\psi}_0(0,
		\theta,\xi) - \tilde{T}_0^4(0,\theta)}{\sin(\theta+\xi)} = - \frac{1-0}{0} = -\infty,
	\end{align*}
	hence $\partial_\eta \tilde{\psi}_0 \not\in L^\infty([0,\infty)\times [-\pi,\pi)\times [-\pi,\pi))$. 
\end{proof}



\subsection{Geometric corrections} 
As can be seen from above analysis, the term \eqref{eq:problematicterm} does not lie in $L^\infty$. In order to overcome this difficulty, we follow the work \cite{wu2015geometric} and introduce a geometric correction to the boundary layer corrections. First we introduce the transformation of variables 
$(\theta,\xi) \to (\theta,\phi)$ defined by 
\begin{align*}
	\left\{\begin{array}[]{ll}
		\theta=\theta,\\
		\phi = \theta+\xi.
	\end{array}\right.
\end{align*}
With the new coordinate,  $\partial_\theta f(\theta,\xi)$ becomes 
\begin{align*}
	\partial_\theta f(\theta,\xi) = \partial_\theta  f(\theta,\phi) + \partial_\phi f(\theta,\phi), 
\end{align*}
and $\partial_{\theta}^2 f$ becomes
\begin{align*}
	\partial_\theta^2 f = \partial_\theta^2 f(\theta,\phi) + \partial_\phi^2 f(\theta,\phi) + 2 \partial_{\theta\phi} f(\theta,\phi).
\end{align*}
Note that any function $h=h(\theta)$ remains the same in the new coordinate. 

Equations \eqref{eq:i/1}-\eqref{eq:i/2} become 
\begin{align*}
	&\partial_\eta^2 T^i - \frac{\eps}{1-\eps\eta}\partial_\eta T^i + \frac{\eps^2}{(1-\eps\eta)^2} \partial_\theta^2 T^i  +  \langle \psi^i - (T^o + T^i)^4 + (T^o)^4 \rangle = 0,\\\\
	&\sin\phi \partial_\eta \psi^i - \frac{\eps}{1-\eps\eta} \cos\phi (\partial_\theta \psi^i + \partial_\phi \psi^i) + \psi^i - (T^o + T^i)^4 + (T^o)^4 =0.
\end{align*}
Taking the ansatz \eqref{eq:ansatz2} into the above equations and collect the terms with the same order while treating the terms $\eps^2 \partial_\phi^2 \bar{T}_k$, $\eps \partial_\phi \bar{\psi}_{k}$ to be $O(1)$ gives 
\begin{equation}\label{Eqcorc0}
\begin{aligned}
	&\left\{\begin{array}[]{ll}
		\partial_\eta^2 \bar{T}_0 - \frac{\eps}{1-\eps\eta}\partial_\eta \bar{T}_0+ \bar{\psi}_0 - (\bar{T}_0+P_0)^4 + P_0^4 \rangle  = 0,\\
		\sin\phi \partial_\eta \bar{\psi}_0 - \frac{\eps}{1-\eps\eta} \cos\phi \partial_\phi \bar{\psi}_0 + \bar{\psi}_0 - (\bar{T}_0+P_0)^4 + P_0^4 = 0,
	\end{array}\right.\\\\
	&\left\{\begin{array}[]{ll}
		\partial_\eta^2 \bar{T}_1 - \frac{\eps}{1-\eps\eta} \partial_\eta \bar{T}_1 + \langle \bar{\psi}_1 - 4(P_0+\bar{T}_0)^3 (P_1+\bar{T}_1) + 4 P_0^3 P_1 \rangle = 0,\\ 
		\\
		\sin \phi \partial_\eta \bar{\psi}_1 - \frac{\eps}{1-\eps\eta} \cos\phi \partial_\phi \bar{\psi}_1 + \bar{\psi}_1 - 4(P_0+\bar{T}_0)^3 (P_1+\bar{T}_1) + 4 P_0^3 P_1 = \frac{1}{1-\eps\eta}\cos(\theta+\xi) \partial_\theta \bar{\psi}_0, \\
	\end{array}\right.\\
	&\qquad \vdots \\\\
	&\left\{\begin{array}{ll}
		\partial_\eta^2 \bar{T}_k - \frac{\eps}{1-\eps\eta}\partial_\eta \bar{T}_k + \langle \bar{\psi}_k - 4 (P_0+\bar{T}_0)^3 (P_k+\bar{T}_k) + 4 P_0^3 P_k \rangle = 
		- \frac{1}{(1-\eps\eta)^2} \partial_\theta \bar{T}_{k-2} \\\\
		\qquad + \langle \mathcal{E}(P+\bar{T},k-1) - \mathcal{E}(P,k-1) \rangle, \\\\
		\sin \phi \partial_\eta \bar{\psi}_k - \frac{\eps}{1-\eps\eta} \cos\phi \partial_\phi \bar{\psi}_k + \bar{\psi}_k - 4 (P_0+\bar{T}_0)^3 (P_k+\bar{T}_k) + 4 P_0^3 P_k = \frac{1}{1-\eps\eta}\cos(\theta+\xi) \partial_\theta \bar{\psi}_{k-1} \\\\
		\qquad +  \mathcal{E}(P+\bar{T},k-1) - \mathcal{E}(P,k-1),
	\end{array}\right. 
\end{aligned}
\end{equation}
In the above, the terms 
\begin{align*}
	-\frac{\eps}{1-\eps\eta}\partial_\eta \bar{T}_k,\quad -\frac{\eps}{1-\eps\eta}\cos\phi \partial_\phi \bar{\psi}_k
\end{align*}
are included in the boundary layer problem, which is used to overcome the singularity of the normal derivatives of boundary layer solutions. The second term was introduced in \cite{wu2015geometric}. Here since we need to consider the elliptic equation, the first term should also be introduced.

Following a similar procedure, we can reformulate the equations using $\tilde{T}_k,\tilde{\psi}_k$. Define $F(\eps;\eta)$ by 
\begin{align}\label{eq:Fdef}
	F(\eps;\eta)=-\frac{\eps \chi_0(\eps\eta)}{1-\eps\eta}.
\end{align} 
The approximate solutions can be constructed as follows. 

The zeroth-order boundary layer correction $\bar{T}_0^\eps,\bar{\psi}_0^\eps$ with geometric correction is given by 
\begin{align}\label{eq:g0}
	\left\{\begin{array}[]{ll}
	 \bar{T}^\eps_0(\eta,\theta) = \chi(\eps\eta)(\tilde{T}^\eps_0(\eta,\theta) - \tilde{T}_{0,\infty}(\infty,\theta)),~ \bar{\psi}^\eps_0(\eta,\theta,\xi) =\chi(\eps\eta)(\tilde{\psi}^\eps_0(\eta,\theta,\xi) - \tilde{\psi}^\eps_{0,\infty}(\infty,\theta,\xi)) \\\\
	\partial_\eta^2 \tilde{T}^\eps_0 + F(\eps;\eta) \partial_\eta \tilde{T}^\eps_0 + \langle \tilde{\psi}_0^\eps - (\tilde{T}^\eps_0)^4 \rangle =0, \\\\
	\sin\phi \partial_\eta \tilde{\psi}_0^\eps + F(\eps;\eta) \cos\phi \partial_\phi \tilde{\psi}_0^\eps + \tilde{\psi}_0^\eps - (\tilde{T}_0^\eps)^4 =0,\\\\
	\text{B.C.: } \tilde{T}^\eps_0(0,\theta) = T_b(\theta),\quad \tilde{\psi}_0^\eps(0,\theta,\phi)=\psi_b(\theta,\phi),\text{ for }\sin \phi>0.
	\end{array}\right.
\end{align}

The first order boundary layer correction $\bar{T}_1^\eps,\bar{\psi}_1^\eps$ with geometric correction is given by 
\begin{align} \label{eq:g1}
	\left\{\begin{array}[]{ll}
		\bar{T}_1^\eps(\eta,\theta) = \chi(\eps\eta)(\tilde{T}_1^\eps(\eta,\theta) - \tilde{T}_{1,\infty}(\infty,\theta)),~ \bar{\psi}_1^\eps(\eta,\theta,\xi) =\chi(\eps\eta)(\tilde{\psi}_1^\eps(\eta,\theta,\xi) - \tilde{\psi}^\eps_{1,\infty}(\infty,\theta,\xi)) \\\\
		\partial_\eta^2 \tilde{T}_1^\eps + F(\eps;\eta) \partial_\eta \tilde{T}_1^\eps + \tilde{\psi}_1^\eps - 4(\tilde{T}^\eps_0)^3 \tilde{T}_1^\eps =   \langle 4((\tilde{T}^\eps_0)^3 - (P_0^\eps)^3)(P_1^\eps-P_1^\eps(0))  \rangle ,\\\\
		\sin\phi \partial_\eta \tilde{\psi}_1^\eps + F(\eps;\eta)\cos\phi\partial_\phi \tilde{\psi}_1^\eps + \tilde{\psi}_1^\eps - 4(\tilde{T}_0^\eps)^3 \tilde{T}_1^\eps = \frac{\chi_0(\eps\eta)}{1-\eps\eta}\cos\phi \partial_\theta \bar{\psi}_{0}^\eps \\\\
		\qquad  + 4((\tilde{T}^\eps_0)^3 - (P_0^\eps)^3)(P_1^\eps-P_1^\eps(0))  ,\\\\
		\text{B.C.: } 	\tilde{T}^\eps_1(0,\theta)= 0,\quad \tilde{\psi}_1^\eps (0,\theta,\phi) = \beta\cdot \nabla\psi^\eps_{0}(0) , \quad \text{for }\sin \phi >0.
	\end{array}\right.
\end{align}
The boundary layer corrections with geometric corrections $(\bar{T}^\eps_k,\bar{\psi}_k^\eps)$, $k\ge 1$ are given by 
\begin{equation}
\begin{aligned}\label{eq:g2}
	\left\{\begin{array}[]{ll}
		\bar{T}_k^\eps(\eta,\theta) = \chi(\eps\eta)(\tilde{T}_k^\eps(\eta,\theta) - \tilde{T}^\eps_{k,\infty}(\infty,\theta)),~ \bar{\psi}_k^\eps(\eta,\theta,\xi) =\chi(\eps\eta)(\tilde{\psi}_k^\eps(\eta,\theta,\xi) - \tilde{\psi}^\eps_{k,\infty}(\infty,\theta,\xi)) \\\\
		\partial_\eta^2 \tilde{T}^\eps_k + F(\eps;\eta)\partial_\eta \tilde{T}^\eps_k + \tilde{\psi}_k^\eps - 4(\tilde{T}_0^\eps)^3 \tilde{T}_k^\eps = + \frac{\chi_0(\eps\eta)}{(1-\eps\eta)^2} \partial_\theta^2 \tilde{T}_{k-2}^\eps + \langle 4((\tilde{T}^\eps_0)^3 - (P_0^\eps)^3)(P^\eps_k-P^\eps_k(0))  \rangle \\\\
		\qquad + \langle \mathcal{E}(\bar{T}^\eps+P^\eps,k-1) -\mathcal{E}(P^\eps,k-1)\rangle \\\\
		\sin\phi\partial_\eta \tilde{\psi}_k^\eps + F(\eps;\eta) \cos\phi \partial_\phi \tilde{\psi}^\eps_k+ \tilde{\psi}_k^\eps - 4(\tilde{T}_0^\eps)^3 \tilde{T}_k^\eps = \frac{\chi_0(\eps\eta)}{1-\eps\eta}\cos\phi\partial_\theta \bar{\psi}_{k-1} \\\\
		\qquad + 4((\tilde{T}^\eps_0)^3 - (P_0^\eps)^3)(P^\eps_k-P^\eps_k(0))+ \mathcal{E}(\bar{T}^\eps+P^\eps,k-1) -\mathcal{E}(P^\eps,k-1),\\\\
		\text{B.C.: } 	\tilde{T}^\eps_k(0,\theta)= 0,\quad \tilde{\psi}_k^\eps(0,\theta,\phi) = \beta\cdot \nabla\psi_{k-1}(0) - \mathcal{E}(T(0),k-1), \quad \text{for }\sin \phi >0.
	\end{array}\right.
\end{aligned}
\end{equation}

We will show later that
\[
\lim_{\eta\to\infty} \tilde{T}_k^\eps(\eta,\theta) = \tilde{T}_{k,\infty}^\eps, \quad \text{and}\,\,\,\lim_{\eta\to\infty} \tilde{\psi}_k^\eps(\eta,\theta,\phi) = \tilde{\psi}^\eps_{k,\infty}\quad \text{exist.}
\]
The leading order interior expansion $({T}_0^\eps,{\psi}_0^\eps)$ is given by 
\begin{align} \label{eq:interior/1}
	\left\{\begin{array}{lll}
		\Delta T^\eps_0 + \pi \Delta (T^\eps_0)^4 = 0,\quad \text{in }\Omega,\\
		\psi_0^\eps = (T_0^\eps)^4 \quad \text{in }\Omega\times\mathbb{S}^1\\
		\text{B.C.: } T_0^\eps(x) = \tilde{T}^\eps_{0,\infty}, \quad \text{for }x\in\partial\Omega.
	\end{array}\right.
\end{align}
The first order interior expansion $(T_1^\eps,\psi_1^\eps)$ is given by 
\begin{align}\label{eq:interior/2}
	\left\{\begin{array}{lll}
	\Delta T_1^\eps + \pi \Delta (4(T_0^\eps)^3 T_1^\eps) = 0,\quad \text{in }\Omega\times\mathbb{S}^1,\\
	\psi_1^\eps= 4 (T_0^\eps)^3 T_1^\eps - \beta\cdot \nabla \psi_0^\eps,\quad \text{in }\Omega\times\mathbb{S}^1,\\
	\text{B.C.: } T_1^\eps(x)=\tilde{T}^\eps_{1,\infty},\quad \text{for }x\in\partial\Omega.
	\end{array}\right.
\end{align}
The higher order interior expansion $(T_k^\eps,\psi_k^\eps)$, $k\ge 1$ is given by 
\begin{align}\label{eq:interior/3}
	\left\{\begin{array}{lll}
		\Delta T_k^\eps + \pi\Delta (4(T_0^\eps)^4 T_k^\eps) =  \langle (\beta\cdot\nabla)^3 \psi_{k-1} \rangle - \pi \Delta \mathcal{E}(T^\eps,k-1),\\
		\psi_{k}^\eps = 4(T_0^\eps)^3 T_k^\eps +  \mathcal{E}(T^\eps,k-1) - \beta\cdot \nabla \psi_{k-1}^\eps,\\
		\text{B.C.: } T_k^\eps(x)=\tilde{T}^\eps_{k,\infty},\quad \text{for }x\in\partial\Omega.
	\end{array}\right.
\end{align}

Here we call the equations of $(\tilde{T}_k^\eps,\tilde{\psi}_k^\eps)$ the \emph{Milne problem with geometric corrections} and $(\bar{T}^\eps,\bar{\psi}^\eps)$ the \emph{boundary layer solution with geometric corrections}, and $(T_k^\eps,\psi_k^\eps)$ the interior solutions.
\subsection{Approximate solution with geometric corrections}
We thus obtain the approximate solution with geometric corrections by adding the interior solution and the boundary layer solution with geometric corrections as
\begin{align}\label{ApproEs}
	T^{a,\eps}= \sum_{k=0}^N \eps^k (T_k^\eps + \bar{T}_k^\eps),\quad \psi^{a,\eps} = \sum_{k=0}^N \eps^k (\psi_k^\eps + \bar{\psi}_k^\eps),
\end{align}
where $T_k,\psi_k$ are given by \eqref{eq:interior/1}-\eqref{eq:interior/2} and $\bar{T}_k^\eps,\bar{\psi}_k^\eps$ are given by \eqref{eq:g0}-\eqref{eq:g2}.

One can show that $(T^{a,\eps},\psi^{a,\eps})$ satisfies system \eqref{eq:1}-\eqref{eq:2} with boundary conditions \eqref{eq:1b}-\eqref{eq:2b} in the perturbative sense.
\begin{lemma}\label{lem:errs}
	Let $(T^{a,\eps},\psi^{a,\eps})$ be the pair of functions constructed above. Then formally, it satisfy
	\begin{align*}
		&\eps^2 \Delta T^{a,\eps} + \langle \psi^{a,\eps} - (T^{a,\eps})^4 \rangle = \mathcal{R}_1(T^{a,\eps},\psi^{a,\eps}), \\
		&\eps \beta\cdot\nabla \psi^{a,\eps} + \psi^{a,\eps} - (T^{a,\eps})^4 = \mathcal{R}_2(T^{a,\eps},\psi^{a,\eps}),
	\end{align*}
	with boundary conditions 
	\begin{align*}
		&T^{a,\eps}(x)=T_b(x),\quad \text{for any } x\in \partial\Omega, \\
		&\psi^{a,\eps}(x,\beta)=\psi_b(x,\beta),\quad \text{for any }(x,\beta) \in \Gamma_-,
	\end{align*}
	where $ \mathcal{R}_1(T^{a,\eps},\psi^{a,\eps}) = O(\eps^{N+1})$, $ \mathcal{R}_1(T^{a,\eps},\psi^{a,\eps})=O(\eps^{N+1})$. 
\end{lemma}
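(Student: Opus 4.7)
The plan is to verify Lemma~\ref{lem:errs} by direct substitution of $(T^{a,\eps}, \psi^{a,\eps})$ into the system \eqref{eq:1}-\eqref{eq:2} and tracking the residuals that survive after the cancellations built into the construction. First, I would check the boundary conditions. At the physical boundary $\eta = 0$ the cut-off satisfies $\chi(0) = 1$, so $\bar{T}_0^\eps(0,\theta) = T_b(\theta) - \tilde{T}_{0,\infty}$ while $T_0^\eps(0,\theta) = \tilde{T}_{0,\infty}$, summing to $T_b(\theta)$ exactly. For $k \ge 1$, $\bar{T}_k^\eps(0,\theta) + T_k^\eps(0,\theta) = -\tilde{T}_{k,\infty}^\eps + \tilde{T}_{k,\infty}^\eps = 0$, so $T^{a,\eps}$ matches $T_b$ on $\partial\Omega$ exactly. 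The analogous check for $\psi^{a,\eps}|_{\Gamma_-}$ uses the specific inflow data $\tilde{\psi}_k^\eps(0,\theta,\phi) = \beta\cdot\nabla \psi_{k-1}(0) - \mathcal{E}(T(0), k-1)$, which by design cancels against $\psi_k^\eps(0) = 4(T_0^\eps)^3 T_k^\eps + \mathcal{E}(T^\eps, k-1) - \beta\cdot\nabla \psi_{k-1}^\eps$ at the boundary, as already noted in the derivation preceding \eqref{eq:tilde/1}-\eqref{eq:tilde/3}.

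For the PDE residual I would decompose $T^{a,\eps} = T^{o,\eps} + T^{i,\eps}$ with $T^{o,\eps} = \sum_{k=0}^N \eps^k T_k^\eps$ the interior truncation and $T^{i,\eps} = \sum_{k=0}^N \eps^k \bar{T}_k^\eps$ the boundary layer truncation, and similarly for $\psi^{a,\eps}$. Substituting into \eqref{eq:1}-\eqref{eq:2} produces the nonlinearity $(T^{o,\eps} + T^{i,\eps})^4$, which I would split using the multinomial identity underlying $\mathcal{C}(T,k)$. The terms depending only on $T^{o,\eps}, \psi^{o,\eps}$ cancel identically through order $\eps^N$ by the interior recursion \eqref{eq:interior/1}-\eqref{eq:interior/3}; the leftover is the tail $\sum_{k > N}$, which is $O(\eps^{N+1})$ provided the interior data are sufficiently smooth.

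For the boundary layer contribution I would pass to the stretched coordinates $(\eta, \theta, \phi)$, insert the outer Taylor expansion $T^{o,\eps}(\eps\eta, \theta) = \sum_{k=0}^N \eps^k P_k^\eps(\eta, \theta) + O(\eps^{N+1})$ (and analogously for $\psi^{o,\eps}$), and compare with the geometric corrected Milne equations \eqref{eq:g0}-\eqref{eq:g2}. By construction these cancel orders $\eps^0, \ldots, \eps^N$ identically, leaving two sources of genuine residual: the Taylor remainder in the interior expansion, already $O(\eps^{N+1})$; and commutator terms from $\partial_\eta$, $\partial_\theta$ hitting the cut-off $\chi(\eps\eta)$. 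The latter are supported on $\eps\eta \in [\tfrac{1}{4}\delta, \tfrac{3}{8}\delta]$, i.e. $\eta \sim 1/\eps$, where the profiles $\tilde{T}_k^\eps - \tilde{T}_{k,\infty}^\eps$ and $\tilde{\psi}_k^\eps - \tilde{\psi}_{k,\infty}^\eps$ decay exponentially (a property to be established in Sections~\ref{sec3}-\ref{sec4}). Such commutator contributions are of order $e^{-c/\eps}$ and are absorbed into $O(\eps^{N+1})$.

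The main obstacle I anticipate is the bookkeeping of the nonlinear cross terms $(T^{o,\eps} + T^{i,\eps})^4 - (T^{o,\eps})^4$, since at each order one must verify that the outer Taylor polynomial $P_k^\eps$ combines with the boundary layer profile $\bar{T}_k^\eps$ in precisely the way captured by $\mathcal{E}(P^\eps + \bar{T}^\eps, k-1) - \mathcal{E}(P^\eps, k-1)$ in \eqref{eq:g2}, and that the $4((\tilde{T}_0^\eps)^3 - (P_0^\eps)^3)(P_k^\eps - P_k^\eps(0))$ source terms correctly reconcile the inner-outer matching at the leading nonlinear coupling. I would first run the computation explicitly at $N = 0$ and $N = 1$ to confirm the structural matching, then proceed by induction on $k$. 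Since Lemma~\ref{lem:errs} claims only a \emph{formal} identity, no convergence of the asymptotic series is needed, only termwise cancellation up to $\eps^N$ inclusive.
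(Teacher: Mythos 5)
Your plan coincides with the paper's proof: the paper carries out exactly this substitution, computing $\mathcal{R}_1,\mathcal{R}_2$ explicitly, isolating per-order residuals $E_k^0,E_k^1$ in which the Milne equations with geometric correction cancel the leading contributions, and then bounding the surviving cutoff-commutator and nonlinear mismatch terms by $Ce^{-\lambda\delta/(4\eps)}$ via the exponential decay of $\tilde{T}_k^\eps-\tilde{T}_{k,\infty}^\eps$ and $\tilde{\psi}_k^\eps-\tilde{\psi}_{k,\infty}^\eps$, with the remaining tails of order $\eps^{N+1}$. The one point to be careful about when executing the plan is your claim that the outer Taylor remainder is ``already $O(\eps^{N+1})$'': on the layer region $\eta\lesssim\delta/\eps$ the remainder $T_j^\eps(\eps\eta)-P_j^\eps(\eta)=O((\eps\eta)^{N+1-j})$ is only $O(\delta^{N+1-j})$ by itself, and becomes $O(\eps^{N+1})$ only after being paired with the exponentially decaying boundary-layer factors it multiplies, which is precisely how the paper treats the terms $\mathcal{C}(T^\eps+\bar{T}^\eps,k)-\mathcal{C}(T^\eps,k)-\mathcal{C}(\bar{T}^\eps+P^\eps,k)+\mathcal{C}(P^\eps,k)$.
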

The lemma will be proved later in section 5 after we show the property of the Milne problems with geometric corrections, which is needed to obtain the error estimates.

\section{Nonlinear Milne problem with geometric correction}\label{sec3}
One clearly notes that the leading order of the Milne problem with geometric corrections \eqref{eq:g0} is nonlinear and the higher orders problem \eqref{eq:g1}, \eqref{eq:g2} are linear, we will consider these two problems separately. 

In this section, we first study the nonlinear Milne problem \eqref{eq:g0}. We rewrite the system as follows
\begin{align}
	&\partial_\eta^2 G^\eps + F(\eps;\eta) \partial_\eta G^\eps + \langle \varphi^\eps - (G^\eps)^4 \rangle = 0,\label{eq:n1}\\
	&\sin\phi\partial_\eta \varphi^\eps + F(\eps;\eta) \cos\phi \partial_\phi \varphi^\eps + \varphi^\eps - (G^\eps)^4 = 0,\label{eq:n2}
\end{align}
with boundary conditions 
\begin{align}
	&G^\eps(0) = G_b,\label{eq:n1b}\\
	&\varphi^\eps(0,\phi) = \varphi_b(\phi),\quad \text{for }\sin\phi >0.\label{eq:n2b}
\end{align}
Note that here we drop the dependence of variables on $\theta$ since it is irrelevant here. This section is devoted to the proof of  the existence for the above nonlinear system \eqref{eq:n1}-\eqref{eq:n2b}.
\begin{theorem}\label{thm.nm}
	Given $0\le G_b \le \gamma$, $0\le \varphi_b(\phi) \le \gamma^4$ for $\sin\phi>0$. Then there exists a weak solution $(\varphi^\eps,G^\eps)\in L^2_{\rm loc}([0,\infty))\times L^2_{\rm loc}([0,\infty)\times [-\pi,\pi))$ to system \eqref{eq:n1}-\eqref{eq:n2} with boundary conditions \eqref{eq:n1b}-\eqref{eq:n2b}. Moreover, the solutions satisfy the following estimates
	\begin{align}\label{eq:est/n1}
		&\int_0^\infty 4(G^\eps)^3 |\partial_\eta {G^\eps}|^2 d\eta + \frac12 \int_0^\infty \int_{-\pi}^\pi(\varphi^\eps-(G^\eps)^4)^2 d\phi d\eta  + \frac12 \int_{\sin\phi<0} |\sin\phi| (\varphi^\eps(0,\phi)-G_b^4)^2  d\phi \nonumber \\
		&\quad \le \frac12 \int_{\sin\phi>0} \sin\phi (\varphi_b(\phi)-G_b^4)^2  d\phi,	
	\end{align}
	and for any $0<\alpha<1/4$,
	\begin{align} \label{eq:est/n2}
		&\int_0^\infty e^{2\alpha\eta} 4(G^\eps)^3 |\partial_\eta G^\eps|^2 d\eta + \frac14 \int_0^\infty \int_{-\pi}^\pi e^{2\alpha\eta} (\varphi^\eps-(G^\eps)^4)^2 d\phi d\eta + \frac12 \int_{\sin\phi<0} |\sin\phi| (\varphi^\eps_B(0,\phi)-G^4_b)^2  d\phi \nonumber\\
		&\qquad = \frac12 \int_{\sin\phi>0} \sin\phi (\varphi_b - G_b^4)^2 d\phi.	
	\end{align}
	Furthermore, the limit $G^\eps_\infty=\lim_{\eta\to\infty} G^\eps(\eta)$ exists and 
	\begin{align}\label{eq:nmdecay}
		|G^\eps(\eta) - G^\eps_\infty| \le C e^{-\alpha \eta},
	\end{align}
	where $C>0$ is a constant only depending on $G_b$ and $\varphi_b$.
\end{theorem}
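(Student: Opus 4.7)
The plan is to construct $(G^\eps,\varphi^\eps)$ on a truncated interval $[0,L]$ with artificial outflow data at $\eta=L$, derive $L$-independent bounds, and pass to $L\to\infty$. On the truncated problem I would run a Banach fixed-point iteration: given $G$ nonnegative and bounded, solve the linear transport equation with source $G^4$ by the method of characteristics on $[0,L]\times[-\pi,\pi]$, then plug the resulting $\varphi$ into the linear two-point elliptic problem $\partial_\eta^2 G_{\rm new}+F(\eps;\eta)\partial_\eta G_{\rm new}=-\langle\varphi-G_{\rm new}^4\rangle$ with Dirichlet data $G_b$ at $0$ and a suitable artificial condition at $L$. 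The hypothesis $0\le G_b\le\gamma$, $0\le\varphi_b\le\gamma^4$ together with the maximum principle propagates the bounds $0\le G\le\gamma$, $0\le\varphi\le\gamma^4$ through the iteration, keeping the quartic nonlinearity Lipschitz and the map contractive for fixed $L$.

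The heart of the proof is the energy identity \eqref{eq:est/n1}. Setting $\psi^\ast:=\varphi^\eps-(G^\eps)^4$, I would test \eqref{eq:n2} against $\psi^\ast$, integrate over $[0,L]\times[-\pi,\pi]$, and use the splitting $\sin\phi\,\partial_\eta\varphi^\eps=\sin\phi\,\partial_\eta\psi^\ast+4\sin\phi(G^\eps)^3\partial_\eta G^\eps$. Integration by parts in $\eta$ produces the boundary trace $\tfrac12\int\sin\phi(\psi^\ast)^2 d\phi$ evaluated at $\eta=0$ and $\eta=L$, and integration by parts in $\phi$ converts the geometric-correction term into $\tfrac{F}{2}\langle\sin\phi(\psi^\ast)^2\rangle$ by periodicity. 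The resulting cross term $\int 4(G^\eps)^3\partial_\eta G^\eps\,\langle\sin\phi\,\varphi^\eps\rangle d\eta$ must be identified with $\int 4(G^\eps)^3|\partial_\eta G^\eps|^2 d\eta$: integrating \eqref{eq:n2} in $\phi$ and comparing with \eqref{eq:n1} shows that $H:=\partial_\eta G^\eps-\langle\sin\phi\,\varphi^\eps\rangle$ solves the ODE $H'+F(\eps;\cdot)H=0$, and the decay at $\eta=L$ (or at infinity, after the limit) forces $H\equiv 0$. Splitting the $\eta=0$ boundary term into its $\sin\phi>0$ (inflow, known from $\varphi_b$) and $\sin\phi<0$ (outflow trace) parts, and discarding the nonnegative $\eta=L$ piece, yields \eqref{eq:est/n1}; the $\tfrac{F}{2}\langle\sin\phi(\psi^\ast)^2\rangle$ contribution is bounded by $C\eps\int(\psi^\ast)^2$ and absorbed for $\eps$ small.

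For the weighted estimate \eqref{eq:est/n2}, I would repeat the computation with multiplier $e^{2\alpha\eta}\psi^\ast$; the only new term is $-\int 2\alpha\,e^{2\alpha\eta}\sin\phi(\psi^\ast)^2\,d\phi\,d\eta$, which, since $|\sin\phi|\le 1$, is absorbed into half of $\int e^{2\alpha\eta}(\psi^\ast)^2$ provided $2\alpha<\tfrac12$, leaving the coefficient $\tfrac14$ claimed in the statement. Exponential decay \eqref{eq:nmdecay} then follows from $\partial_\eta G^\eps=\langle\sin\phi\,\varphi^\eps\rangle$ and Cauchy--Schwarz:
\begin{equation*}
\int_\eta^\infty|\partial_\eta G^\eps(s)|\,ds\le\Bigl(\int_0^\infty e^{2\alpha s}|\partial_\eta G^\eps|^2\,ds\Bigr)^{\!1/2}\Bigl(\int_\eta^\infty e^{-2\alpha s}\,ds\Bigr)^{\!1/2}\le C e^{-\alpha\eta},
\end{equation*}
which gives both the existence of $G^\eps_\infty$ and the exponential rate.

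The main obstacle is the non-variational coupling introduced by $F(\eps;\eta)$: in the flat case $F\equiv 0$ the relation $\partial_\eta G=\langle\sin\phi\,\varphi\rangle$ holds directly, but here the integrating factor for $H'+FH=0$ is $\exp\bigl(-\int_0^\eta F(\eps;s)\,ds\bigr)=\exp\bigl(\int_0^{\eps\eta}\chi_0(u)/(1-u)\,du\bigr)$, which remains bounded uniformly in $\eps$ only because $\chi_0$ is compactly supported in $[0,3\delta/4]\subset[0,1)$. Controlling this factor, together with the indefinite-sign term $\tfrac{F}{2}\langle\sin\phi(\psi^\ast)^2\rangle$ and ensuring that the fixed-point iteration preserves all a priori bounds independently of $L$, is the technical core; once these are handled, the passage $L\to\infty$ by weak compactness and lower semicontinuity of the $L^2$ norms is standard and produces the claimed weak solution on $[0,\infty)$.
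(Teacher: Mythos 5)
Your overall architecture (truncation to a finite interval, the energy identity built on the conservation law $\partial_\eta G^\eps=\langle\sin\phi\,\varphi^\eps\rangle$ obtained by integrating the transport equation in $\phi$ and comparing with the elliptic equation, the weighted multiplier $e^{2\alpha\eta}$, passage to the limit by weak compactness and lower semicontinuity, and the decay \eqref{eq:nmdecay} via Cauchy--Schwarz) coincides with the paper's, and those parts are sound. Testing the transport equation against $\psi^\ast=\varphi^\eps-(G^\eps)^4$ instead of multiplying \eqref{eq:n1} by $(G^\eps)^4$ and \eqref{eq:n2} by $\varphi^\eps$ is an equivalent bookkeeping once $H\equiv 0$ is in hand.

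The genuine gap is the existence step on the truncated interval: you assert that the map $G\mapsto\varphi[G]\mapsto G_{\rm new}$ is \emph{contractive} for fixed $L$, but nothing in the problem provides the required smallness. The transport solution operator has $L^\infty$ norm $1$, the Lipschitz constant of $G\mapsto G^4$ on $[0,\gamma]$ is $4\gamma^3$, and the solution operator of the two-point problem $-\partial_\eta^2 u-F\partial_\eta u=f$ on $[0,L]$ has a Green's function whose norm grows like $L^2$; the composite Lipschitz constant is therefore of order $\gamma^3 L^2$ and exceeds $1$ except for very small $\gamma$ and $L$. The paper circumvents this entirely by a \emph{monotone} iteration: starting from the sub-solution $(0,0)$, the comparison principles for the elliptic equation (Lemma \ref{lem.B1}, proved by Leray--Schauder plus the maximum principle for the scalar problem only) and for the transport equation (Lemma \ref{lem.B2}, read off from the explicit characteristic formulas) show the iterates form increasing sequences bounded by $(\gamma,\gamma^4)$, which converge by Beppo Levi and Dini; no contraction is needed. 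You should replace the Banach fixed-point claim by this monotonicity argument, or supply a compactness-based fixed point. A secondary point: the identity $H\equiv 0$, with $H=\partial_\eta G^\eps-\langle\sin\phi\,\varphi^\eps\rangle$ solving $H'+FH=0$, requires $H(L)=0$, which holds precisely for the Neumann condition $\partial_\eta G(L)=0$ coupled with the specular reflection $\varphi(L,\phi)=\varphi(L,-\phi)$ as in \eqref{eq:B1b}--\eqref{eq:B2b}; a generic ``artificial outflow datum'' at $\eta=L$ would destroy this cancellation and with it the identification of the cross term with $\int 4(G^\eps)^3|\partial_\eta G^\eps|^2\,d\eta$.
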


The proof of the above theorem follows the similar procedure as the case for flat boundary without geometric corrections in \cite{Bounadrylayer2019GHM2}. We first establish the existence in a bounded domain and then use uniform weighted estimates to carry out the solutions to half-space. However, due to the additional terms by geometric corrections, we need to use properties of the function $F(\eps;\eta)$ defined by \eqref{eq:Fdef} to obtain the existence proofs.

The following property of $F(\eps;\eta)$ was proved in \cite[Lemma 4.1]{wu2015geometric}. 
\begin{lemma}[\cite{wu2015geometric}]
	There exists a potential function $V(\eps;\eta)$ satisfying $V(\eps;0)=0$ and $F(\eps;\eta) = -\partial_\eta V(\eps;\eta)$. Moreover, $V(\eps;\eta)$ is a monotonically increasing function of $\eta$ satisfying 
	\begin{align*}
		0\le V(\eps;\eta) \le -\log(1-\tfrac34\delta),
	\end{align*}
	and that $V_\infty(\eps):=\lim_{\eta\to\infty} V(\eps;\eta)$ exists and is independent of $\eps$. For any $\sigma>0$, $\eta>0$, 
	\begin{align*}
		e^{V(\eta+\sigma)} - e^{V(\eta)} \le 1 + \frac{4}{4-3\delta} \eps \sigma.
	\end{align*}
	For $F(\eps;\eta)$, the following inequalities hold:
	\begin{align}
		&-\log (1-\tfrac34\delta) \le \int_y^z F(\eps;\eta)d\eta \le 0,\quad \text{for any }0\le y\le z\le \infty, \label{eq:Fin1}\\
		&\int_0^\infty\int_\eta^\infty |F(y)^2| dy d\eta \le \frac{3\delta}{4-3\delta} + \log(1-\tfrac34\delta),\label{eq:Fin2}\\
		&\int_0^\infty|F(\eps;\eta)|^2 d\eta \le \frac{3\delta}{4-3\delta}\eps, \label{eq:Fin3}\\
		&\|F(\eps;\eta)\|_{L^\infty(\mathbb{R}_+)} \le \frac{4}{4-3\delta}\eps. \label{eq:Fin4}
	\end{align}
	
\end{lemma}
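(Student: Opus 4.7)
The plan is to define $V(\eps;\eta) := -\int_0^\eta F(\eps;s)\,ds = \int_0^\eta \tfrac{\eps\chi_0(\eps s)}{1-\eps s}\,ds$, which tautologically gives $V(\eps;0)=0$ and $F=-\partial_\eta V$. The key observation is the change of variables $u=\eps s$: it reveals $V(\eps;\eta)=W(\eps\eta)$ with $W(t):=\int_0^t \chi_0(u)/(1-u)\,du$, so that $V$ depends on $(\eps,\eta)$ only through the product $\eps\eta$. Consequently $V_\infty(\eps)=W(\infty)=\int_0^{3\delta/4}\chi_0(u)/(1-u)\,du$ exists and is manifestly $\eps$-independent. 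Monotonicity comes from $\partial_\eta V=\eps\chi_0(\eps\eta)/(1-\eps\eta)\ge 0$, where positivity of the denominator uses that $\chi_0$ is supported in $\eps\eta\le 3\delta/4<1$, and the uniform upper bound $V\le -\log(1-\tfrac34\delta)$ drops out of $W(t)\le \int_0^{3\delta/4}du/(1-u)$.

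For the estimate on $e^{V(\eta+\sigma)}-e^{V(\eta)}$, I would write the difference as $\int_\eta^{\eta+\sigma}\partial_s V(s)\,e^{V(s)}\,ds$, bound $e^{V(s)}\le e^{V_\infty}\le \tfrac{4}{4-3\delta}$ using the previous step, and bound $\partial_s V(s)\le \eps$ pointwise. This yields $\tfrac{4}{4-3\delta}\eps\sigma$, which is stronger than the stated $1+\tfrac{4}{4-3\delta}\eps\sigma$; the additive ``$1$'' simply absorbs slack for a uniform statement.

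The four integral bounds on $F$ then follow from the same scaling. For \eqref{eq:Fin1}, the identity $\int_y^z F\,d\eta=V(y)-V(z)$ combined with monotonicity of $V$ and its uniform upper bound gives the two-sided estimate. For \eqref{eq:Fin3} and \eqref{eq:Fin4}, the substitution $u=\eps\eta$ converts $\int_0^\infty|F|^2\,d\eta$ into $\eps\int_0^\infty\chi_0(u)^2/(1-u)^2\,du\le \eps\int_0^{3\delta/4}du/(1-u)^2=\eps\cdot\tfrac{3\delta}{4-3\delta}$, while the pointwise bound $|F|\le \eps/(1-\tfrac34\delta)=\tfrac{4\eps}{4-3\delta}$ is immediate. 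For \eqref{eq:Fin2}, Fubini converts the double integral into $\int_0^\infty y\,|F(y)|^2\,dy$, and the same rescaling reduces this to $\int_0^{3\delta/4}u\,\chi_0(u)^2/(1-u)^2\,du$; the elementary antiderivative $\int u/(1-u)^2\,du=1/(1-u)+\log(1-u)$ evaluated on $[0,3\delta/4]$ yields precisely $\tfrac{3\delta}{4-3\delta}+\log(1-\tfrac34\delta)$.

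No substantive obstacle arises: the lemma is essentially bookkeeping organized around the scaling identity $V(\eps;\eta)=W(\eps\eta)$. The only mildly delicate point is the antiderivative used for \eqref{eq:Fin2}, which I would verify by direct differentiation. The conceptual takeaway is that all $\eps$-uniform bounds are packaged into $W$, while the factors of $\eps$ in the $F$-estimates come from the Jacobian of the rescaling $u=\eps\eta$ acting on whichever integration axis remains unscaled.
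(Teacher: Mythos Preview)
The paper does not prove this lemma; it is quoted directly from \cite{wu2015geometric} without argument. So there is no ``paper's proof'' to compare against, and your approach via the scaling identity $V(\eps;\eta)=W(\eps\eta)$ with $W(t)=\int_0^t \chi_0(u)/(1-u)\,du$ is the natural one and works cleanly for the monotonicity, the upper bound on $V$, and all four integral estimates \eqref{eq:Fin1}--\eqref{eq:Fin4}.

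There is one genuine slip. In the estimate for $e^{V(\eta+\sigma)}-e^{V(\eta)}$ you assert $\partial_s V(s)\le \eps$ pointwise, but in fact $\partial_s V(s)=\eps\,\chi_0(\eps s)/(1-\eps s)$ and the factor $1/(1-\eps s)$ exceeds $1$ on the support of $\chi_0$; the correct pointwise bound is $\partial_s V(s)\le \tfrac{4}{4-3\delta}\eps$. Combined with $e^{V(s)}\le \tfrac{4}{4-3\delta}$ this gives $\bigl(\tfrac{4}{4-3\delta}\bigr)^{2}\eps\sigma$, not $\tfrac{4}{4-3\delta}\eps\sigma$ as you claim. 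This does not endanger the stated inequality, since the left-hand side is in any case bounded by $e^{V_\infty}-1\le \tfrac{3\delta}{4-3\delta}$, which is below $1$ for $\delta$ small; the additive ``$1$'' in the lemma's bound then absorbs everything. But your sharper intermediate claim is false as written and should be corrected.

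A minor remark: the lower bound in \eqref{eq:Fin1} as printed has a sign inconsistency (it should read $\log(1-\tfrac34\delta)\le \int_y^z F\le 0$, since $F\le 0$); your derivation via $\int_y^z F=V(y)-V(z)$ and monotonicity of $V$ gives the correct signs.
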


\subsection{Existence on the bounded interval}
We first consider the nonlinear Milne problem on the bounded domain with $\eta\in [0,B]$, $\phi \in [-\pi,\pi)$.
\begin{align}
	&\partial_\eta^2 G^\eps_B + F(\eps;\eta) \partial_\eta G^\eps_B + \langle \varphi^\eps_B - (G^\eps_B)^4 \rangle =0, \label{eq:B1} \\
	&\sin\phi \partial_\eta \varphi^\eps_B + F(\eps;\eta) \cos\phi \partial_\phi \varphi^\eps_B + \varphi^\eps_B - (G^\eps_B)^4 =0,\label{eq:B2}
\end{align}
subject to the boundary conditons 
\begin{align}
	&G_B^\eps(0) = G_b,\quad \partial_\eta G^\eps_B(B) = 0, \label{eq:B1b}\\
	&\varphi^\eps_B(0,\phi) = \varphi_b(\phi), \quad \varphi^\eps_B(B,\phi)  = \varphi^\eps_B(B,-\phi),\quad \text{for }\sin\phi>0.\label{eq:B2b}
\end{align}

The existence of solutions to the above problem is guaranteed by the following lemma.
\begin{lemma}\label{lem.B}
	Assume $0\le G_b\le \gamma$ and $0\le \varphi_b \le \gamma^4$ for some constant $\gamma>0$. Then there exists a unique solution $(G_B^\eps,\varphi_B^\eps) \in C^2([0,B])\times C^1([0,B]\times [-\pi,\pi))$ to system \eqref{eq:B1}-\eqref{eq:B2} with boundary conditions \eqref{eq:B1b}-\eqref{eq:B2b}, and the solution satisfies 
	\begin{align*}
		0\le G_B^\eps(\eta)\le \gamma,\quad 0\le \varphi^\eps_B(\eta,\phi)\le \gamma^4,\quad \text{for }\eta\in [0,B],\, \phi\in [-\pi,\pi)
	\end{align*}
\end{lemma}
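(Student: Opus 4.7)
The plan is to construct the solution to \eqref{eq:B1}--\eqref{eq:B2b} by a Schauder fixed-point argument that decouples the nonlinear interaction between the elliptic equation for $G_B^\eps$ and the transport equation for $\varphi_B^\eps$, following the strategy of \cite{Bounadrylayer2019GHM2} adapted to the geometry-corrected equations. The uniform bound $\|F(\eps;\cdot)\|_{L^\infty}\le 4\eps/(4-3\delta)$ from \eqref{eq:Fin4} allows us to treat $F\partial_\eta$ and $F\cos\phi\,\partial_\phi$ as bounded lower-order perturbations throughout.

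Fix $g\in C([0,B])$ with $0\le g\le\gamma$. First I would solve the linear inflow--reflection transport problem
\[
\sin\phi\,\partial_\eta\varphi+F(\eps;\eta)\cos\phi\,\partial_\phi\varphi+\varphi=g^4,
\]
with inflow data $\varphi(0,\phi)=\varphi_b(\phi)$ for $\sin\phi>0$ and specular reflection $\varphi(B,\phi)=\varphi(B,-\phi)$ at $\eta=B$. The characteristics of this operator satisfy $d\eta/ds=\sin\phi$ and $d\phi/ds=F(\eps;\eta)\cos\phi$, and admit $\cos\phi\cdot e^{V(\eps;\eta)}$ as a conserved quantity, so each trajectory stays in a bounded region of $(\eta,\phi)$-space and, after a finite number of reflections off $\eta=B$, eventually exits through $\eta=0$. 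The method of characteristics then produces a unique mild solution $\varphi$, and pointwise comparison against the constants $0$ and $\gamma^4$ (themselves sub- and super-solutions of the transport equation under the hypotheses $0\le g\le\gamma$ and $0\le\varphi_b\le\gamma^4$) yields $0\le\varphi\le\gamma^4$.

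Next, given this $\varphi$, I would solve the semilinear two-point boundary value problem
\[
\partial_\eta^2 G+F(\eps;\eta)\partial_\eta G-2\pi G^4=-\langle\varphi\rangle,\quad G(0)=G_b,\quad \partial_\eta G(B)=0.
\]
Because $G\mapsto 2\pi G^4$ is monotone increasing, existence and uniqueness of $G\in C^2([0,B])$ follow from the sub/super-solution method: $\underline G\equiv 0$ is a subsolution since $\langle\varphi\rangle\ge 0$, and $\bar G\equiv\gamma$ is a supersolution since $\langle\varphi\rangle\le 2\pi\gamma^4$, so the unique solution $G$ lies in $[0,\gamma]$. This defines a map $\mathcal{T}\colon g\mapsto G$ on the closed convex set $K=\{g\in C([0,B])\colon 0\le g\le\gamma\}$. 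Standard $C^1$-estimates for the ODE combined with continuous dependence of $\varphi$ on $g$ yield compactness and continuity of $\mathcal T$ on $K$, so Schauder's fixed-point theorem produces a fixed point, which is the desired coupled solution. The regularity $C^2([0,B])\times C^1([0,B]\times[-\pi,\pi))$ follows by bootstrapping, and uniqueness comes from an energy identity obtained by testing the difference of any two solutions against itself: the specular reflection at $B$ and the Neumann condition kill the boundary fluxes, while monotonicity of $G\mapsto G^4$ turns the bulk contribution into a nonnegative quadratic form.

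The main obstacle I anticipate is the rigorous treatment of the curved transport characteristics combined with specular reflection at $\eta=B$: one must verify that every characteristic reaches $\eta=0$ after bouncing off $\eta=B$ at most finitely many times, so that the inflow data at $\eta=0$ fully determines $\varphi$ and the comparison principle genuinely applies. The invariance of $\cos\phi\,e^{V(\eps;\eta)}$ along characteristics, together with the uniform bound $V(\eps;\eta)\le -\log(1-\tfrac34\delta)$, controls the $\phi$-excursion and makes this tracking possible. Once the trajectory structure is in hand, continuity of $\mathcal{T}$ with respect to $L^\infty$-convergence of $g$ is essentially automatic from linearity of the transport step and continuous dependence of the ODE step on its right-hand side.
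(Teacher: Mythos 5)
Your existence argument is sound and is a genuine (if minor) variant of the paper's: you run Schauder on the composite map $g\mapsto\varphi\mapsto G$, whereas the paper builds the solution by a monotone iteration
$-\partial_\eta^2 G^k-F\partial_\eta G^k+\langle\zeta(G^k)\rangle=\langle\varphi^{k-1}\rangle$, $\sin\phi\,\partial_\eta\varphi^k+F\cos\phi\,\partial_\phi\varphi^k+\varphi^k=(G^{k-1})^4$, starting from the sub-solution $(0,0)$ and using the comparison principles for each scalar equation (their Lemmas \ref{lem.B1}--\ref{lem.B2}, which are essentially your two sub-steps) to get a bounded increasing sequence converging by Dini's theorem. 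Both routes deliver existence and the bounds $0\le G\le\gamma$, $0\le\varphi\le\gamma^4$; the paper's choice of monotone iteration is not cosmetic, though, because it is reused for uniqueness. (Two small slips: the conserved quantity along characteristics is $\cos\phi\,e^{-V(\eps;\eta)}$, not $\cos\phi\,e^{V(\eps;\eta)}$; and a characteristic with $\sin\phi<0$ either reflects once at $\eta=B$ or turns around at the grazing point $\eta_+$, so "finitely many reflections" is really "at most one".)

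The genuine gap is in your uniqueness argument. Testing the difference system against itself does not produce a nonnegative quadratic form. If you pair the elliptic equation with $-(G_1^4-G_2^4)$ and the transport equation with $\psi=\varphi_1-\varphi_2$ (the pairing that makes the zeroth-order terms combine into the perfect square $\int\langle(\psi-(G_1^4-G_2^4))^2\rangle\ge 0$), integration by parts in the elliptic part leaves the term
\begin{align*}
\int_0^B \partial_\eta\bigl(G_1^4-G_2^4\bigr)\,\partial_\eta(G_1-G_2)\,d\eta
=\int_0^B\bigl(G_1^3+G_1^2G_2+G_1G_2^2+G_2^3\bigr)\,|\partial_\eta(G_1-G_2)|^2\,d\eta
+\int_0^B (G_1-G_2)\,\partial_\eta(G_1-G_2)\,\partial_\eta\bigl(G_1^3+\cdots+G_2^3\bigr)\,d\eta,
\end{align*}
whose second summand is indefinite; controlling exactly this type of cross term is the content of the spectral assumption \ref{asA}, which is an extra hypothesis imposed only on the linearized Milne problem and is \emph{not} available in Lemma \ref{lem.B}. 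If instead you test the elliptic difference equation with $G_1-G_2$ and the transport with $\psi$, the good term $2\pi\int(G_1^4-G_2^4)(G_1-G_2)$ degenerates where $G$ is small (the lemma allows $G_b=0$), and the cross terms $\int\langle\psi\rangle(G_1-G_2)$ and $\int(G_1^4-G_2^4)\langle\psi\rangle$ cannot be absorbed without a $B$-independent control on $\|G_1-G_2\|_{L^2}$ that you do not have. The paper circumvents this entirely by exploiting monotonicity: given two solutions, it starts the monotone iteration from $(\max\{G_1,G_2\},\max\{\varphi_1,\varphi_2\})$ to manufacture a third solution $(G,\varphi)$ dominating both, then derives from comparing \eqref{eq:B1} with the angular average of \eqref{eq:B2} the flux identity $\partial_\eta(G-G_i)=\langle\sin\phi(\varphi-\varphi_i)\rangle$ on all of $[0,B]$, and finally obtains a sign contradiction at $\eta=0$ (the explicit characteristic formulas force $\langle\sin\phi(\varphi-\varphi_i)(0,\cdot)\rangle<0$ unless $G\equiv G_i$). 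You would need to replace your energy step with an argument of this kind, or supply an additional structural hypothesis, for the uniqueness claim to stand.
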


In order to show the Lemma \ref{lem.B}, we follow our previous work on the Milne problem \cite{Bounadrylayer2019GHM2}. The proof for the flat case in \cite{Bounadrylayer2019GHM2} relys on the monotonicity of solutions for the elliptic equation and transport equation. Here we show that the geometric correction does not alter the monotonicity of the solutions. We start by proving the monotonicity of the elliptic equation \eqref{eq:B1}.
\begin{lemma}\label{lem.B1}
	Suppose $0\le G_b\le \gamma$ for some constant $\gamma>0$. Suppose $\zeta=\zeta(x)$ is an increasing monotonic function defined on $x\in\mathbb{R}$ and $g \in C([0,B])$ is a continuous bounded function satisfying $0\le g \le \zeta(\gamma)$. Then there exists a unique bounded solution $G^\eps_B \in C^2([0,B])$ to the equation 
	\begin{align}\label{eq:GB}
		-\partial_\eta^2 G + F(\eps;\eta) \partial_\eta G + \zeta(G) = g(\eta),\quad \text{on }\eta\in [0,B],
	\end{align}
	with boundary conditions 
	\begin{align}\label{eq:GBb}
		G(0) = G_b, \quad \partial_\eta G(B)=0,
	\end{align}
	and the solution satisfies $0\le G(\eta)\le \gamma$ for any $\eta\in [0,B]$.

	Moreover, let $G_1$, $G_2$ be two solutions of the above equation corresponding to source terms $g_1$, $g_2$ and boundary data $G_{b1}$, $G_{b2}$, respectively. Suppose $0\le g_1 \le g_2 \le \gamma$ on $[0,B]$ and $0\le G_{b1} \le G_{b2} \le \gamma$, then $0\le G_1 \le G_2 \le \gamma$ on $[0,B]$.
\end{lemma}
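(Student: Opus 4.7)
The plan is to exploit the key algebraic identity $F(\eps;\eta)=-\partial_\eta V(\eps;\eta)$ established in the preceding lemma in order to put equation \eqref{eq:GB} into divergence form. A direct computation gives
\begin{equation*}
-\partial_\eta^2 G + F(\eps;\eta)\,\partial_\eta G \;=\; -e^{-V}\partial_\eta\bigl(e^V\partial_\eta G\bigr),
\end{equation*}
so \eqref{eq:GB} is equivalent to $-\partial_\eta(e^V\partial_\eta G) + e^V\zeta(G) = e^V g(\eta)$ on $[0,B]$, with the same boundary conditions \eqref{eq:GBb}. Because $V$ is bounded (between $0$ and $-\log(1-\tfrac34\delta)$), the weight $e^V$ is bounded above and below by positive constants depending only on $\delta$. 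Thus the geometric correction has been absorbed into a harmless weight, the operator is self-adjoint and uniformly elliptic with monotone nonlinearity, and the strategy from the flat case in \cite{Bounadrylayer2019GHM2} carries over.

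For existence and uniqueness I would use the direct method. Set $Z(s):=\int_0^s\zeta(\tau)\,d\tau$, which is convex since $\zeta$ is nondecreasing, and consider
\begin{equation*}
\mathcal{E}(G)\;=\;\int_0^B e^V\Bigl(\tfrac12|\partial_\eta G|^2 + Z(G) - g(\eta)G\Bigr)\,d\eta
\end{equation*}
on the affine space $\{G\in H^1(0,B):G(0)=G_b\}$. Coercivity follows from the lower bound on $e^V$ together with the fact that $Z$ grows at least linearly outside $[0,\gamma]$ since $\zeta(\gamma)\ge g$; strict convexity yields a unique minimizer, whose Euler--Lagrange equation is exactly the divergence form of \eqref{eq:GB} with natural Neumann condition $\partial_\eta G(B)=0$. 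Classical bootstrap gives $G\in C^2([0,B])$ since $\zeta$, $F$ and $g$ are continuous.

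For the pointwise bounds $0\le G\le \gamma$ I would apply the weak maximum principle to the divergence-form equation. Testing against $(G-\gamma)_+\in H^1(0,B)$—whose boundary contribution at $\eta=0$ vanishes because $G_b\le\gamma$ and at $\eta=B$ because $\partial_\eta G(B)=0$—and using $g\le\zeta(\gamma)$ together with monotonicity of $\zeta$ gives
\begin{equation*}
\int_0^B e^V\bigl|\partial_\eta (G-\gamma)_+\bigr|^2 d\eta + \int_{\{G>\gamma\}} e^V\bigl(\zeta(G)-\zeta(\gamma)\bigr)(G-\gamma)\,d\eta \;\le\; 0,
\end{equation*}
forcing $(G-\gamma)_+\equiv 0$. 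The lower bound is analogous: testing against $G^-=\max(-G,0)$ and using $g\ge 0$ together with $\zeta(G)\le\zeta(0)$ on $\{G<0\}$ (which is nonpositive in the intended application $\zeta(s)=s^4$) yields $G^-\equiv 0$.

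For the comparison statement I would subtract the divergence-form equations for $G_1$ and $G_2$ and test against $(G_1-G_2)_+$; boundary terms vanish by $G_{b1}\le G_{b2}$ and the Neumann condition at $\eta=B$, the nonlinear term is nonnegative by monotonicity of $\zeta$, and the source contribution is nonpositive by $g_1\le g_2$, hence $(G_1-G_2)_+\equiv 0$. Combining with the individual bounds gives $0\le G_1\le G_2\le \gamma$. The only real obstacle is the initial algebraic step of recognizing that $F$ is minus the derivative of a bounded potential, so that the lower-order drift admits a divergence-form primitive; once this is in hand, no genuinely new analytical difficulty arises compared to the flat case, and every step reduces to standard variational and maximum-principle arguments.
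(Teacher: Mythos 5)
Your proof is correct, but it takes a genuinely different route from the paper. The paper does not pass to divergence form at all: it solves the linear problem $-\partial_\eta^2 f - F\partial_\eta f = h$ by an explicit integral formula, sets up a compact map $\mathcal{N}G=\mathcal{G}(g-\zeta(G))+G_b$, and invokes the Leray--Schauder fixed point theorem, with the a priori bounds $0\le G\le\gamma$, uniqueness, and monotonicity all obtained by the classical pointwise maximum principle (at an interior extremum $\partial_\eta G=0$, so the drift term $F\partial_\eta G$ simply drops out and one reads off the sign of $\partial_\eta^2 G$). You instead observe that $F=-\partial_\eta V$ with $V$ bounded turns the operator into the uniformly elliptic self-adjoint form $-e^{-V}\partial_\eta(e^V\partial_\eta G)$, and then run the direct method on a strictly convex coercive functional, with the bounds and the comparison principle obtained by testing against truncations $(G-\gamma)_+$, $G^-$, $(G_1-G_2)_+$. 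Your approach buys existence and uniqueness in one stroke from strict convexity and makes the role of the bounded potential transparent; the paper's approach is more elementary pointwise and does not need the variational structure. Two caveats, neither fatal: your lower bound $G\ge 0$ needs $\zeta\le 0$ on $(-\infty,0]$ (you flag this; it holds for the extension $\zeta(G)=G/(1-G)$ actually used, and the paper's own interior-minimum argument has the same implicit requirement), and your $C^2$ bootstrap and Euler--Lagrange derivation tacitly use continuity of $\zeta$, which again holds in the application and is also used implicitly by the paper's fixed-point argument.
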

\begin{proof}
The proof of the lemma is based on the Leray-Schauder fixed point theorem and the maximum principle. 

\emph{Existence.} Let $h \in C([0,B])$ be the solution to the problem 
\begin{align*}
	-\partial_\eta^2 f - F(\eps;\eta) \partial_\eta f = h, \text{ in [0,B]}, \quad f(0)=0,\quad \partial_\eta f(B) = 0.
\end{align*}
Then the solution can be explicitly solved by 
\begin{align}\label{eq:f/formula}
	f(\eta) = \int_0^\eta e^{-\int_0^s F(\eps;p)dp} \left(\int_0^s - e^{\int_0^q F(\eps;p)dp} h(q) dq + \int_0^B e^{\int_0^q F(\eps;p) dp} h(q) dq \right) ds.
\end{align}
Due to \eqref{eq:Fin1}, $|\int_0^s F(\eps;p) dp| \le -\log(1-\tfrac34\delta)$ is bounded, so  $e^{\pm \int_0^s F(\eps;p) dp}$ is also bounded and the above formula is well-defined. Denote the above relation by $f=\mathcal{G} h$ with $\mathcal{G}$ being a linear operator mapping from $\mathcal{C}([0,B])$ to $C^2([0,B])$. We next show the nonlinear operator $\mathcal{N}$ defined by 
\begin{align*}
	\mathcal{N} G:=\mathcal{G}(g-\zeta(G)) + G_b,
\end{align*}
has a fixed point. First, the operator $\mathcal{N}: C([0,B])\times\mathbb{C}^2([0,B])$ is a compact mapping from the Banach space $(C[0,B])$ to $C([0,B])$, due to the compact embedding $C^2([0,B]) \subset C([0,B])$. Next we show the set 
\begin{align}\label{eq:setG}
	\{G \in C([0,B]) : G = \sigma \mathcal{N} G,\quad \text{for some }\sigma \in [0,1]\}
\end{align}
is bounded. To show this, suppose $G$ is in this set, then 
\begin{align*}
	&-\partial_\eta^2 G - F(\eps;\eta)\partial_\eta G + \sigma \zeta(G) = \sigma g,\quad \text{ in }[0,B]\\
	&G(0) = G_b, \quad \partial_\eta G(B)=0.
\end{align*}
First we show $G\ge 0$. Otherwise the minimum value of $G(\eta)$ on $[0,B]$ is small than $0$. Let $\eta_{\min}$ be the minimum point, then $G(\eta_{\min}) < 0$. If $\eta_{\min}=0$ then $G(\eta_{\min})=G_b \ge 0$. If $\eta_{\min} \in (0,B]$, then $\partial_\eta G(\eta_{\min})=0$ and 
\begin{align}
	-\partial_\eta^2 G(\eta_{\min}) =\sigma g(\eta_{\min}) - \sigma \zeta(G(\eta_{\min})) >0,
\end{align}
due to $\zeta$ being an increasing function and $G(\eta_{\min})<0$. Hence $G({\eta})$ is locally concave around $\eta_{\min}$, which contradicts to the assumption that $\eta_{\min}$ is the minimum point. Hence $G(\eta)\ge 0$ for any $\eta \in[0,B]$. 

We can also show $G \le \gamma$. Otherwise the maxium value of $G(\eta)$ is larger than $\gamma$. Let $\eta_{\max}$ be the maximum point, then $G(\eta_{\max}) > \gamma$. Since $G_b \le \gamma$, $\eta_{\max}\neq 0$. For $\eta_{\max} \in (0,B]$, we have $\partial_\eta G(\eta_{\max}) =0$ and 
\begin{align*}
	-\partial_\eta^2 G(\eta_{\max}) = \sigma g(\eta_{\max}) - \sigma \zeta(G(\eta_{\max})) \le  \sigma \zeta(\gamma) - \sigma \zeta(G(\eta_{\max}))  <0,
\end{align*}
due to $\zeta$ being an increasing function and $\zeta(G(\eta_{\max})) > \zeta(\gamma)$. This implies that $G(\eta)$ is locally strictly convex near $\eta_{\max}$, which contradicts the assumption that $\eta_{\max}$ is a maximum point. Hence $G\le \gamma$ holds. 

From $0\le G\le \gamma$ on $[0,B]$, we have $\|G\|_{C([0,B])} \le \gamma$ and the set \eqref{eq:setG} is bounded. Consequently, we can apply the Leray-Schauder fixed point theorem and conclude that $\mathcal{N}$ has a fixed point in $C([0,B])$. Let $G=G(\eta)\in C([0,B])$ be a fixed point, then $\mathcal{N} G = G$ and by the definition of $\mathcal{N}$, $G\in C^2([0,B])$ and solves equation \eqref{eq:GB} with boundary condition \eqref{eq:GBb}. By the same argument as above we also have $0\le G \le \gamma$ in $[0,B]$.

\emph{Uniqueness.} Note that the Leray-Schauder fixed point theorem only guarantees the existence for equation \eqref{eq:GB}. To show the uniqueness of solutions, we use the maximum principle. Suppose $G_1 \neq G_2$ be two solutions to \eqref{eq:GB}. Let $\ell:=G_1-G_2$, suppose $\ell(\eta)$ attains its maximum at $\eta_{\max}$ and its minimum at $\eta_{\min}$. First we show $\ell(\eta_{\max})\le 0$. Otherwise suppose $\ell(\eta_{\max})>0$, then $\eta_{\max}\neq 0$ since $G_{1}(0)=G_2(0)=G_b.$ For $\eta_{\max} \in (0,B]$, we have $\partial_\eta \ell(\eta_{\max})=0$, hence 
\begin{align*}
	-\partial_\eta^2 \ell (\eta_{\max}) = -  (\zeta(G_1(\eta_{\max})) - \zeta(G_2(\eta_{\max}))) <0,
\end{align*}
due to $G_1(\eta_{\max}) > G_2(\eta_{\max})$ and $\zeta$ being an increasing function. This implies $\ell$ is locally convex near $\eta_{\max}$, which contradicts the assumption that $\eta_{\max}$ is a maximum point. Therefore, $\ell(\eta)\le \ell(\eta_{\max})\le0$ for any $\eta \in [0,B]$. By a similar argument, one can show $\ell(\eta)\ge \ell(\eta_{\min}) \ge 0$ and we conclude that $\ell(\eta)=0$ for $\eta\in [0,B]$. Therefore, $G_1(\eta)=G_2(\eta)$ on $\eta \in [0,B]$ and the solution to \eqref{eq:GB} is unique.

\emph{Monotonicity.} Let $G_1$, $G_2$ be two solutions as described in Lemma \ref{lem.B1}. Let $\ell:=G_1-G_2$. We first show the maximum value of $\ell$ cannot be bigger than zero. Otherwise, suppose $\ell(\eta)$ attains its maximum at $\eta_{\max}$ with $\ell(\eta_{\max}) >0.$ Then due to $G_1(0)-G_2(0)=G_{b1} - G_{b2} \le 0$, $\eta_{\max} \neq 0$. For $\eta_{\max}\in(0,B]$, we have $\partial_\eta \ell(\eta_{\max})=0$ and 
\begin{align*}
	-\partial_\eta^2 \ell (\eta_{\max}) = g_1(\eta_{\max}) - g_2(\eta_{\max}) -  (\zeta(G_1(\eta_{\max})) - \zeta(G_2(\eta_{\max}))) <0,
\end{align*}
due to the assumption $g_1\le g_2$ and the monotonicity of $\zeta$. Hence $\ell(\eta)$ is strictly convex near $\eta_{\max}$, which contradicts the assumption that $\eta_{\max}$ is a maximum point. Therefore, $\ell(\eta) \le \ell(\eta_{\max}) \le 0$ for any $\eta \in [0,B]$. That is, $G_1(\eta) \le G_2(\eta)$ for any $\eta\in [0,B]$. Thus we finish the proof of the lemma.
\end{proof}
\begin{rem}
	Compared to the proof for the flat case in \cite{Bounadrylayer2019GHM2}, we can see here that adding $F(\eps;\eta)\partial_\eta G$ does not change the maximum principle since the stationary point $\partial_\eta G=0$ vanishes.
\end{rem}

Next, we show the monotonicity of the transport equation \eqref{eq:B2}.
\begin{lemma}\label{lem.B2}
	Given $\varphi_b=\varphi_b(\phi) \in L^\infty((0,\pi))$. 
	Let $h=h(\eta,\phi) \in C([0,B]\times [-\pi,\pi))$.
	 Then there exists a unique solution $\varphi \in C^1([0,B]\times [-\pi,\pi))$ to the equation 
	\begin{align}\label{eq:varphiB}
		&\sin\phi \partial_\eta \varphi + F(\eps;\eta) \cos\phi \partial_\phi \varphi + \varphi = h,\text{ in }[0,B]\times [-\pi,\pi),
	\end{align}
	with boundary conditions 
	\begin{align}\label{eq:varphiBb}
		\varphi(0,\phi) = \varphi_b(\phi),\quad \varphi(B,\phi) = \varphi(B,-\phi),\quad \text{for }\sin\phi>0,
	\end{align}
	and the solution satisfies 
	\begin{align}\label{eq:varphilinf}
		\|\varphi\|_{L^\infty([0,B])} \le \|h\|_{L^\infty([0,B]\times [-\pi,\pi))} + \|\varphi_b\|_{L^\infty((0,\pi))}.
	\end{align}

	Moreover, if $0\le \varphi_b\le \gamma$ and $0\le h \le \gamma$, then the solution $\varphi \ge 0$ on $[0,B]\times [-\pi,\pi)$. Furthermore,
	let $\varphi_1,\varphi_2$ be two solutions of the above equation corresponding to the source term $h_1$, $h_2$ and boundary data $\varphi_{b1}$, $\varphi_{b2}$, respectively. Suppose $0\le h_1 \le h_2 \le \gamma$ on $[0,B]$ and $0\le \varphi_{b1}\le \varphi_{b2} \le \gamma$ on $(0,\pi)$, then $0\le \varphi_1 \le \varphi_2 \le \gamma$ on $[0,B]\times[-\pi,\pi)$.
\end{lemma}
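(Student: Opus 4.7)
The plan is to use the method of characteristics. Define the characteristic flow $(\eta(s),\phi(s))$ by
\begin{align*}
\dot\eta = \sin\phi, \quad \dot\phi = F(\eps;\eta)\cos\phi,
\end{align*}
so that along characteristics the PDE reduces to the scalar ODE $\dot\varphi + \varphi = h$, which Duhamel's formula solves as $\varphi(s) = e^{-s}\varphi(0) + \int_0^s e^{-(s-\tau)}h(\eta(\tau),\phi(\tau))\,d\tau$. A useful structural observation is that, writing $F = -\partial_\eta V$ as in the potential lemma, the quantity $e^{-V(\eps;\eta)}\cos\phi$ is conserved along characteristics; this parameterizes each trajectory explicitly and shows that the flow is nondegenerate away from $\cos\phi=0$.

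First I would construct the solution by tracing characteristics backward from each interior point $(\eta_0,\phi_0) \in [0,B]\times[-\pi,\pi)$. A characteristic either hits $\{\eta=0\}$, where the Dirichlet datum $\varphi_b$ applies (necessarily with $\sin\phi>0$ by the direction of the flow), or hits $\{\eta=B\}$, where the reflection condition $\varphi(B,\phi)=\varphi(B,-\phi)$ relaunches the trajectory at angular coordinate $-\phi$. Each traversal contributes a contraction factor $e^{-s}<1$, so the resulting series representation converges absolutely, and the $C^1$ regularity of $\varphi$ follows from the smoothness of the characteristic flow together with the $C^1$ regularity of $h$ and $\varphi_b$ (and the matching of one-sided limits via the reflection symmetry).

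Second, the $L^\infty$ bound \eqref{eq:varphilinf} is a direct consequence of the Duhamel representation: along each segment $|\varphi(s)|\le e^{-s}|\varphi(0)|+(1-e^{-s})\|h\|_\infty$, and since the initial value inherited from $\eta=0$ is controlled by $\|\varphi_b\|_{L^\infty((0,\pi))}$ and reflections preserve magnitude, summing (or taking the envelope) yields $\|\varphi\|_\infty\le\|\varphi_b\|_\infty+\|h\|_\infty$. Uniqueness is then immediate from the same estimate applied to the difference of two solutions with zero data. For non-negativity, if $\varphi_b\ge0$ and $h\ge0$, the Duhamel integrand is non-negative along every characteristic segment, so $\varphi\ge0$ throughout. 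Monotonicity follows by applying non-negativity to $w=\varphi_2-\varphi_1$, which solves the same equation with source $h_2-h_1\ge 0$ and boundary data $\varphi_{b2}-\varphi_{b1}\ge 0$; the upper bound $\varphi_2\le\gamma$ comes from the same argument applied to $\gamma-\varphi_2$, using $\gamma-h_2\ge 0$ and $\gamma-\varphi_{b2}\ge 0$ together with the fact that the constant function $\gamma$ itself solves the equation with source $\gamma$.

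The main obstacle is the handling of grazing characteristics with $\sin\phi\approx 0$: these move slowly in $\eta$ and may require many reflections at $\eta=B$ before reaching the Dirichlet boundary $\eta=0$. The conservation identity $e^{-V(\eps;\eta)}\cos\phi=\text{const}$ pins down each trajectory in phase space and ensures that backward tracing does terminate at $\eta=0$ after finitely many (or at most countably many with summable contributions) reflections, paralleling the flat-boundary construction in \cite{Bounadrylayer2019GHM2}. The extra drift term $F\cos\phi\,\partial_\phi\varphi$ is controlled through $\|F\|_{L^\infty}\le \tfrac{4\eps}{4-3\delta}$ from \eqref{eq:Fin4}, so the geometric correction is a lower-order perturbation of the flat-boundary characteristic flow and does not alter the essential structure of either the construction or the maximum-principle arguments.
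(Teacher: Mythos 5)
Your proposal is correct and follows essentially the same route as the paper: integrate along the characteristics of $\dot\eta=\sin\phi$, $\dot\phi=F\cos\phi$, parameterized by the conserved energy $E=\cos\phi\,e^{-V(\eps;\eta)}$, and read off the $L^\infty$ bound, non-negativity, and monotonicity from the positivity of the resulting Duhamel kernels. One correction to your geometric picture: no series over reflections is needed, because a backward characteristic undergoes at most \emph{one} event before reaching $\eta=0$ — either a single reflection at $\eta=B$ (which preserves $|E|$ and flips the sign of $\sin\phi$, after which $\cos\phi=Ee^{V(\eta)}$ forces $\eta$ to decrease monotonically to $0$), or, when $|E|>e^{-V(\eps;B)}$, an interior turning at $\sin\phi=0$ at height $\eta_+<B$ where no boundary condition is applied at all; this second case is the paper's Case 3 and must be given its own formula rather than being treated as a reflection. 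Also note the lemma only assumes $h$ continuous and $\varphi_b\in L^\infty$, so you should not invoke $C^1$ regularity of the data when justifying the regularity of $\varphi$.
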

\begin{proof}
	To solve \eqref{eq:varphiB}, we use the method of characteristics.  Suppose $\phi=\phi(\eta)$ is the characteristic line given by 
	\begin{align*}
		\frac{d}{d\eta} \phi = F(\eps;\eta)\cos\phi  = -\cos\phi \partial_\eta V(\eps;\eta),\quad \phi(\eta=0) = \phi_0 
	\end{align*}
	whose solution is given by 
	\begin{align*}
		\cos\phi e^{-V(\eps;\eta)} = \cos\phi_0.
	\end{align*}
	Define the energy $E(\eta,\phi) = \cos\phi e^{-V(\eta)}$. 
	The characteristics lines for \eqref{eq:varphiB} is the curves with constant energy. Let the curves are parameterized by $|E|=e^{-V(\eta_+)}$. We define $\phi' = \phi'(\phi,\eta,\eta')$ to denote the solution of $\cos\phi e^{-V(\eta)} = \cos \phi' e^{-V(\eta')} = e^{-V(\eta_+)}$, which is for $0\le \eta'\le \eta_+$, 
	\begin{align*}
		\phi'(\phi,\eta,\eta') = \arccos \left(\cos\phi e^{V(\eta')-V(\eta)}\right),
	\end{align*}
	where $\arccos$ is defined in $[0,\pi)$ as a single-valued function. In order to extend the above relation to $[-\pi,\pi]$, we use the symmetric property of $\cos\phi$ and define for $\phi<0$,
	\begin{align*}
		\phi'(-\phi,\eta,\eta')= -\phi'(\phi,\eta,\eta').
	\end{align*}
	Thus $\phi'=\phi'(\phi,\eta,\eta')$ defines a function on $[-\pi,\pi]\times [0,B]\times [0,B]$.

	Equation \eqref{eq:varphiB} becomes 
	\begin{align}\label{eq:varphi/ch}
		\sin\phi(\eta) \frac{d}{d\eta} \varphi(\eta,\phi(\eta)) + \varphi = h,
	\end{align}
	with boundary condition 
	\begin{align*}
		\varphi(0,\phi_0) = \varphi_b(\phi_0),\quad \text{for }\sin\phi_0 >0,\quad \varphi(B,\phi(B)) = \varphi(B,-\phi(B)),\quad \text{ for }\sin\phi_B >0.
	\end{align*}
	Based on the values of $\sin\phi$ and $\eta$, we divide into three different cases.

	\emph{Case 1. $\sin \phi >0$.} We integrate \eqref{eq:varphi/ch} over $[0,\eta]$ and get 
	\begin{align*}
		\varphi(\eta,\phi(\eta)) = \varphi_b(\phi_0) e^{-\int_0^\eta \frac{1}{\sin \phi(\xi)} d\xi} + \int_0^\eta \frac{h(\xi,\phi'(\phi,\eta,\xi))}{\sin \phi(\xi)} e^{-\int_\xi^\eta \frac{1}{\sin\phi(\rho)} d\rho} d\xi,
	\end{align*}
	which using $\phi_0=\phi'(\phi,\eta,0)$, $\phi(\xi)=\phi'(\phi,\eta,\xi)$, can be written as 
	\begin{align}\label{eq:varphi/1}
		\varphi(\eta,\phi) = \varphi_b(\phi'(\phi,\eta,0)) e^{-\int_0^\eta \frac{1}{\sin \phi'(\phi,\eta,\xi)} d\xi} + \int_0^\eta \frac{h(\xi,\phi'(\phi,\eta,\xi))}{\sin \phi'(\phi,\eta,\xi)} e^{-\int_\xi^\eta \frac{1}{\sin \phi'(\phi,\eta,\rho)}d\rho} d\xi.
	\end{align}

	\emph{Case 2. $\sin \phi<0$ and $|E(\phi,\eta)|\le e^{-V(\eps;B)}$.} In this situation, we can integrate \eqref{eq:varphi/ch} over $[\eta,B]$ to get 
	\begin{align*}
		\varphi(B,\phi(B)) e^{\int_0^L \frac{1}{\sin \phi(\xi)} d\xi } - \varphi(\eta,\phi(\eta)) e^{\int_0^\eta \frac{1}{\sin\phi(\xi)}d\xi} = \int_\eta^B \frac{h(\xi,\phi'(\phi,\eta,\xi))}{\sin\phi(\xi)} e^{\int_0^\xi \frac{1}{\sin\phi(\rho)}d\rho } d\xi.
	\end{align*}
	Since $\sin \phi <0$, we have $\phi(\xi)= \phi'(\phi,\eta,\xi) = - \phi'(-\phi,\eta,\xi)$ and $\phi(B)=\phi'(\phi,B,\xi)=-\phi'(-\phi,\eta,B)$. The above relationship can be rewritten as 
	\begin{align*}
		\varphi(\eta,\phi) = \varphi(B,-\phi'(-\phi,\eta,L)) e^{\int_\eta^B \frac{1}{-\sin\phi'(-\phi,\eta,\xi)}d\xi} -  \int_\eta^B \frac{h(\xi,-\phi'(-\phi,\eta,\xi))}{-\sin \phi'(-\phi,\eta,\xi)} e^{-\int_\xi^\eta \frac{1}{-\sin \phi'(-\phi,\eta,\rho)}d\rho} d\xi.
	\end{align*}
	Since $\sin \phi(B)<0$, we have $\varphi(B,\phi(B)) = \varphi(B,-\phi(B)) = \varphi(B,\phi'(-\phi,\eta,B))$ due to the boundary condition \eqref{eq:varphiBb}. By \eqref{eq:varphi/1}, we have 
	\begin{align*}
		\varphi(B,\phi'(-\phi,\eta,B)) = \varphi_b(\phi'(-\phi,\eta,0))e^{-\int_0^B \frac{1}{\sin\phi'(-\phi,\eta,\xi)}d\xi} + \int_0^B \frac{h(\xi,\phi'(-\phi,\eta,\xi))}{\sin\phi'(-\phi,\eta,\xi)} e^{-\int_\xi^B\frac{1}{\sin\phi'(-\phi,\eta,\rho)}d\rho} d\xi.
	\end{align*}
	Taking this into the previous equation leads to 
	\begin{align}\label{eq:varphi/2}
		\varphi(\eta,\phi) &= \varphi_b(\phi'(-\phi,\eta,0)) e^{-\int_0^B + \int_\eta^B \frac{1}{\sin\phi'(-\phi,\eta,\xi)}d\xi} + \int_0^B \frac{h(\xi,\phi'(-\phi,\eta,\xi))}{\sin\phi'(-\phi,\eta,\xi)} e^{-\int_\xi^B + \int_{\eta}^B \frac{1}{\sin\phi'(-\phi,\eta,\rho)}d\rho} d\xi \nonumber\\
		&\quad + \int_\eta^B \frac{h(\xi,-\phi'(-\phi,\eta,\xi))}{\sin \phi'(-\phi,\eta,\xi)} e^{\int_\xi^\eta \frac{1}{\sin \phi'(-\phi,\eta,\rho)}d\rho} d\xi.
	\end{align}
	Here we use the notation $\int_0^B + \int_\eta^B \cdot d\xi = \int_0^B \cdot d\xi + \int_\eta^B \cdot d\xi$.

	\emph{Case 3. $\sin\phi<0$ and $|E(\phi,\eta)|>e^{-V(\eps;B)}$.} In this case, the characteristic curve does not touch the boundary $\eta=B$ but reaches $(\phi=0,\eta=\eta_+)$. We integrate \eqref{eq:varphi/ch} over $[\eta,\eta_+]$ and get 
	\begin{align*}
		\varphi(\eta,\phi) = \varphi(\eta_+,0) e^{\int_\eta^{\eta_+} \frac{1}{-\sin\phi'(-\phi,\eta,\xi)}d\xi} -  \int_\eta^{\eta_+} \frac{h(\xi,-\phi'(-\phi,\eta,\xi))}{-\sin \phi'(-\phi,\eta,\xi)} e^{-\int_\xi^{\eta} \frac{1}{-\sin \phi'(-\phi,\eta,\rho)}d\rho} d\xi.
	\end{align*}
	Since $(\eta_+,0)$ also lies on the right characteristic lines with $\phi'(-\phi,\eta,\eta_+)=0$, we get from \eqref{eq:varphi/1},
	\begin{align*}
		\varphi(\eta,\phi'(-\phi,\eta,\eta_+)) &= \varphi_b(\phi'(-\phi,\eta,0)) e^{-\int_0^{\eta_+} \frac{1}{-\sin \phi'(-\phi,\eta,\xi)} d\xi} \nonumber\\
		&\quad + \int_0^{\eta_+} \frac{h(\xi,\phi'(-\phi,\eta,\xi))}{-\sin \phi'(-\phi,\eta,\xi)} e^{-\int_\xi^\eta \frac{1}{-\sin \phi'(-\phi,\eta,\rho)}d\rho} d\xi.
	\end{align*}
	Taking it into the previous equation leads to 
	\begin{align}\label{eq:varphi/3}
		\varphi(\eta,\phi) &= \varphi_b(\phi'(-\phi,\eta,0)) e^{-\int_0^{\eta_+} + \int_\eta^{\eta_+} \frac{1}{\sin\phi'(-\phi,\eta,\xi)}d\xi} + \int_0^{\eta_+} \frac{h(\xi,\phi'(-\phi,\eta,\xi))}{\sin\phi'(-\phi,\eta,\xi)} e^{-\int_\xi^{\eta_+} + \int_{\eta}^{\eta_+} \frac{1}{\sin\phi'(-\phi,\eta,\rho)}d\rho} d\xi \nonumber\\
		&\quad + \int_\eta^{\eta_+} \frac{h(\xi,-\phi'(-\phi,\eta,\xi))}{\sin \phi'(-\phi,\eta,\xi)} e^{\int_\xi^{\eta} \frac{1}{\sin \phi'(-\phi,\eta,\rho)}d\rho} d\xi.
	\end{align}

	In any case, we can show that \eqref{eq:varphi/1}, \eqref{eq:varphi/2} and \eqref{eq:varphi/3} is a solution to \eqref{eq:varphiB} with boundary condition \eqref{eq:varphiBb}. Moreover, inequality \eqref{eq:varphilinf} holds. To show this, we have in case 1, from \eqref{eq:varphi/1},
	\begin{align*}
		\|\varphi(\eta,\phi)\|_{L^\infty([0,B]\times [-\pi,\pi))} &\le \|\varphi_b\|_{L^\infty((0,\pi))} + \|h\|_{L^\infty([0,B]\times [-\pi,\pi)} \int_0^\eta \frac{1}{\sin\phi(\xi)} e^{-\int_\xi^\eta \frac{1}{\sin\phi(\rho)}d\rho} d\xi \nonumber\\
		&=\|\varphi_b\|_{L^\infty((0,\pi))} + \|h\|_{L^\infty([0,B]\times [-\pi,\pi)} \left(1- e^{-\int_0^\eta \frac{1}{\sin\phi(\rho)}d\rho} \right) \nonumber\\
		&\le \|\varphi_b\|_{L^\infty((0,\pi))} + \|h\|_{L^\infty([0,B]\times [-\pi,\pi)},
	\end{align*}
	and in case 2 from \eqref{eq:varphi/2},
	\begin{align*}
		&\|\varphi(\eta,\phi)\|_{L^\infty([0,B]\times [-\pi,\pi))}\nonumber\\
		 &\quad \le \|\varphi_b\|_{L^\infty((0,\pi))} + \|h\|_{L^\infty([0,B]\times [-\pi,\pi)} (1-e^{-\int_0^B \frac{1}{\sin \phi'(-\phi,\eta,\rho)}d\rho}) e^{-\int_\eta^B \frac{1}{\sin \phi'(-\phi,\eta,\rho)}d\rho} \nonumber\\
		&\qquad + \|h\|_{L^\infty([0,B]\times [-\pi,\pi)} (1-e^{-\int_\eta^B \frac{1}{\sin \phi'(-\phi,\eta,\rho)}d\rho}) \nonumber\\
		&\quad\le \|\varphi_b\|_{L^\infty((0,\pi))} + \|h\|_{L^\infty([0,B]\times [-\pi,\pi)} ,
	\end{align*}
	and in case 3 from \eqref{eq:varphi/3},
	\begin{align*}
		&\|\varphi(\eta,\phi)\|_{L^\infty([0,B]\times [-\pi,\pi))} \nonumber\\
		&\quad \le \|\varphi_b\|_{L^\infty((0,\pi))} + \|h\|_{L^\infty([0,B]\times [-\pi,\pi)} (1-e^{-\int_0^{\eta_+} \frac{1}{\sin \phi'(-\phi,\eta,\rho)}d\rho}) e^{-\int_\eta^{\eta_+} \frac{1}{\sin \phi'(-\phi,\eta,\rho)}d\rho} \nonumber\\
		&\qquad + \|h\|_{L^\infty([0,B]\times [-\pi,\pi)} (1-e^{-\int_\eta^{\eta_+} \frac{1}{\sin \phi'(-\phi,\eta,\rho)}d\rho}) \nonumber\\
		&\quad\le \|\varphi_b\|_{L^\infty((0,\pi))} + \|h\|_{L^\infty([0,B]\times [-\pi,\pi)} ,
	\end{align*}

	Next assume $0\le \varphi_b \le \gamma$, $0\le h\le \gamma$, we show the boundness of solutions, we can see obviously from \eqref{eq:varphi/1}, \eqref{eq:varphi/2} and \eqref{eq:varphi/3} that as long as $\varphi_b \ge 0$ and $h\ge 0$, $\varphi(\eta,\phi)\ge 0$ for all $\eta\in[0,B]$ and $\phi\in [-\pi,\pi)$. Next we show if $\varphi_b \le \gamma$, $h\le \gamma$, then $\varphi\le \gamma$. We show this for case 1 and leave the other cases to the reader. For case 1, from \eqref{eq:varphi/1}, 
	\begin{align*}
		\varphi(\eta,\phi) &\le  \gamma e^{-\int_0^\eta \frac{1}{\sin \phi'(\phi,\eta,\xi)} d\xi} + \gamma \int_0^\eta \frac{1}{\sin \phi'(\phi,\eta,\xi)} e^{-\int_\xi^\eta \frac{1}{\sin \phi'(\phi,\eta,\rho)}d\rho} d\xi \nonumber\\
		&= \gamma e^{-\int_0^\eta \frac{1}{\sin \phi'(\phi,\eta,\xi)} d\xi} + \gamma \int_0^\eta d e^{-\int_\xi^\eta \frac{1}{\sin \phi'(\phi,\eta,\rho)}d\rho} \nonumber\\
		&=\gamma e^{-\int_0^\eta \frac{1}{\sin \phi'(\phi,\eta,\xi)} d\xi} + \gamma e^{-\int_\xi^\eta \frac{1}{\sin \phi'(\phi,\eta,\rho)}d\rho} \big|_{\xi=0}^\eta = \gamma.
	\end{align*}
	Therefore, $0\le \varphi(\eta,\phi) \le \gamma$ for any $\eta\in [0,B]$, $\phi\in [-\pi,\pi)$.

	Finally, the monotonicity of solutions for equation \eqref{eq:varphi/ch} can be seen from the formulas \eqref{eq:varphi/1}, \eqref{eq:varphi/2} and \eqref{eq:varphi/3} directly. For example, if $\varphi_{b1}\le \varphi_{b2}$ and $h_1\le h_2$, then for the case 1,
	\begin{align*}
		(\varphi_1 - \varphi_2)(\eta,\phi) &= (\varphi_{b1}-\varphi_{b2})(\phi'(\phi,\eta,0)) e^{-\int_0^\eta \frac{1}{\sin \phi'(\phi,\eta,\xi)} d\xi}\nonumber\\
		&\quad + \int_0^\eta \frac{(h_1-h_2)(\xi,\phi'(\phi,\eta,\xi))}{\sin \phi'(\phi,\eta,\xi)} e^{-\int_\xi^\eta \frac{1}{\sin \phi'(\phi,\eta,\rho)}d\rho} d\xi\le 0,
	\end{align*}
	hence $\varphi_1\le \varphi_2$ on $[0,B]\times [-\pi,\pi)$ and thus finish the proof.
\end{proof}

\begin{rem}
	Compared to the proof of the flat case in \cite{Bounadrylayer2019GHM2}, the additional variable $\phi$ could be integrated in the characteristics and the monotonicity of solution remains true. Moreover,  the estimate \eqref{eq:varphilinf} has already been proved in \cite[equations (4.52)-(4.54)]{wu2015geometric} where the monotonicity property is new which is crucial in our approach.
\end{rem}
We now introduce the concept of \emph{sub-solutions}. Our definition is consistent from the theory in the \cite{evans1997partial,clouet2009milne} or also in \cite{Bounadrylayer2019GHM2}.
\begin{definition}
	A pair of functions $(G,\varphi) \in C^2([0,B])\times C^1([0,B]\times[-\pi,\pi))$ is called a \emph{sub-solution} to system \eqref{eq:B1}-\eqref{eq:B2} if it satisfies the inequality
	\begin{align*}
		&-\partial_\eta^2 G - F(\eps;\eta)\partial_\eta G - \langle \varphi - G^4 \rangle \le 0,\\
		&\sin\phi \partial_\eta \varphi + F(\eps;\eta) \cos\phi \partial_\phi \varphi + \varphi - G^4 \le 0,
	\end{align*}
	and on the left boundary $G(0)\le G_b$, $\varphi(0,\phi)\le \varphi_b(\phi)$ for $\sin\phi >0$ and $\partial_\eta G(B)=0$, $\varphi(B,\phi)=\varphi(B,-\phi)$ on the right boundary.
\end{definition}

Next we proceed for the proof of Lemma \ref{lem.B}.
\begin{proof}[Proof of Lemma \ref{lem.B}]
	The proof of the lemma follows the same way as the proof of \cite[Theorem 4]{Bounadrylayer2019GHM2}. Here we give a brief description of the proof. For the details, we refer the reader to \cite{Bounadrylayer2019GHM2}.

	First we introduce the function $\zeta$ by 
	\begin{align*}
		\zeta(G) = \left\{\begin{array}[]{ll}
			\frac{G}{1-G}, &\text{for }G<0,\\
			G^4,&\text{for }0\le G \le \gamma,\\
			G^4 + \frac{G-\gamma}{1+G-\gamma},&\text{for }G>\gamma,
		\end{array}\right.
	\end{align*}
	which extends the power function $G^4$ to $\mathbb{R}$ and $\zeta$ is an increasing function. Starting with any sub-solution $G^0,\varphi^0$ (e.g. $G^0=0,\varphi^0=0$), we construct a sequence of functions $\{G^k,\varphi^k\}$ by solving iteratively
	\begin{align}
		&-\partial_\eta^2 G^k - F(\eps;\eta)  \partial_\eta G^k + \langle \zeta(G^4) \rangle =  \langle \varphi^{k-1}\rangle, \label{eq:lm2/k1} \\
		&\sin\phi\partial_\eta \varphi^k + F(\eps;\eta)\cos\phi\partial_\phi \varphi^k + \varphi^k = (G^{k-1})^4.\label{eq:lm2/k2}
	\end{align}
	We can use Lemma \ref{lem.B1} and Lemma \ref{lem.B2} and show by induction that $\gamma\ge G^k(\eta)\ge G^{k-1}(\eta)\ge 0$ and $\gamma\ge \varphi^k (\eta,\phi)\ge \varphi^{k-1}(\eta,\phi)\ge 0$ for any $\eta\in[0,B]$ and $\phi\in[-\pi,\pi)$. Therefore, $\{G^k,\varphi^k\}$ are bounded increasing sequences. Hence there existence bounded functions $(G,\varphi)$ such that $(G^k,\varphi^k) \to (G,\varphi)$ pointwise as $k\to \infty$. Moreover, since the solutions to \eqref{eq:lm2/k1}-\eqref{eq:lm2/k2} could be expressed using integral formulas using Lemma \ref{lem.B1} and Lemma \ref{lem.B2}, we can pass to the limit in these formulas and use Beppo Levi's lemma to conclude that $(G,\varphi) \in C^2([0,B])\times C^2([0,B]\times[-\pi,\pi])$. Therefore, by Dini's theorem,
	$\{G^k,\varphi^k\}$ converge uniformly to $(G,\varphi)$ and $(G,\varphi)$ solves \eqref{eq:B1}-\eqref{eq:B2} with boundary conditions \eqref{eq:B1b}-\eqref{eq:B2b}. 

	The proof of uniqueness is slightly different than that of \cite{Bounadrylayer2019GHM2}. Suppose $(G_1,\varphi_1)$ and $(G_2,\varphi_2)$ be two solutions to system \eqref{eq:B1}-\eqref{eq:B2}. Let 
	\begin{align*}
		G^0 = \max\{G_1,G_2\},\quad \varphi^0=\max\{\varphi_1,\varphi_2\},
	\end{align*}
	and we can construct sequence of functions $\{G^k,\varphi^k\}$ by the above precedure. Following the above arguments, $(G^k,\psi^k)$ converges to $(G,\varphi)$ which is a solution to \eqref{eq:B1}-\eqref{eq:B2} and $G\ge G^0,\varphi\ge \varphi^0$. Thus $G-G_1$, $\varphi-\varphi_1$ is a non-negative solution to the system 
	\begin{align}
		&-\partial_\eta^2 (G-G_1) - F(\eps;\eta) \partial_\eta (G-G_1) + \langle G^4 - (G_1)^4\rangle = \langle \varphi-\varphi_1\rangle,\label{eq:GG1/1}\\
		&\sin\phi  \partial_\eta (\varphi-\varphi_1) + F(\eps;\eta)\cos\phi\partial_\phi (\varphi-\varphi_1) + \varphi-\varphi_1 = G^4 - (G_1)^4,\label{eq:GG1/2}
	\end{align} 
	with 
	\begin{align*}
		&G(0)-G_1(0)=0,\quad \partial_x (G-G_1)(B)=0,\\
		&\varphi(0,\phi) -\varphi_1(0,\phi)=0,\quad \varphi(B,\phi)-\varphi_1(B,\phi) = \varphi(B,-\phi)-\varphi_1(B,-\phi) ,\quad \text{for }\sin\phi>0.
	\end{align*}
	Comparing \eqref{eq:GG1/1} with \eqref{eq:GG1/2}, 
	\begin{align*}
		\partial_\eta^2 (G-G_1) + F(\eps;\eta) \partial_\eta (G-G_1) & = \partial_\eta \langle \sin\phi (\varphi-\varphi_1) \rangle + F(\eps;\eta) \langle \cos\phi \partial_\phi(\varphi-\varphi_1)\rangle \nonumber\\
		&=\partial_\eta \langle \sin\phi (\varphi-\varphi_1) \rangle - F(\eps;\eta) \langle \partial_\phi\cos\phi (\varphi-\varphi_1)\rangle \nonumber\\
		&=\partial_\eta \langle \sin\phi (\varphi-\varphi_1) \rangle + F(\eps;\eta) \langle \sin\phi (\varphi-\varphi_1)\rangle.
	\end{align*}
	That is 
	\begin{align*}
		\partial_\eta \left(\partial_\eta (G-G_1) - \langle \sin\phi (\varphi-\varphi_1)\right) = -F(\eps;\eta) \left(\partial_\eta (G-G_1) - \langle \sin\phi (\varphi-\varphi_1)\right).
	\end{align*}
	Due to $F(\eps;\eta)\ge 0$ on $[0,B]$ and at $\eta=B$,
	\begin{align*}
		\partial_\eta (G-G_1)(B) - \langle \sin\phi (\varphi-\varphi_1)(B,\phi)\rangle = 0,
	\end{align*}
	the previous equation implies 
	\begin{align}\label{eq:GG1=}
		\partial_\eta (G-G_1)(\eta) - \langle \sin\phi (\varphi-\varphi_1)(\eta,\phi)\rangle =0,\quad \text{for any }\eta \in [0,B].
	\end{align}
	When $|E(\phi,\eta)|\ge e^{-V(\eps;B)}$, for $\sin\phi<0$, the solution $\varphi-\varphi_1$ can be written using \eqref{eq:varphi/2} as 
	\begin{align*}
		\varphi(\eta,\phi) &= \int_0^B \frac{G(\xi)-G_1(\xi)}{\sin\phi'(-\phi,\eta,\xi)} e^{-\int_\xi^B + \int_{\eta}^B \frac{1}{\sin\phi'(-\phi,\eta,\rho)}d\rho} d\xi  + \int_\eta^B \frac{G(\xi)-G_1(\xi)}{\sin \phi'(-\phi,\eta,\xi)} e^{\int_\xi^B \frac{1}{\sin \phi'(-\phi,\eta,\rho)}d\rho} d\xi.
	\end{align*}
	If $G(\xi)-G_1(\xi) >0$ on some interval $\xi\in (a,b)\in [0,B]$, then $\varphi(\eta,\phi)>0$. Therefore,
	\begin{align*}
		\langle \sin\phi (\varphi-\varphi_1)(0,\phi)\rangle &=\int_{\sin\phi>0} \sin\phi (\varphi-\varphi_1)(\eta,\phi) d\phi + \int_{\sin\phi<0} \sin\phi (\varphi-\varphi_1)(\eta,\phi) d\phi <0,
	\end{align*}
	which together with \eqref{eq:GG1=} implies 
	\begin{align*}
		\partial_\eta (G-G_1)(0) <0,
	\end{align*}
	which contradicts the fact that $G(0)-G_1(0)=0$ and $G-G_1$ is non-negative on $[0,B]$. Similarly, when $|E(\phi,\eta)|<e^{-V(\eps;B)}$, we can use \eqref{eq:varphi/3} and get the same contradiction argument. Thus $G(\eta)=G_1(\eta)$ for any $\eta \in [0,B]$. The uniqueness for \eqref{eq:GG1/2} then implies $\varphi-\varphi_1=0$. Therefore, $G=G_1$ and $\varphi=\varphi_1$. By the same argument, we can show $G=G_2$ and $\varphi=\varphi_2$. Hence $G_1=G_2$ and $\varphi_1=\varphi_2$ and the solution to \eqref{eq:B1}-\eqref{eq:B2} is unique.
\end{proof}

\subsection{Weighted estimates on the solutions on bounded interval}
Next we show a weighted estimate on the solution to system \eqref{eq:B1}-\eqref{eq:B2}. 
\begin{lemma}
Assuming the assumption of Lemma \ref{lem.B} holds and $(G_B^\eps,\varphi_B^\eps)$ be the unique solution to system \eqref{eq:B1}-\eqref{eq:B2} with boundary conditions \eqref{eq:B1b}-\eqref{eq:B2b}. Then the following inequalities hold for $0\le \eps\le 1/4$:
	\begin{align}\label{eq:est/B1}
		&\int_0^B 4(G_B^\eps)^3 |\partial_\eta {G_B^\eps}|^2 d\eta + \frac12 \int_0^B \int_{-\pi}^\pi(\varphi_B^\eps-(G_B^\eps)^4)^2 d\phi d\eta  + \frac12 \int_{\sin\phi<0} |\sin\phi| (\varphi_B^\eps(0,\phi)-G_b^4)^2  d\phi \nonumber \\
		&\quad \le \frac12 \int_{\sin\phi>0} \sin\phi (\varphi_b(\phi)-G_b^4)^2  d\phi,
	\end{align}
	and for $0\le \eps \le 1/4$, $0\le \alpha\le 1/4$,
	\begin{align}\label{eq:est/B2}
		&\int_0^B e^{2\alpha\eta} 4(G_B^\eps)^3 |\partial_\eta G_B^\eps|^2 d\eta + \frac14 \int_0^B \int_{-\pi}^\pi e^{2\alpha\eta} (\varphi^\eps_B-(G_B^\eps)^4)^2 d\phi d\eta + \frac12 \int_{\sin\phi<0} |\sin\phi| (\varphi^\eps_B(0,\phi)-G^4_b)^2  d\phi \nonumber\\
		&\qquad = \frac12 \int_{\sin\phi>0} \sin\phi (\varphi_b - G_b^4)^2 d\phi.
	\end{align}
	Moreover, setting 
	\begin{align*}
		\omega_B^\eps = \varphi_B^\eps - \frac{1}{2\pi} \int_{-\pi}^\pi \varphi_B^\eps d\phi,\quad q_B^\eps = \varphi_B^\eps - \omega_B^\eps,
	\end{align*}
	then the following inequalities hold:
	\begin{align} 
		&\|\partial_\eta G^\eps_B\|_{L^2([0,B])},\quad \|\omega_B^\eps\|_{L^2([0,B]\times[-\pi,\pi))} \le  \left(\int_{\sin\phi>0} \sin\phi (\varphi_b - G_b^4)^2 d\phi\right)^{1/2},\quad    \label{eq:est/B3}\\
		&|q_B^\eps(\eta)| \le C(\delta,\varphi_b,G_b)(1+\sqrt{\eta}+ \|\omega_B^\eps(\eta,\cdot)\|_{L^2([-\pi,\pi))}), ~ |G(\eta)|\le C(G_b,\varphi_b)(1+\sqrt{\eta}), \text{for any } \eta \in [0,B].\label{eq:est/B4}
	\end{align}
	\end{lemma}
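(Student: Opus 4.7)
The plan is to adapt the energy identity used for the nonlinear Milne problem in the flat case to the bounded-interval problem with the geometric factor $F(\eps;\eta)$, carefully absorbing the additional terms that this factor introduces. The natural test function is $r_B^\eps := \varphi_B^\eps - (G_B^\eps)^4$, and the whole scheme hinges on the preliminary compatibility identity
\[
\partial_\eta G_B^\eps(\eta) \;=\; \langle \sin\phi\, r_B^\eps(\eta,\cdot)\rangle, \qquad \eta\in[0,B].
\]
I would derive this by averaging the transport equation in $\phi$ (using periodicity and $\langle \cos\phi\,\partial_\phi f\rangle=\langle \sin\phi\, f\rangle$) and subtracting it from the elliptic equation, obtaining a first-order ODE $m'+Fm=0$ for $m:=\partial_\eta G_B^\eps - \langle \sin\phi\, r_B^\eps\rangle$; the conditions $\partial_\eta G_B^\eps(B)=0$ and $\varphi_B^\eps(B,\phi)=\varphi_B^\eps(B,-\phi)$ force $m(B)=0$, so $m\equiv 0$.

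To establish \eqref{eq:est/B1}, I would multiply the transport equation by $r_B^\eps$ and integrate over $[0,B]\times[-\pi,\pi)$. Splitting $\partial_\eta \varphi_B^\eps = \partial_\eta r_B^\eps + 4(G_B^\eps)^3 \partial_\eta G_B^\eps$ and $\partial_\phi \varphi_B^\eps = \partial_\phi r_B^\eps$, the $\sin\phi\,\partial_\eta r_B^\eps \cdot r_B^\eps$ piece, after integration by parts in $\eta$, produces the two boundary integrals in \eqref{eq:est/B1} (the $\eta=B$ contribution vanishes by the symmetry of $\varphi_B^\eps$ in $\phi$), while the cross term $4\sin\phi\,(G_B^\eps)^3 \partial_\eta G_B^\eps\cdot r_B^\eps$ collapses to $\int_0^B 4(G_B^\eps)^3(\partial_\eta G_B^\eps)^2\, d\eta$ by the compatibility identity. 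Integration by parts in $\phi$ of the geometric term $F\cos\phi\,\partial_\phi r_B^\eps \cdot r_B^\eps$ yields $\tfrac12 \int_0^B F\langle \sin\phi\,(r_B^\eps)^2\rangle\, d\eta$, which by \eqref{eq:Fin4} is controlled by $\tfrac12 \|F\|_\infty \int_0^B \langle (r_B^\eps)^2\rangle\, d\eta$ and is absorbed into the $\int\langle (r_B^\eps)^2\rangle\, d\eta$ term on the left once $\eps\le 1/4$ and $\delta$ is chosen so that $\tfrac{4\eps}{4-3\delta}\le 1$.

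For the weighted identity \eqref{eq:est/B2} I would repeat the argument with test function $e^{2\alpha\eta} r_B^\eps$; the only new contribution is $-\alpha\int_0^B e^{2\alpha\eta}\langle \sin\phi (r_B^\eps)^2\rangle\, d\eta$ from differentiating the weight, which combined with the curvature term is dominated by $(\alpha+\tfrac12 \|F\|_\infty)\int_0^B e^{2\alpha\eta}\langle (r_B^\eps)^2\rangle\, d\eta$. For $\alpha\le 1/4$ and $\eps\le 1/4$ (with $\delta$ small) this stays $\le 3/4$, which yields the prefactor $\tfrac14$ in \eqref{eq:est/B2}. The $L^2$ bounds \eqref{eq:est/B3} are then immediate: $(G_B^\eps)^4$ being $\phi$-independent, $\omega_B^\eps$ coincides with the mean-zero projection of $r_B^\eps$ so $\|\omega_B^\eps\|_{L^2}\le \|r_B^\eps\|_{L^2}$, while Cauchy--Schwarz applied to $\partial_\eta G_B^\eps=\langle \sin\phi\, r_B^\eps\rangle$ controls $\|\partial_\eta G_B^\eps\|_{L^2([0,B])}$ by $\|r_B^\eps\|_{L^2}$. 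Finally, for \eqref{eq:est/B4}, writing $G_B^\eps(\eta)=G_b+\int_0^\eta \partial_\eta G_B^\eps\, d\eta'$ and applying Cauchy--Schwarz gives the $\sqrt{\eta}$ bound on $G_B^\eps$, whereas the identity $q_B^\eps=(G_B^\eps)^4 - \tfrac{1}{2\pi}(\partial_\eta I_1 + F I_1)$ with $I_1=\partial_\eta G_B^\eps=\langle \sin\phi\, \omega_B^\eps\rangle$, together with $|I_1(\eta)|\le \sqrt{\pi}\|\omega_B^\eps(\eta,\cdot)\|_{L^2}$, $|F|\le C\eps$, and the pointwise $L^\infty$ bound $G_B^\eps\le \gamma$ from Lemma \ref{lem.B}, gives the bound on $q_B^\eps$.

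The main obstacle is the bookkeeping for the geometric correction: one must absorb the curvature term $\tfrac12 F\langle \sin\phi\,(r_B^\eps)^2\rangle$ simultaneously with the weight-derivative term $\alpha\langle \sin\phi\,(r_B^\eps)^2\rangle$ at the sharp constants required, while making sure no new sign-indefinite boundary term appears at $\eta=B$. This is exactly where the $L^\infty$ bound on $F$ together with the $\phi$-periodic structure become essential, and where the joint smallness restrictions $\eps\le 1/4$ and $\alpha\le 1/4$ are used.
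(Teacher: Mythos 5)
Your treatment of \eqref{eq:est/B1}, \eqref{eq:est/B2}, \eqref{eq:est/B3} and the bound on $G_B^\eps$ in \eqref{eq:est/B4} is correct and is essentially the paper's argument: the compatibility identity $\partial_\eta G_B^\eps=\langle\sin\phi\,\varphi_B^\eps\rangle$ is derived the same way (the ODE $m'+Fm=0$ with $m(B)=0$), and testing the transport equation with $r_B^\eps=\varphi_B^\eps-(G_B^\eps)^4$ is algebraically equivalent to the paper's choice of multiplying \eqref{eq:B1} by $G^4$ and \eqref{eq:B2} by $\varphi$ and adding; the absorption of $\tfrac12 F\sin\phi$ and $-\alpha\sin\phi$ using \eqref{eq:Fin4} with $\eps,\alpha\le 1/4$ matches the paper's \eqref{eq:lm6/F}--\eqref{eq:alpha/e}, and the decomposition $\int_{-\pi}^\pi r^2\,d\phi=\int_{-\pi}^\pi\omega^2\,d\phi+2\pi(q-G^4)^2$ gives \eqref{eq:est/B3} exactly as in the paper.

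There is, however, a genuine gap in your derivation of the pointwise bound on $q_B^\eps$ in \eqref{eq:est/B4}. The identity $q=G^4-\tfrac{1}{2\pi}(\partial_\eta I_1+FI_1)$ with $I_1=\partial_\eta G$ is nothing but equation \eqref{eq:B1} rewritten: since $\partial_\eta I_1+FI_1=\partial_\eta^2G+F\partial_\eta G=-\langle\varphi-G^4\rangle=-2\pi(q-G^4)$, the identity collapses to $q=q$ and carries no information. To extract a bound on $q(\eta)$ from it you would need a \emph{pointwise} bound on $\partial_\eta I_1(\eta)=\partial_\eta^2 G(\eta)$, which none of your stated ingredients ($|I_1|\le\sqrt{\pi}\|\omega(\eta,\cdot)\|_{L^2}$, $|F|\le C\eps$, $0\le G\le\gamma$) provides; worse, such a bound is equivalent, via the equation itself, to the bound on $q-G^4$ you are trying to prove, so the argument is circular. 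The paper closes this step by a different device: multiply \eqref{eq:B2} by $\sin\phi$ and average in $\phi$ to get
\begin{align*}
\frac{d}{d\eta}\langle\sin^2\phi\,\varphi\rangle=F(\eps;\eta)\langle\cos(2\phi)\,\omega\rangle-\langle\sin\phi\,\omega\rangle,
\end{align*}
integrate over $[0,\eta]$, and use Cauchy--Schwarz together with $\|F\|_{L^2}\le C(\delta)$ and $\|\omega\|_{L^2}\le C$ to obtain $|\langle\sin^2\phi\,\varphi(\eta,\cdot)\rangle|\le C(1+\sqrt{\eta})$ (the initial value $\langle\sin^2\phi\,\varphi(0,\cdot)\rangle$ being controlled by the inflow data and the outgoing trace term in \eqref{eq:est/B1}); the conclusion then follows from $\langle\sin^2\phi\,\varphi\rangle=\pi q+\langle\sin^2\phi\,\omega\rangle$. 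You need this (or an equivalent moment argument) to establish the $q_B^\eps$ estimate.
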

\begin{proof}
	For simplicity of notations, we drop the superscript $\eps$ and the subscript $B$. 

	\emph{\textbf{The energy estimate.}}
	We first multiply \eqref{eq:B1} by $G^4$ and \eqref{eq:B2} by $\varphi$, and integrate to get 
	\begin{align} \label{eq:lm6/1}
		 &- \int_0^B G^4 \partial_\eta^2 G d\eta + \int_0^B \int_{-\pi}^\pi \sin\phi \partial_\eta \frac{\varphi^2}{2} d\phi d\eta + \int_0^B \int_{-\pi}^\pi (\varphi-G^4)^2 d\phi d\eta \nonumber\\
		 &\quad - \int_0^B F(\eps;\eta) G^4 \partial_\eta G d\eta + \int_0^B \int_{-\pi}^\pi F(\eps;\eta)\cos\phi \varphi \partial_\phi \varphi d\phi d\eta = 0.
 	\end{align}
	Using integration-by-parts and the boundary condition \eqref{eq:B1b}, we obtain 
	\begin{align} \label{eq:lm6/2}
		- \int_0^B G^4 \partial_\eta^2 G d\eta = -G^4 \partial_\eta G\bigg|_0^B + \int_0^B 4G^3 |\partial_\eta G|^2 d\eta = G^4(0)\partial_\eta G(0) +  \int_0^B 4G^3 |\partial_\eta G|^2 d\eta.
	\end{align}
	Using integration-by-parts and the boundary condition \eqref{eq:B2b}, we obtain 
	\begin{align} \label{eq:lm6/3}
		\int_0^B \int_{-\pi}^\pi \sin\phi \partial_\eta \frac{\varphi^2}{2} d\phi d\eta = \int_{-\pi}^\pi \sin\phi \frac{\varphi^2}{2} d\phi \bigg|_0^B = - \frac12 \langle \sin\phi \varphi^2(0,\phi) \rangle .
	\end{align}
	Comparing \eqref{eq:B1} and $\langle \eqref{eq:B2} \rangle$, it holds that 
	\begin{align*}
		\partial_\eta(\partial_\eta G - \langle \sin\phi \varphi\rangle) = -F(\eps;\eta) (\partial_\eta G-\langle \sin \phi \varphi\rangle),
	\end{align*}
	which with $\partial_\eta G(B) - \langle \sin\phi \varphi(B,\phi)\rangle=0$, implies that 
	\begin{align}\label{eq:G=}
		\partial_\eta G(\eta) - \langle \sin\phi \varphi(\eta,\phi)\rangle = 0,\quad \text{ for any }\eta\in[0,B].
	\end{align}
	Therefore, 
	\begin{align*}
		\partial_\eta G(0) = \langle \sin\phi \varphi(0,\phi) \rangle,
	\end{align*}
	and 
	\begin{align} \label{eq:lm6/4}
		G^4(0)\partial_\eta G(0) - \frac12 \langle \sin\phi \varphi^2(0,\phi)\rangle &= G^4(0)\sin\phi \varphi(0,\phi) \rangle - \frac12 \langle \sin\phi \varphi^2(0,\phi)\rangle \nonumber\\
		&= -\frac12 \langle \sin\phi (\varphi(0,\phi)-G^4(0))^2 \rangle,
	\end{align}
	due to $\langle \sin\phi G^4(0)\rangle =0.$ 

	Using \eqref{eq:G=}, we also have 
	\begin{align*}
		&-\int_0^B F(\eps;\eta) G^4 \partial_\eta G d\eta + \int_0^B\int_{-\pi}^\pi F(\eps;\eta) \cos\phi \varphi\partial_\phi \varphi d\phi d\eta \nonumber\\
		&\quad = -\int_0^B F(\eps;\eta) G^4 \partial_\eta G d\eta + \int_0^B \int_{-\pi}^\pi F(\eps;\eta) \sin\phi \frac{\varphi^2}{2} d\phi d\eta \nonumber\\
		&\quad = -\int_0^B \int_{-\pi}^\pi F(\eps;\eta) G^4 \sin\phi \varphi d\phi d\eta + \int_0^B \int_{-\pi}^\pi F(\eps;\eta) \sin\phi \frac{\varphi^2}{2} d\phi d\eta \nonumber\\
		&\quad = \frac12 \int_0^B\int_{-\pi}^\pi F(\eps;\eta) \sin\phi (\varphi-G^4)^2 d\phi d\eta.
	\end{align*}
	Combing this above relation with \eqref{eq:lm6/1}, \eqref{eq:lm6/2},  \eqref{eq:lm6/3}, \eqref{eq:lm6/4} gives 
	\begin{align*}
		&\int_0^B 4G^3 |\partial_\eta G|^2 d\eta + \int_0^B \int_{-\pi}^\pi(\varphi-G^4)^2 d\phi d\eta +  \frac12 \int_0^B\int_{-\pi}^\pi F(\eps;\eta) \sin\phi (\varphi-G^4)^2 d\phi d\eta \nonumber\\
		&\quad +  \frac12 \langle \sin\phi (\varphi(0,\phi)-G^4(0))^2 \rangle = 0.
	\end{align*}
	The last term on the left side of the above equation can be rewritten using the boundary conditon \eqref{eq:B2b} as 
	\begin{align*}
		\frac12 \langle \sin\phi (\varphi(0,\phi)-G^4(0))^2 \rangle &= \frac12 \int_{\sin\phi>0} \sin\phi (\varphi(0,\phi)-G^4(0))^2  d\phi + \frac12 \int_{\sin\phi<0} \sin\phi (\varphi(0,\phi)-G^4(0))^2  d\phi \nonumber\\
		&=\frac12 \int_{\sin\phi>0} \sin\phi (\varphi_b(\phi)-G^4(0))^2  d\phi - \frac12 \int_{\sin\phi<0} |\sin\phi| (\varphi(0,\phi)-G^4(0))^2  d\phi.
	\end{align*}
	Taking this into the previous equation, we arrive at the following energy equality: 
	\begin{align}\label{eq:lm6/E}
		&\int_0^B 4G^3 |\partial_\eta G|^2 d\eta + \int_0^B \int_{-\pi}^\pi(\varphi-G^4)^2 d\phi d\eta +  \frac12 \int_0^B\int_{-\pi}^\pi F(\eps;\eta) \sin\phi (\varphi-G^4)^2 d\phi d\eta \nonumber\\
		&\quad + \frac12 \int_{\sin\phi<0} |\sin\phi| (\varphi(0,\phi)-G^4(0))^2  d\phi = \frac12 \int_{\sin\phi>0} \sin\phi (\varphi_b(\phi)-G^4(0))^2  d\phi.
	\end{align}
	Note due to \eqref{eq:Fin4}, $\|F\|_{L^\infty}\le \tfrac{4}{4-3\delta}\eps$, hence for $\delta\le 1$,
	\begin{align}\label{eq:lm6/F}
		|F(\eps;\eta)\sin\phi| \le \frac{4}{4-3\delta} \eps \le 4\eps,
	\end{align}
	and for $\eps \le 1/4$,
	\begin{align*}
		1+\frac12 F(\eps;\eta)\sin\phi \ge 1- 2\eps \ge \frac12.
	\end{align*}
	The energy equality \eqref{eq:lm6/E} thus implies \eqref{eq:est/B1}.

	\bigskip

	\emph{\textbf{The weighted estimate.}} Let $\alpha>0$ be a positive constant. We multiply \eqref{eq:B1} by $e^{2\alpha \eta} G^4$ and \eqref{eq:B2} by $e^{2\alpha \eta}\varphi$ and integrate to get 
	\begin{align} \label{eq:lm6/5}
		&- \int_0^B e^{2\alpha \eta} G^4 \partial_\eta^2 G d\eta + \int_0^B \int_{-\pi}^\pi e^{2\alpha \eta}\sin\phi \partial_\eta \frac{\varphi^2}{2} d\phi d\eta + \int_0^B \int_{-\pi}^\pi e^{2\alpha\eta}(\varphi-G^4)^2 d\phi d\eta \nonumber\\
		&\quad - \int_0^B e^{2\alpha\eta} F(\eps;\eta) G^4 \partial_\eta G d\eta + \int_0^B \int_{-\pi}^\pi e^{2\alpha\eta } F(\eps;\eta)\cos\phi \varphi \partial_\phi \varphi d\phi d\eta = 0.
	\end{align}
	Using integration-by-parts and the boundary conditions \eqref{eq:B1b}-\eqref{eq:B2b}, we obtain 
	\begin{align*}
		&- \int_0^B e^{2\alpha \eta} G^4 \partial_\eta^2 G d\eta + \int_0^B \int_{-\pi}^\pi e^{2\alpha \eta}\sin\phi \partial_\eta \frac{\varphi^2}{2} d\phi d\eta \nonumber\\
		&\quad =- e^{2\alpha\eta} G^4 \partial_\eta G\bigg|_0^B +\int_0^B e^{2\alpha \eta} 4 G^3|\partial_\eta G|^2 d\eta + \int_0^B 2\alpha e^{2\alpha\eta} G^4 \partial_\eta G d\eta + \frac12 \int_{-\pi}^{\pi} e^{2\alpha\eta} \sin\phi \varphi^2 d\phi \bigg|_0^B \nonumber\\
		&\qquad - \frac12 \int_0^B\int_{-\pi}^{\pi} 2\alpha e^{2\alpha\eta} \sin\phi \varphi^2 d\phi d\eta \nonumber\\
		&\quad =  G^4(0) \partial_\eta G(0)+\int_0^B e^{2\alpha \eta} 4 G^3|\partial_\eta G|^2 d\eta + \int_0^B 2\alpha e^{2\alpha\eta} G^4 \partial_\eta G d\eta - \frac12 \int_{-\pi}^{\pi}  \sin\phi \varphi^2(0,\phi) d\phi \nonumber\\
		&\qquad - \frac12 \int_0^B\int_{-\pi}^{\pi} 2\alpha e^{2\alpha\eta} \sin\phi \varphi^2 d\phi d\eta \nonumber\\
		&\quad = -\frac12 \langle \sin\phi (\varphi(0,\phi) - G_b^4)^2\rangle + \int_0^B e^{2\alpha \eta} 4 G^3|\partial_\eta G|^2 d\eta -\alpha \int_0^B\int_{-\pi}^\pi \alpha e^{2\alpha\eta} \sin\phi (\varphi-G^4)^2 \phi d\eta \nonumber\\
		&\quad = -\frac12 \int_{\sin\phi>0} \sin\phi (\varphi_b(\phi)-G_b^4)^2 d\phi + \frac12 \int_{\sin\phi<0} |\sin\phi|(\varphi(0,\phi)-G_b^4)^2 d\phi + \int_0^B e^{2\alpha \eta} 4 G^3|\partial_\eta G|^2 d\eta \nonumber\\
		&\qquad -\alpha \int_0^B\int_{-\pi}^\pi  e^{2\alpha\eta} \sin\phi (\varphi-G^4)^2 \phi d\eta.
	\end{align*}
	Using integration-by-parts and the relation \eqref{eq:G=}, we get 
	\begin{align*}
		& - \int_0^B e^{2\alpha\eta} F(\eps;\eta) G^4 \partial_\eta G d\eta + \int_0^B \int_{-\pi}^\pi e^{2\alpha\eta } F(\eps;\eta)\cos\phi \varphi \partial_\phi \varphi d\phi d\eta \nonumber\\
		&\quad =  - \int_0^B e^{2\alpha\eta} F(\eps;\eta) G^4 \partial_\eta G d\eta + \frac12 \int_0^B \int_{-\pi}^\pi e^{2\alpha\eta } F(\eps;\eta)\sin\phi   \varphi^2 d\phi d\eta \nonumber\\
		&\quad = \frac12 \int_0^B \int_{-\pi}^\pi e^{2\alpha\eta} F(\eps;\eta) \sin\phi(\varphi-G^4)^2 d\phi d\eta .
	\end{align*}
	Taking the above two equations into \eqref{eq:lm6/5} gives 
	\begin{align*}
		&\int_0^B e^{2\alpha\eta} 4G^3 |\partial_\eta G|^2 d\eta + \int_0^B \int_{-\pi}^\pi e^{2\alpha\eta} (\varphi-G^4)^2 d\phi d\eta +  \frac12 \int_0^B\int_{-\pi}^\pi e^{2\alpha\eta} F(\eps;\eta) \sin\phi (\varphi-G^4)^2 d\phi d\eta \nonumber\\
		&\quad -\alpha \int_0^B\int_{-\pi}^\pi  e^{2\alpha\eta} \sin\phi (\varphi-G^4)^2 \phi d\eta+ \frac12 \int_{\sin\phi<0} |\sin\phi| (\varphi(0,\phi)-G^4_b)^2  d\phi \nonumber\\
		&\qquad = \frac12 \int_{\sin\phi>0} \sin\phi (\varphi_b - G_b^4)^2 d\phi.
	\end{align*}
	Using \eqref{eq:lm6/F} and for $\eps\le 1/4$ and $0<\alpha\le \tfrac14$ small,
	\begin{align}\label{eq:alpha/e}
		\left|1 + \frac12 F(\eps;\eta)\sin\phi - \alpha \right| \ge 1 - 2\eps - \alpha \ge \frac14.
	\end{align}
	Taking this into the previous inequality leads to the estimate \eqref{eq:est/B2}.

	\emph{\textbf{Boundness of $\varphi$.}} Note that from the estimates \eqref{eq:est/B1} and \eqref{eq:est/B2}, $\varphi-G^4$ is in $L^2$ but we lack an estimate on $\varphi$ or $G$. However, from the definition of $\omega$,
	 $\langle \omega\rangle=0$ and then
	\begin{align}\label{eq:omega2}
		\int_{-\pi}^\pi (\varphi - G^4)^2 d\phi = \int_{-\pi}^\pi \left( \omega + q - G^4\right)^2 d\phi = \int_{-\pi}^\pi \omega^2 d\phi + 2\pi (q-G^4)^2.
	\end{align}
	Hence we get from \eqref{eq:est/B1} and \eqref{eq:est/B2} that 
	\begin{align*}
		\|\omega\|_{L^2([0,B]\times[-\pi,\pi))}^2=\int_{0}^B \int_{-\pi}^\pi \omega^2 d\phi d\eta \le \int_{\sin\phi>0}(\varphi_b-G_b^4)^2 d\phi,
	\end{align*}
	and 
	\begin{align*}
		\|e^{\alpha\eta}\omega\|_{L^2([0,B]\times[-\pi,\pi))}=\int_{0}^B \int_{-\pi}^\pi e^{2\alpha\eta}\omega^2 d\phi d\eta \le 2\int_{\sin\phi >0 }(\varphi_b-G_b^4)^2 d\phi.
	\end{align*}
	Next we derive estimate on $q$. Multiplying \eqref{eq:B2} by $\sin\phi$ and integrate over $[-\pi,\pi)$ gives 
	\begin{align*}
		\frac{d}{d\eta}\langle \sin^2\phi \varphi \rangle &= - F(\eps;\eta)\langle \cos\phi \sin\phi \partial_\phi \varphi \rangle  - \langle (\varphi-G^4) \sin\phi  \rangle \nonumber\\
		&= \frac12 F(\eps;\eta) \langle \partial_\phi \sin(2\phi) \varphi \rangle - \langle \sin\phi \varphi \rangle \\
		&= F(\eps;\eta) \langle \cos(2\phi) \omega \rangle - \langle \sin\phi \omega \rangle.
	\end{align*}
	Integrating over $[0,\eta]$ and using \eqref{eq:Fin3} gives 
	\begin{align*}
		&|\langle \sin^2\phi \varphi(\eta,\cdot)\rangle - \langle \sin^2\phi \varphi(0,\cdot)\rangle| \\
		&\quad= \int_0^\eta (F(\eps;\tau) \langle \cos(2\phi) \omega(\tau,\cdot)\rangle - \langle \sin\phi \omega(\tau,\cdot)\rangle) d\tau \nonumber\\
		&\quad \le \left(\int_0^\eta \int_{-\pi}^\pi \cos^2 (2\phi)F^2(\eps;\tau)d\phi d\eta\right)^{1/2} \left(\int_0^\eta \int_{-\pi}^\pi \omega^2d\phi d\eta\right)^{1/2} \nonumber\\
		&\qquad + \left(\int_0^\eta \int_{-\pi}^\pi \sin^2\phi d\phi d\eta \right)^{1/2} \left(\int_0^\eta \int_{-\pi}^\pi \omega^2d\phi d\eta\right)^{1/2} \nonumber\\
		&\le \sqrt{\pi} \|F\|_{L^2([0,\eta])} \|\omega\|_{L^2([0,B]\times[-\pi,\pi))} + \sqrt{\pi}\sqrt{\eta} \|\omega\|_{L^2([0,B]\times[-\pi,\pi))} \nonumber \\
		&\le (\frac{3\delta}{4-3\delta}  \sqrt{\pi} + \sqrt{\pi}\sqrt{\eta})\|\omega\|_{L^2([0,B]\times[-\pi,\pi))} \nonumber \\
		&\le C(\delta)(1+\sqrt{\eta})  \left(\int_{\sin\phi>0}(\varphi_b-G_b^4)^2 d\phi\right)^{1/2}.
	\end{align*}
	Since 
	\begin{align*}
		|\langle \sin^2\phi \varphi(0,\cdot)\rangle| &= \left|\int_{\sin\phi>0} \sin^2\phi \varphi_b d\phi + \int_{\sin\phi<0}\sin^2\phi \varphi(0,\phi)d\phi\right| \nonumber\\
		&\le \int_{\sin\phi>0} \sin^2\phi \varphi_b d\phi  + \left(\int_{\sin\phi<0} |\sin \phi|^{\tfrac94} d\phi \right)^{1/2} \left(\int_{\sin\phi<0}|\sin\phi| \varphi^2(0,\phi) d\phi \right)^{1/2} \nonumber\\
		&\le \int_{\sin\phi>0} \sin^2\phi \varphi_b d\phi + C \left(\int_{\sin\phi>0}(\varphi_b-G_b^4)^2 d\phi\right)^{1/2},
	\end{align*}
	we get from the previous inequality that 
	\begin{align*}
		|\langle \sin^2\phi \varphi(\eta,\cdot)\rangle| &\le |\langle \sin^2\phi \varphi(0,\cdot)\rangle| +  C(\delta)(1+\sqrt{\eta})  \left(\int_{\sin\phi>0}(\varphi_b-G_b^4)^2 d\phi\right)^{1/2} \nonumber\\
		&\le C(\delta,\varphi_b,G_b)(1+\sqrt{\eta}),
	\end{align*}
	with $C(\delta,\varphi_b,G_b)>0$ is a positive constant depending on $\delta,\varphi_b,G_b$. Since 
	\[
	\langle \sin^2\phi \varphi(\eta,\cdot)\rangle = \langle \sin^2 \phi q(\eta) \rangle + \langle \sin^2\phi \omega(\eta,\cdot)\rangle = \pi q(\eta) +\langle \sin^2\phi \omega(\eta,\cdot)\rangle  
	\]
	and 
	$$|\langle \sin^2\phi \omega(\eta,\cdot)\rangle| \le \|\sin^2\phi\|_{L^2([-\pi,\pi))}\|\omega(\eta,\cdot)\|_{L^2([-\pi,\pi))} \le C \|\omega(\eta,\cdot)\|_{L^2([-\pi,\pi))},$$ 
	we obtain 
	\begin{align*}
		|q(\eta)| \le C(\delta,\varphi_b,G_b)(1+\sqrt{\eta}+ \|\omega(\eta,\cdot)\|_{L^2([-\pi,\pi))})
	\end{align*}
	for any $\eta \in [0,B]$. Similarly, using the relation \eqref{eq:G=}, we can get 
	\begin{align*}
		|G(\eta)| &= \left|G_b+ \int_0^\eta \partial_\eta G d\eta\right| \nonumber\\
		&\le G_b + \int_0^\eta | \langle \sin\phi \varphi \rangle | d\eta \nonumber\\
		&\le G_b + \int_0^\eta |\langle \sin\phi \omega \rangle| d\eta  \nonumber\\
		&\le G_b + \left(\int_0^\eta \int_{-\pi}^\pi \sin^2\phi d\phi d\eta\right)^{1/2} \|\omega\|_{L^2([0,\eta]\times [-\pi,\pi))} \nonumber\\
		&\le G_b + C \sqrt{\eta} \left(\int_{\sin\phi>0}(\varphi_b-G_b^4)^2 d\phi\right)^{1/2} \nonumber\\
		&\le C(G_b,\varphi_b)(1+\sqrt{\eta}),
	\end{align*}
	which finishes the proof.
\end{proof}


	\subsection{Existence on the half line} Next we show the existence of solutions for system \eqref{eq:n1}-\eqref{eq:n2}. 
	\begin{proof}[Proof of Theorem \ref{thm.nm}]
		From \eqref{eq:est/B3} and \eqref{eq:est/B4}, $\varphi_B^\eps = q_B^\eps + \omega_B^\eps \in L^2_{\rm loc}([0,\infty)\times [-\pi,\pi))$ and $G_B^\eps \in L^2_{\rm loc}([0,\infty))$, thus there exists a subsequence $\{B_k\}_{k=1}^\infty$ such that 
		\begin{align*}
			&G_{B_k}^\eps \rightharpoonup G^\eps,\quad \text{weakly in } L^2_{\rm loc}([0,\infty)), \\
			&\varphi^\eps_{B_k} \rightharpoonup \varphi^\eps,\quad \text{weakly in }L^2_{\rm loc}([0,\infty)\times [-\pi,\pi)).
		\end{align*}
		 By the estimate \eqref{eq:est/B3}, $\|\partial_\eta G_B^\eps\|_{L^2([0,B])}$ is uniformly bounded, hence by trace theorem we get $G^\eps(0)=G_B^\eps(0)=G_b$ holds. Moreover, from \eqref{eq:est/B1}, 
		\begin{align*}
			\sin\phi \partial_\eta \varphi_B^\eps + F(\eps;\eta) \cos\phi \partial_\phi \varphi_B^\eps \in L^2([0,B]\times [-\pi,\pi)),
		\end{align*}
		by the the Ukai's trace theorem \cite[Theorem 5.1]{ukai1986solutions}, $\varphi^\eps$ is well defined on the boundary and 
		\begin{align*}
			\varphi^\eps(0,\phi) = \varphi_b(\phi),\quad \text{for }\sin\phi>0.
		\end{align*}
		Since $\|G^\eps_B\|_{H^1([0,B])}$ is bounded for any finite $B$, the Sobolev embedding theorem implies that $\|G^\eps_B\|_{C([0,B])}$ is bounded and thus as $B\to 0$, 
		\begin{align*}
			(G_B^\eps)^4 \to (G^\eps)^4, \quad \text{uniformly in } C([0,B]).
		\end{align*}
		Therefore,  $(G^\eps,\phi^\eps)$ solves system \eqref{eq:n1}-\eqref{eq:n2} with boundary conditions \eqref{eq:n1b}-\eqref{eq:n2b}. Moreover, due to $0\le G^\eps_B \le \gamma$ and $0\le \varphi^\eps_B \le \gamma^4$, $0\le G^\eps \le \gamma$ and $0\le \varphi^\eps \le \gamma^4$ also hold.

		Next we show the uniform boundness of the solutions. First, by the lower semi-continuity of the $L^2$ norm, \eqref{eq:est/B1} and \eqref{eq:est/B2} implies \eqref{eq:est/n1} and \eqref{eq:est/n2}.

		From the relation \eqref{eq:G=}, $\partial_\eta G^\eps_B = \langle \sin\phi \varphi_B^\eps \rangle$, we can pass to the weak limit $B\to\infty$ and get that
		\begin{align*}
			\partial_\eta G^\eps = \langle \sin\phi \varphi^\eps \rangle
		\end{align*}
		holds in the weak sense in $L^2_{\rm loc}([0,\infty))$. Therefore, 
		\begin{align*}
			\int_0^\infty |\partial_\eta G^\eps|^2 d\eta &= \int_0^\infty |\langle \sin\phi \varphi^\eps \rangle|^2 d\eta = \int_0^\infty |\langle \sin\phi (\varphi^\eps - (G^\eps)^4) \rangle|^2 d\eta \nonumber\\
			&\le  \int_0^\infty \left(\int_{-\pi}^\pi \sin^2\phi d\phi \int_{-\pi}^\pi (\varphi^\eps-(G^\eps)^4)^2 d\phi\right) d\eta \le \pi\int_{\sin\phi>0} \sin\phi (\varphi_b(\phi)-G_b^4)^2  d\phi,
		\end{align*}
		and 
		\begin{align*}
			&\int_0^\infty e^{2\alpha\eta} |\partial_\eta G^\eps|^2 d\eta = \int_0^\infty  e^{2\alpha\eta} |\langle \sin\phi \varphi^\eps \rangle |^2 d\eta = \int_0^\infty e^{2\alpha\eta}  |\langle \sin\phi (\varphi^\eps - (G^\eps)^4) \rangle|^2d\eta \nonumber\\
			&\quad \le \pi \int_0^\infty \int_{-\pi}^\pi e^{2\alpha\eta} (\varphi^\eps-(G^\eps)^4)^2 d\phi d\eta\le 2\pi \int_{\sin\phi>0} \sin\phi (\varphi_b(\phi)-G_b^4)^2  d\phi.
		\end{align*}
		Therefore, 
		\begin{align*}
			G^\eps(\eta) &= G_b + \int_0^\eta \partial_\eta G^\eps d \eta \nonumber\\
			&= G_b + \int_0^\eta e^{-\alpha\eta} e^{\alpha\eta} \partial_\eta G^\eps d\eta \le G_b + \left(\int_0^\eta e^{-2\alpha\eta} d\eta \right)^{1/2} \left(\int_0^\eta e^{2\alpha\eta}|\partial_\eta G^\eps|^2 d\eta\right)^{1/2} \nonumber\\
			&\le G_b +  \frac{\sqrt{1-e^{-2\alpha\eta}}}{\sqrt{2\alpha}} \sqrt{2\pi} \left(\int_{\sin\phi>0} \sin\phi (\varphi_b(\phi)-G_b^4)^2  d\phi\right)^{1/2}.
		\end{align*}
		Hence as $\eta\to \infty$, $G^\eps(\eta)$ is uniformly bounded, and so $\lim_{\eta\to\infty}G^\eps(\eta)=:G^\eps_{\infty}$ exists. Moreover,
		\begin{align*}
			|G^\eps(\eta) - G_\infty^\eps| &= \left|-\int_{\eta}^\infty \partial_\eta G^\eps d\eta \right| = \left|\int_\eta^\infty e^{-\alpha\eta} e^{\alpha\eta} \partial_\eta G^\eps d\eta \right| \nonumber\\
			&\le \left(\int_\eta^\infty e^{-2\alpha\eta} d\eta\right)^{1/2} \left(\int_\eta^\infty e^{2\alpha\eta} |\partial_\eta G^\eps|^2 d\eta \right)^{1/2} \nonumber\\
			&\le \frac{e^{-\alpha\eta}}{\sqrt{2\alpha}} \left(\int_0^\infty e^{2\alpha\eta} |\partial_\eta G^\eps|^2 d\eta \right)^{1/2} \nonumber\\
			&\le e^{-\alpha\eta}\frac{\sqrt{\pi}}{\sqrt{\alpha}} \left(\int_{\sin\phi>0} \sin\phi (\varphi_b(\phi)-G_b^4)^2  d\phi\right)^{1/2}.
		\end{align*}

		Finally, we show $\varphi^\eps - (G^\eps_\infty)^4$ decays exponentially to $0$ as $\eta\to 0$. We use Lemma \ref{lem:formula_inf} in Appendix \ref{sec:apA} to get this. Note that from the exponential decay of $G^\eps$,
		\begin{align*}
			|(G^\eps(\eta))^4 - (G^\eps_{\infty})^4| &\le |G^\eps(\eta) - G^\eps_{\infty}| \cdot |(G^\eps(\eta))^3 - (G^\eps(\eta))^2G^\eps_{\infty} + G^\eps(\eta) (G^\eps_\infty)^2 - (G^\eps_\infty)^3| \nonumber\\
			&\quad \le e^{-\alpha\eta}\frac{\sqrt{\pi}}{\sqrt{\alpha}} \left(\int_{\sin\phi>0} \sin\phi (\varphi_b(\phi)-G_b^4)^2  d\phi\right)^{1/2} \cdot 4 \gamma^3.
		\end{align*}
		Let $C_b:= \frac{\sqrt{\pi}}{\sqrt{\alpha}} \left(\int_{\sin\phi>0} \sin\phi (\varphi_b(\phi)-G_b^4)^2  d\phi\right)^{1/2} \cdot 4 \gamma^3$, then $|(G^\eps(\eta))^4 - (G^\eps_{\infty})^4| \le C_b e^{-\alpha\eta}.$
		First, for $\sin\phi>0$, from \eqref{eq:varphi--1}, 
		\begin{align*}
			&|\varphi^\eps(\eta,\phi) -(G^\eps_\infty)^4 | \\
			&\quad= \left|\varphi_b(\phi'(\phi,\eta,0)) e^{-\int_0^\eta \frac{1}{\sin \phi'(\phi,\eta,\xi)} d\xi} + \int_0^\eta \frac{(G^\eps(\xi))^4}{\sin \phi'(\phi,\eta,\xi)} e^{-\int_\xi^\eta \frac{1}{\sin \phi'(\phi,\eta,\rho)}d\rho} d\xi - (G^\eps_\infty)^4\right| \nonumber\\
			&\quad= \left|(\varphi_b(\phi'(\phi,\eta,0)) - (G_\infty^\eps)^4) e^{-\int_0^\eta \frac{1}{\sin \phi'(\phi,\eta,\xi)} d\xi} + \int_0^\infty\frac{(G^\eps(\xi))^4 - (G_\infty^\eps)^4}{\sin \phi'(\phi,\eta,\xi)} e^{-\int_\xi^\eta \frac{1}{\sin \phi'(\phi,\eta,\rho)}d\rho} d\xi \right| \nonumber\\
			&\quad \le |\varphi_b(\phi'(\phi,\eta,0)) - (G_\infty^\eps)^4 | e^{-\eta} \nonumber\\
			&\qquad + \frac{\sqrt{\pi}}{\sqrt{\alpha}} \left(\int_{\sin\phi>0} \sin\phi (\varphi_b(\phi)-G_b^4)^2  d\phi\right)^{1/2} \cdot 4 \gamma^3 \int_0^\eta \frac{e^{-\alpha\xi}}{\sin \phi'(\phi,\eta,\xi)} e^{-\int_\xi^\eta \frac{1}{\sin \phi'(\phi,\eta,\rho)}d\rho} d\xi.
		\end{align*}
		Using integration-by-parts, we obtain 
		\begin{align*}
			\int_0^\eta \frac{e^{-\alpha\xi}}{\sin \phi'(\phi,\eta,\xi)} e^{-\int_\xi^\eta \frac{1}{\sin \phi'(\phi,\eta,\rho)}d\rho} d\xi &= \int_0^\eta e^{-\alpha\xi} d e^{-\int_\xi^\eta \frac{1}{\sin \phi'(\phi,\eta,\rho)}d\rho} \nonumber\\
			&= e^{-\alpha\xi} e^{-\int_\xi^\eta \frac{1}{\sin \phi'(\phi,\eta,\rho)}d\rho}\big|_0^\eta + \alpha \int_0^\eta e^{-\int_\xi^\eta \frac{1}{\sin \phi'(\phi,\eta,\rho)}d\rho} e^{-\alpha\xi} d\xi \nonumber\\
			&\le e^{-\alpha \eta} - e^{-\int_0^\eta \frac{1}{\sin \phi'(\phi,\eta,\rho)}d\rho} + \alpha \int_0^\eta e^{-(\eta-\xi)}e^{-\alpha\xi}d\xi \nonumber\\
			&\le e^{-\alpha \eta} + \frac{\alpha}{1-\alpha} (e^{-\alpha\eta}-e^{-\eta}) \le \frac{1}{1-\alpha} e^{-\alpha\eta}.
		\end{align*}
		Taking this into the previous inequality leads to 
		\begin{align*}
			&|\varphi^\eps(\eta,\phi) -(G^\eps_\infty)^4 | \nonumber\\
			&\quad \le  |\varphi_b(\phi'(\phi,\eta,0)) - (G_\infty^\eps)^4 | e^{-\eta} + e^{-\alpha\eta}\frac{4\gamma^3\sqrt{\pi}}{\sqrt{\alpha}\left(\int_{\sin\phi>0} \sin\phi (\varphi_b(\phi)-G_b^4)^2  d\phi\right)^{1/2}(1-\alpha)},
		\end{align*}
		for any $\eta \ge 0$ and $\sin\phi>0$.

		Similarly, for $\sin\phi<0$ and $|E(\phi,\eta)|\le e^{-V_\infty}$, we use \eqref{eq:varphi--2} and get 
		\begin{align*}
			\varphi^\eps(\eta,\phi) &= \varphi_b(\phi'(-\phi,\eta,0)) e^{-\int_0^\infty + \int_\eta^\infty \frac{1}{\sin\phi'(-\phi,\eta,\xi)}d\xi} + \int_0^\infty \frac{(G^\eps(\xi))^4}{\sin\phi'(-\phi,\eta,\xi)} e^{-\int_\xi^\infty + \int_{\eta}^\infty \frac{1}{\sin\phi'(-\phi,\eta,\rho)}d\rho} d\xi \nonumber\\
			&\quad + \int_\eta^\infty \frac{(G^\eps(\xi))^4}{\sin \phi'(-\phi,\eta,\xi)} e^{\int_\xi^\eta \frac{1}{\sin \phi'(-\phi,\eta,\rho)}d\rho} d\xi \nonumber\\
			&=\int_\eta^\infty \frac{(G^\eps(\xi))^4}{\sin \phi'(-\phi,\eta,\xi)} e^{\int_\xi^\eta \frac{1}{\sin \phi'(-\phi,\eta,\rho)}d\rho} d\xi. 
		\end{align*}
		Hence 
		\begin{align*}
			&|\varphi^\eps(\eta,\phi) - (G^\eps_\infty)^4| \nonumber\\
			&\quad= \bigg|\int_\eta^\infty \frac{(G^\eps(\xi))^4}{\sin \phi'(-\phi,\eta,\xi)} e^{\int_\xi^\eta \frac{1}{\sin \phi'(-\phi,\eta,\rho)}d\rho} d\xi- (G^\eps_\infty)^4\bigg| \nonumber\\
			&\quad= \bigg| \int_\eta^\infty \frac{(G^\eps(\xi))^4-(G^\eps_\infty)^4}{\sin \phi'(-\phi,\eta,\xi)} e^{\int_\xi^\eta \frac{1}{\sin \phi'(-\phi,\eta,\rho)}d\rho} d\xi \bigg| \nonumber\\
			&\quad\le \int_\eta^\infty \frac{|(G^\eps(\xi))^4-(G^\eps_\infty)^4|}{\sin \phi'(-\phi,\eta,\xi)} e^{\int_\xi^\eta \frac{1}{\sin \phi'(-\phi,\eta,\rho)}d\rho} d\xi \nonumber\\
			&\quad \le \frac{\sqrt{\pi}}{\sqrt{\alpha}} \left(\int_{\sin\phi>0} \sin\phi (\varphi_b(\phi)-G_b^4)^2  d\phi\right)^{1/2} \cdot 4 \gamma^3 \int_\eta^\infty \frac{e^{-\alpha \xi}}{\sin \phi'(-\phi,\eta,\xi)} e^{\int_\xi^\eta \frac{1}{\sin \phi'(-\phi,\eta,\rho)}d\rho} d\xi \nonumber\\
			&\quad \le \frac{4\gamma^3\sqrt{\pi}}{\sqrt{\alpha}} \left(\int_{\sin\phi>0} \sin\phi (\varphi_b(\phi)-G_b^4)^2  d\phi\right)^{1/2} \int_\eta^\infty -e^{-\alpha\xi} d e^{\int_\xi^\eta \frac{1}{\sin \phi'(-\phi,\eta,\rho)}d\rho} \nonumber\\
			&\quad \le \frac{4\gamma^3\sqrt{\pi}}{\sqrt{\alpha}} \left(\int_{\sin\phi>0} \sin\phi (\varphi_b(\phi)-G_b^4)^2  d\phi\right)^{1/2}\nonumber\\
			&\qquad \cdot \bigg(-e^{-\alpha\xi}e^{\int_\xi^\eta \frac{1}{\sin \phi'(-\phi,\eta,\rho)}d\rho}\big|_\eta^\infty + \alpha \int_\eta^\infty e^{-\alpha\xi} e^{\int_\xi^\eta \frac{1}{\sin \phi'(-\phi,\eta,\rho)}d\rho} d\xi\bigg) \nonumber\\
			&\quad \le \frac{4\gamma^3\sqrt{\pi}}{\sqrt{\alpha}} \left(\int_{\sin\phi>0} \sin\phi (\varphi_b(\phi)-G_b^4)^2  d\phi\right)^{1/2} (e^{-\alpha\eta} + \alpha\int_\eta^\infty e^{-\alpha\xi} e^{\eta-\xi} d\xi) \nonumber\\
			&\quad \le \frac{4\gamma^3\sqrt{\pi}}{\sqrt{\alpha}} \left(\int_{\sin\phi>0} \sin\phi (\varphi_b(\phi)-G_b^4)^2  d\phi\right)^{1/2}(e^{-\alpha\eta} + \frac{\alpha}{\alpha+1}e^{-\alpha \eta}) \nonumber\\
			&\quad \le e^{-\alpha\eta} \frac{4\gamma^3\sqrt{\pi}(2\alpha+1)}{\sqrt{\alpha}(\alpha+1)} \left(\int_{\sin\phi>0} \sin\phi (\varphi_b(\phi)-G_b^4)^2  d\phi\right)^{1/2}.
		\end{align*}
		For $\sin\phi<0$ and $E(\phi,\eta)> e^{-V_\infty}$, we use \eqref{eq:varphi--3} and get 
		\begin{align*}
			&|\varphi^\eps(\eta,\phi) - (G_\infty^\eps)^4| \nonumber\\
			&\quad= \Big|\varphi_b(\phi'(-\phi,\eta,0)) e^{-\int_0^{\eta_+} + \int_\eta^{\eta_+} \frac{1}{\sin\phi'(-\phi,\eta,\xi)}d\xi} + \int_0^{\eta_+} \frac{(G^\eps(\xi))^4}{\sin\phi'(-\phi,\eta,\xi)} e^{-\int_\xi^{\eta_+} + \int_{\eta}^{\eta_+} \frac{1}{\sin\phi'(-\phi,\eta,\rho)}d\rho} d\xi \nonumber\\
			&\qquad + \int_\eta^{\eta_+} \frac{(G^\eps(\xi))^4}{\sin \phi'(-\phi,\eta,\xi)} e^{\int_\xi^{\eta} \frac{1}{\sin \phi'(-\phi,\eta,\rho)}d\rho} d\xi - (G_\infty^\eps)^4 \Big| \nonumber\\
			&\quad= \Big| (\varphi_b(\phi'(-\phi,\eta,0))-(G^\eps_\infty)^4) e^{-\int_0^{\eta_+} + \int_\eta^{\eta_+} \frac{1}{\sin\phi'(-\phi,\eta,\xi)}d\xi} \nonumber\\
			&\qquad + \int_0^{\eta_+} \frac{(G^\eps(\xi))^4 - (G_\infty^\eps)^4}{\sin\phi'(-\phi,\eta,\xi)} e^{-\int_\xi^{\eta_+} + \int_{\eta}^{\eta_+} \frac{1}{\sin\phi'(-\phi,\eta,\rho)}d\rho} d\xi \nonumber\\
			&\qquad + \int_\eta^{\eta_+} \frac{(G^\eps(\xi))^4 - (G^\eps_\infty)^4}{\sin \phi'(-\phi,\eta,\xi)} e^{\int_\xi^{\eta} \frac{1}{\sin \phi'(-\phi,\eta,\rho)}d\rho} d\xi \Big| \nonumber\\
			&\quad\le | (\varphi_b(\phi'(-\phi,\eta,0))-(G^\eps_\infty)^4) |e^{-\int_0^{\eta_+} + \int_\eta^{\eta_+} \frac{1}{\sin\phi'(-\phi,\eta,\xi)}d\xi} \nonumber\\
			&\qquad + C_b \int_0^{\eta_+} \frac{e^{-\alpha\xi}}{\sin\phi'(-\phi,\eta,\xi)} e^{-\int_\xi^{\eta_+} + \int_{\eta}^{\eta_+} \frac{1}{\sin\phi'(-\phi,\eta,\rho)}d\rho} d\xi + C_b \int_\eta^{\eta_+} \frac{e^{-\alpha\xi}}{\sin \phi'(-\phi,\eta,\xi)} e^{\int_\xi^{\eta} \frac{1}{\sin \phi'(-\phi,\eta,\rho)}d\rho} d\xi \nonumber\\
			&\quad \le | (\varphi_b(\phi'(-\phi,\eta,0))-(G^\eps_\infty)^4)| e^{-(2\eta_+ - \eta)} + C_b \int_0^{\eta_+} e^{-\alpha\xi} d e^{-\int_\xi^{\eta_+} + \int_{\eta}^{\eta_+} \frac{1}{\sin\phi'(-\phi,\eta,\rho)}d\rho} \nonumber \\
			&\qquad - C_b \int_\eta^{\eta_+} e^{-\alpha\xi} de^{\int_\xi^{\eta} \frac{1}{\sin \phi'(-\phi,\eta,\rho)}d\rho}  \nonumber\\
			&\quad \le | (\varphi_b(\phi'(-\phi,\eta,0))-(G^\eps_\infty)^4)| e^{-(2\eta_+ - \eta)} + C_b e^{-\alpha\xi} e^{-\int_\xi^{\eta_+} + \int_{\eta}^{\eta_+} \frac{1}{\sin\phi'(-\phi,\eta,\rho)}d\rho}\Big|_{\xi=\eta}^{\eta_+} \nonumber\\
			&\qquad + \alpha C_b \int_0^{\eta_+}  e^{-\alpha\xi} e^{-\int_\xi^{\eta_+} + \int_{\eta}^{\eta_+}\frac{1}{\sin\phi'(-\phi,\eta,\rho)}d\rho} d\xi -C_b e^{-\alpha\xi} e^{\int_\xi^{\eta} \frac{1}{\sin \phi'(-\phi,\eta,\rho)}d\rho} \Big|_{\xi=\eta}^{\eta_+} \nonumber\\
			&\qquad - \alpha C_b \int_{\eta}^{\eta_+} e^{-\alpha\xi} e^{\int_\xi^{\eta} \frac{1}{\sin \phi'(-\phi,\eta,\rho)}d\rho} d\xi \nonumber\\
			&\quad \le | (\varphi_b(\phi'(-\phi,\eta,0))-(G^\eps_\infty)^4)| e^{-(2\eta_+ - \eta)} + C_b e^{-\alpha\eta_+} e^{-(\eta_+ - \eta)} - C_b e^{-\alpha\eta}e^{-2\int_\eta^{\eta_+} \frac{1}{\sin\phi'(-\phi,\eta,\rho)}d\rho}  \nonumber\\
			&\qquad + \alpha C_b \int_0^{\eta_+} e^{-\alpha\xi} e^{-(2\eta_+-\eta-\xi)} d\xi - C_b e^{-\alpha\eta_+} e^{\int_{\eta_+}^{\eta} \frac{1}{\sin \phi'(-\phi,\eta,\rho)}d\rho} + C_b e^{-\alpha\eta} \nonumber\\
			&\qquad - \alpha C_b \int_{\eta}^{\eta_+} e^{-\alpha\xi} e^{\int_\xi^{\eta} \frac{1}{\sin \phi'(-\phi,\eta,\rho)}d\rho} d\xi \nonumber\\
			&\quad \le | (\varphi_b(\phi'(-\phi,\eta,0))-(G^\eps_\infty)^4)| e^{-(2\eta_+ - \eta)} + C_b e^{-(\alpha+1)\eta_+ + \eta} + \alpha C_b\frac{e^{-(2\eta_+-\eta)}}{1-\alpha}(1-e^{\eta_+(1-\alpha)})  + C_b e^{-\alpha\eta} \nonumber\\
			&\le  | (\varphi_b(\phi'(-\phi,\eta,0))-(G^\eps_\infty)^4)| e^{-\eta} + C_b e^{-\alpha\eta} + \frac{\alpha}{1-\alpha}C_b e^{-\eta} + C_be^{-\alpha\eta},
		\end{align*}
		where in the last inequality we use $\eta\le \eta_+$, which implies $e^{-(2\eta_+-\eta)} \le e^{-\eta}$.
		
		Therefore, we conclude that in any of the three cases, 
		\begin{align}\label{eq:non/conv}
			|\varphi^\eps(\eta,\phi)-(G_\infty^\eps)^4| \le Ce^{-\alpha\eta},
		\end{align}
		where $C$ depends on $G_b$, $\varphi_b$, $G_\infty^\eps$, $C_b$ and $\alpha$.
	\end{proof}	
	\begin{rem}
We can see from the above proof that the geometric correction gives a slightly weaker estimate on the convergence rate.  By \eqref{eq:alpha/e}, $\alpha \le 1-\frac12 \|F\|_{L^\infty} \le 1-2\eps$. Hence the convergence rate $\alpha$ should be strictly less than $1$. In the flat case without geometric correction, $\alpha$ can be $1$ \cite{Bounadrylayer2019GHM2}.
	\end{rem}

\section{Linearized Milne problem}\label{sec4}
For the higher order boundary layer expansions \eqref{eq:g1} and \eqref{eq:g2}, the function $\tilde{T}_k$, $\tilde{\psi}_k$, $k\ge 1$ satisfies the linear Milne problem of the form 
\begin{align}
	&\partial_\eta^2 G^\eps + F(\eps;\eta)\partial_\eta G^\eps + \langle \varphi^\eps - 4 h^3 G^\eps \rangle = S_1, \label{eq:l1}\\
	&\sin\phi\partial_\eta \varphi^\eps + F(\eps;\eta) \cos\phi \partial_\phi \varphi^\eps + \varphi^\eps - 4h^3 G^\eps = S_2, \label{eq:l2}
\end{align}
with boundary conditions 
\begin{align}
	&G^\eps(0) = 0, \label{eq:l1b}\\
	&\varphi^\eps(0,\phi) = \varphi_b(\phi),\quad \text{for }\sin\phi>0,\label{eq:l2b}
\end{align}
where $S_1$, $S_2$ are known functions.
This section is devoted to the proof of the existence for the above linear Milne problem. 
	The boundary data $\varphi_b$ and the source term $S_1$, $S_2$ are assumed to satisfy 
	\begin{align}\label{eq:as/s}
		|\varphi_b(\phi)| \le C_b,\quad |S_1(\eta)|,\, |S_2(\eta,\phi)| \le C_S e^{-K\eta},\quad \text{for any } \eta\in \mathbb{R}_+,\, \phi \in [-\pi,\pi).
	\end{align}

	In this section, we focus on proving the existence and uniqueness of solutions for the above linear Milne problem, stated in the following theorem.
	\begin{theorem}\label{thm.linearmilne}
		Given $h=h(\eta) \in C^1_{\rm \loc}([0,\infty))$ satisfying the spectral assumption \ref{asA}. Given functions $S_1=S_1(\eta) \in L^2_{\rm loc}([0,\infty))$, $S_2=S_2(\eta,\phi)\in L^2_{\rm loc}([0,\infty)\times [-\pi,\pi))$ satisfying \eqref{eq:as/s}.
	Then there exists a unique solution $(G^\eps,\varphi^\eps) \in L^2_{\rm loc}(\mathbb{R}_+)\times L^2_{\rm loc}(\mathbb{R}_+\times[-\pi,\pi))$ to system \eqref{eq:l1}-\eqref{eq:l2} with boundary conditions \eqref{eq:l1b}-\eqref{eq:l2b}. Moreover, the solution satisfies 
	\begin{align}\label{eq:linearmilne/est}
		&\frac18 \int_0^\infty \int_{-\pi}^\pi  e^{2\beta'\eta} (\varphi^\eps - 4h^3 G^\eps)^2 d\phi d\eta  + \frac12 \int_{\sin\phi<0} |\sin\phi| (\varphi^\eps)^2(0,\cdot) d\phi \nonumber\\
		&\quad  \le  \frac12 \int_{\sin\phi>0} \sin\phi \varphi_b^2 d\phi + \|e^{\beta'\eta} S_2\|_{L^2([0,\infty)\times [-\pi,\pi))}^2,
	\end{align}
	 and there exists constants $G^\eps_\infty$, such that 
	\begin{align*}
		|G^\eps(\eta)-G^\eps_\infty| \le C e^{-\beta'\eta}, \quad |G^\eps(\eta,\phi)-4h_\infty^3 G_\infty^\eps| \le C e^{-\beta'\eta},
	\end{align*}
	for any $0\le \beta'\le \min\{1/2,\beta,K\}$, where $C>0$ is a constant depending on $\varphi_b$.
	\end{theorem}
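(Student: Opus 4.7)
My plan is to parallel the strategy used in Section \ref{sec3}: first solve the system on a bounded interval $[0,B]$ with artificial boundary conditions $\partial_\eta G^\eps(B)=0$ and $\varphi^\eps(B,\phi)=\varphi^\eps(B,-\phi)$ for $\sin\phi<0$, then derive uniform-in-$B$ weighted estimates, and finally pass to the limit $B\to\infty$ via weak compactness. Since the system is now linear, existence and uniqueness on $[0,B]$ is easier than in Lemma \ref{lem.B}: I can iterate between the elliptic problem (treating $\langle\varphi^\eps\rangle - S_1$ as a source and appealing to the linear case of Lemma \ref{lem.B1}) and the transport problem (handled directly via the characteristic formulas of Lemma \ref{lem.B2}), or alternatively use a Galerkin approach.

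The heart of the proof is the weighted energy estimate \eqref{eq:linearmilne/est}. I would test \eqref{eq:l2} against $e^{2\beta'\eta}\varphi^\eps$ and \eqref{eq:l1} against $e^{2\beta'\eta}\cdot 4h^3 G^\eps$, integrate, and combine. Integration by parts on the transport part produces, as in the proof of Theorem \ref{thm.nm}, the squared term $e^{2\beta'\eta}(\varphi^\eps)^2$ with a boundary trace $\tfrac12\int\sin\phi(\varphi^\eps)^2(0,\phi)\,d\phi$. Integration by parts on the elliptic part, exploiting $G^\eps(0)=0$, yields $\int e^{2\beta'\eta}\cdot 4h^3|\partial_\eta G^\eps|^2\,d\eta$ together with a cross term $\int e^{2\beta'\eta}\partial_\eta(4h^3)\,G^\eps\,\partial_\eta G^\eps\,d\eta$ arising from the non-constant $h$. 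Writing $4h^3=(2h^{3/2})^2$ and applying Cauchy--Schwarz, this cross term is bounded by the spectral assumption \ref{asA} applied with $f=G^\eps$, and thus absorbed into the positive quantity $\int e^{2\beta'\eta}\cdot 4h^3|\partial_\eta G^\eps|^2\,d\eta$ with a loss factor $M<1$. After combining the two identities, using the algebraic identity $\int(\varphi^\eps)^2\,d\phi - 2\pi(4h^3 G^\eps)^2 = \int(\varphi^\eps-4h^3 G^\eps)^2\,d\phi + 2\cdot 4h^3 G^\eps\,\langle\varphi^\eps-4h^3 G^\eps\rangle$ to reassemble the squared form, and using the consistency relation $\partial_\eta G^\eps(\eta)-\langle\sin\phi\,\varphi^\eps(\eta,\cdot)\rangle = -\int_0^\eta S_1\,d\tau$ obtained by comparing \eqref{eq:l1} with the $\phi$-average of \eqref{eq:l2}, I recover the desired bound on $\int e^{2\beta'\eta}(\varphi^\eps-4h^3 G^\eps)^2$ and on the outflow trace $\int_{\sin\phi<0}|\sin\phi|(\varphi^\eps)^2(0,\phi)\,d\phi$. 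The sources $S_1,S_2$ are then absorbed via Cauchy--Schwarz together with the exponential decay hypothesis \eqref{eq:as/s}.

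The geometric correction terms $F(\eps;\eta)\partial_\eta G^\eps$ and $F(\eps;\eta)\cos\phi\,\partial_\phi\varphi^\eps$ generate additional contributions which are controlled using $\|F\|_{L^\infty}\le \tfrac{4}{4-3\delta}\eps$ from \eqref{eq:Fin4}, provided $\eps$ is small and $\beta'\le\min\{1/2,\beta,K\}$. Passage to the half-line limit, together with the existence of $G^\eps_\infty=\lim_{\eta\to\infty}G^\eps$ and the rate $|G^\eps-G^\eps_\infty|\le Ce^{-\beta'\eta}$, follows as in Theorem \ref{thm.nm} from the identity $\partial_\eta G^\eps=\langle\sin\phi\,\varphi^\eps\rangle-\int_0^\eta S_1\,d\tau$ combined with the exponential $L^2$ control of $\varphi^\eps-4h^3 G^\eps$; the decay of $\varphi^\eps-4h_\infty^3 G^\eps_\infty$ is then obtained by substituting these bounds into the three characteristic representations from Lemma \ref{lem.B2}, exactly as in the last step of the proof of Theorem \ref{thm.nm}. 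Uniqueness on the half-line is immediate from the weighted estimate applied to the homogeneous problem. I expect the most delicate point to be the spectral-assumption step in the presence of the geometric correction: in the flat case treated in \cite{Bounadrylayer2019GHM2} the multiplier $e^{2\beta\eta}\cdot 4h^3 G^\eps$ matches \eqref{eq:spassump2} exactly, whereas here one must verify that the extra $O(\eps)$ contributions from $F$ and the weight reduction from $\beta$ to $\beta'<\min\{1/2,\beta\}$ do not consume the $M<1$ margin provided by \ref{asA}.
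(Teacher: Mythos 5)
Your overall route coincides with the paper's: existence on $[0,B]$ with the reflection conditions, a weighted energy estimate obtained by testing against $e^{2\beta'\eta}\,4h^3G^\eps$ and $e^{2\beta'\eta}\varphi^\eps$ with the cross term absorbed via \ref{asA}, passage to the limit $B\to\infty$, decay from the characteristic formulas, and uniqueness from the energy identity. Two points, however, need repair.

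First, your consistency relation $\partial_\eta G^\eps-\langle\sin\phi\,\varphi^\eps\rangle=-\int_0^\eta S_1\,d\tau$ is incorrect. Subtracting the $\phi$-average of \eqref{eq:l2} from \eqref{eq:l1} gives
\begin{align*}
\partial_\eta\bigl(\partial_\eta G^\eps-\langle\sin\phi\,\varphi^\eps\rangle\bigr)+F(\eps;\eta)\bigl(\partial_\eta G^\eps-\langle\sin\phi\,\varphi^\eps\rangle\bigr)=S_1-\langle S_2\rangle,
\end{align*}
so the defect is driven by $S_1-\langle S_2\rangle$ (not $S_1$), carries the integrating factor $e^{V(\eps;\eta)}$, and is anchored by the condition $\partial_\eta G^\eps(B)-\langle\sin\phi\,\varphi^\eps(B,\cdot)\rangle=0$ at the right endpoint, not at $\eta=0$. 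The paper imposes $S_1=\langle S_2\rangle$ in Lemma \ref{lem.lB} precisely so that $\partial_\eta G^\eps=\langle\sin\phi\,\varphi^\eps\rangle$ holds exactly; this identity is what converts the two geometric-correction contributions into the single term $\tfrac12\int F(\eps;\eta)\sin\phi\,(\varphi^\eps-4h^3G^\eps)^2$, which is then absorbed using $\|F\|_{L^\infty}\le\tfrac{4}{4-3\delta}\eps$ as in \eqref{eq:F/beta'}. With your version of the relation these manipulations produce uncontrolled extra terms, so you should either add the hypothesis $S_1=\langle S_2\rangle$ (which is satisfied in the application to \eqref{eq:g1}--\eqref{eq:g2}) or track the corrected defect through the estimate.

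Second, your bounded-interval existence argument is underspecified: an iteration alternating between the elliptic and transport equations worked for the nonlinear leading-order problem because of the monotonicity supplied by Lemmas \ref{lem.B1}--\ref{lem.B2}, but the linear problem with sign-changing sources has no such comparison structure, so that iteration has no obvious convergence mechanism. The paper instead freezes only the geometric-correction terms $F\partial_\eta G^{k-1}$ and $F\cos\phi\,\partial_\phi\varphi^{k-1}$ at the previous iterate, solves the resulting flat linear Milne problem by Theorem \ref{thm:apB2}, and closes with the Schauder fixed point theorem using the a priori bound \eqref{eq:lm/b1/1/est}; the smallness of $\|F\|_{L^\infty}=O(\eps)$ is what makes the fixed-point set bounded. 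Your Galerkin alternative could work but would need to be carried out; as written this step is a gap.
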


	\subsection{Spectral assumption \ref{asA}}
	The spectral assumption is motivated by the corresponding eigenvalue problem of system \eqref{eq:l1}-\eqref{eq:l2} without the geometric correction terms. The assumption was proposed in \cite{Bounadrylayer2019GHM2} to study system \eqref{eq:l1}-\eqref{eq:l2} without the geometric correction terms. Here we will show that under the spectral assumption, the geometric corrections does not affect the linear stability of the system. We first recall the following lemma that was proved in \cite{Bounadrylayer2019GHM2}.
	\begin{lemma}[\cite{Bounadrylayer2019GHM2}]
		Assume  $h\in C^1_{\rm loc}(\mathbb{R}_+)$  atisfies the spectral assumption \ref{asA}. Let $F=F(\eps;\eta)$ be given by \eqref{eq:Fdef}. Then it holds the following  inequality
		\begin{align}\label{eq:sp/ineq}
			\int_0^\infty e^{2\beta' \eta} 4h^3|\partial_\eta f|^2 d\eta + \int_0^\infty e^{2\beta' \eta} \partial_\eta (4h^3) f\partial_\eta f d\eta \ge 0,
		\end{align}
		holds for any $0\le \beta'\le \beta$ and for any $f\in C^1_{\rm loc}(\mathbb{R}_+)$ satisfying $f(0)=0$.
	\end{lemma}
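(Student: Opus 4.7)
The plan is to recast the integrand as a square minus a correction, then apply Young's inequality to reduce the problem to the form of the spectral assumption \ref{asA}, with the lone subtlety lying in the weight $\beta'$ versus $\beta$.

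Set $g := 2h^{3/2}$, so that $g^2 = 4h^3$ and $(g^2)' = \partial_\eta(4h^3)$. Completion of squares yields the algebraic identity
$$g^2(\partial_\eta f)^2 + (g^2)' f\,\partial_\eta f = (g\,\partial_\eta f + g'f)^2 - (g'f)^2 = [\partial_\eta(gf)]^2 - (g'f)^2,$$
so the target inequality is equivalent to
$$\int_0^\infty e^{2\beta'\eta}[\partial_\eta(gf)]^2 \, d\eta \ge \int_0^\infty e^{2\beta'\eta}(g'f)^2 \, d\eta$$
for every $f\in C^1_{\rm loc}(\mathbb{R}_+)$ with $f(0)=0$; equivalently, in the variable $u=gf$ with $u(0)=0$, a weighted Hardy-type inequality $\int e^{2\beta'\eta}(\partial_\eta u)^2 \ge \int e^{2\beta'\eta}(g'/g)^2 u^2$.

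Next, I would apply the sharp Young inequality $(a+b)^2 \ge \tfrac12 a^2 - b^2$ with $a=g\,\partial_\eta f$ and $b=g'f$, which gives
$$\int_0^\infty e^{2\beta'\eta}[\partial_\eta(gf)]^2 \, d\eta \ge \tfrac12 \int_0^\infty e^{2\beta'\eta}g^2(\partial_\eta f)^2 \, d\eta - \int_0^\infty e^{2\beta'\eta}(g'f)^2 \, d\eta.$$
Consequently the claim reduces to
$$\int_0^\infty e^{2\beta'\eta}(g'f)^2 \, d\eta \le \tfrac14 \int_0^\infty e^{2\beta'\eta} g^2(\partial_\eta f)^2 \, d\eta.$$
Since the constant $M<1$ in \eqref{eq:spassump2}, this is precisely the spectral assumption read at the weight $\beta'$: evaluated at $\beta$ the hypothesis gives $\int e^{2\beta\eta}(g'f)^2 \le (M/4)\int e^{2\beta\eta}g^2(\partial_\eta f)^2 < \tfrac14 \int e^{2\beta\eta}g^2(\partial_\eta f)^2$, so only the replacement of $\beta$ by the smaller parameter $\beta'$ has to be justified.

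The main obstacle is therefore to propagate the spectral inequality \eqref{eq:spassump2} from the single exponent $\beta$ supplied by the hypothesis to every $\beta' \in [0,\beta]$. A direct weight comparison is inconclusive because $e^{2\beta'\eta}$ appears symmetrically on both sides. I would handle this by testing \eqref{eq:spassump2} against the rescaled function $f(\eta) := e^{(\beta'-\beta)\eta}\tilde f(\eta)$, which preserves the boundary value $\tilde f(0)=0$ and converts both weighted integrals into their $\beta'$-versions. Expanding the square $(\partial_\eta f)^2$ and integrating the resulting cross term $\int e^{2\beta'\eta}g^2 \tilde f\,\partial_\eta\tilde f$ by parts produces a $\beta'$-weighted spectral inequality for $\tilde f$ perturbed by lower-order terms of magnitude $O(\beta-\beta')$ of the form $[(\beta-\beta')^2 + 2\beta'(\beta-\beta')]\int e^{2\beta'\eta}g^2 \tilde f^2 + (\beta-\beta')\int e^{2\beta'\eta}(g^2)'\tilde f^2$. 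These corrections are absorbed into the principal terms by a Poincar\'e-type inequality for functions vanishing at $0$ (if necessary, iterated over a fine partition $\beta = \beta_0 > \beta_1 > \cdots > \beta_n = \beta'$ so that at each step the residual constant stays strictly below $1/4$). Boundary terms at infinity are handled by a standard truncation and limiting argument valid for $f \in C^1_{\rm loc}$, exploiting the finiteness of the spectral-type energies on bounded intervals. Once the spectral inequality is secured at every $\beta' \in [0,\beta]$, the two preceding steps close the proof.
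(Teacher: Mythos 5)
The paper does not actually prove this lemma: it imports it verbatim from \cite{Bounadrylayer2019GHM2}, so there is no in-paper argument to compare against, and I can only assess your proposal on its own terms.

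Your first half is correct and is the natural reduction. With $g=2h^{3/2}$, the identity $g^{2}(\partial_\eta f)^{2}+(g^{2})'f\,\partial_\eta f=[\partial_\eta(gf)]^{2}-(g'f)^{2}$ together with your Young step correctly reduces \eqref{eq:sp/ineq} to the $\beta'$-weighted spectral inequality
\begin{align*}
\int_0^\infty e^{2\beta'\eta}(g')^{2}f^{2}\,d\eta\ \le\ \tfrac14\int_0^\infty e^{2\beta'\eta}g^{2}|\partial_\eta f|^{2}\,d\eta ,
\end{align*}
which the hypothesis \eqref{eq:spassump2} supplies (with the better constant $M/4$) only at the single exponent $\beta$. (Applying Cauchy--Schwarz directly to the cross term gives the quantitatively stronger conclusion $\ge(1-\sqrt{M})\int e^{2\beta'\eta}g^{2}|\partial_\eta f|^{2}$, but it needs exactly the same $\beta'$-weighted input, so the crux is identical.)

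The gap is the transfer from $\beta$ to $\beta'$, and the mechanism you propose does not work. Testing \eqref{eq:spassump2} with $f=e^{(\beta'-\beta)\eta}\tilde f$ and integrating the cross term by parts produces, as you record, the additional right-hand contribution $(\beta^{2}-\beta'^{2})\int_0^\infty e^{2\beta'\eta}g^{2}\tilde f^{2}\,d\eta+(\beta-\beta')\int_0^\infty e^{2\beta'\eta}(g^{2})'\tilde f^{2}\,d\eta$, plus a boundary term at infinity that need not vanish for general $f\in C^{1}_{\rm loc}$. The first of these is a full weighted $L^{2}$ norm of $\tilde f$ against the exponentially growing density $e^{2\beta'\eta}g^{2}$ with $g^{2}\ge 4a^{3}>0$ in the intended application, and no Poincar\'e inequality of the form $\int e^{2\beta'\eta}g^{2}\tilde f^{2}\le C\int e^{2\beta'\eta}g^{2}|\partial_\eta\tilde f|^{2}$ can hold for functions vanishing only at $\eta=0$: take $\tilde f$ smooth with $\tilde f(0)=0$ and $\tilde f\equiv 1$ for $\eta\ge 1$; the right side is finite while the left side is infinite for every $\beta'>0$. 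Iterating over a fine partition $\beta=\beta_0>\cdots>\beta_n=\beta'$ does not repair this, because the obstruction is structural rather than perturbative: each coefficient $\beta_j^{2}-\beta_{j+1}^{2}$ multiplies an integral that is generically infinite. To close the argument one needs either the spectral inequality to be postulated (or proved in the cited reference) for every exponent in $[0,\beta]$, or a genuine monotonicity statement in $\gamma$ for the sharp constant of the weighted Hardy inequality $\int e^{2\gamma\eta}(g'/g)^{2}u^{2}\le C_\gamma\int e^{2\gamma\eta}|\partial_\eta u|^{2}$ for $u(0)=0$; note that the Muckenhoupt two-sided characterization only yields $C_{\beta'}\le 4\,C_{\beta}$, which is not enough to preserve the threshold $\tfrac14$ when all one knows is $M<1$.
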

	It has been proved in \cite{Bounadrylayer2019GHM2} that without geometric correction, system \eqref{eq:l1}-\eqref{eq:l2} has a unique solution under the assumption that $h$ satisfies the spectral assumption \ref{asA}, see Appendix \ref{sec:apB} for details.

\subsection{Existence on the bounded interval} 
We first consider the existence for system \eqref{eq:l1}-\eqref{eq:l2} on the bounded interval $\eta\in[0,B]$:
\begin{align}
	&\partial_\eta^2 G_B^\eps + F(\eps;\eta)\partial_\eta G^\eps_B + \langle \varphi^\eps_B - 4h^3 G^\eps_B \rangle = S_1, \label{eq:l1B}\\
	&\sin\phi \partial_\eta \varphi^\eps_B + F(\eps;\eta) \cos \phi \partial_\phi \varphi^\eps_B + \varphi^\eps_B - 4h^3 G^\eps_B =S_2,\label{eq:l2B}
\end{align}
with boundary conditions 
\begin{align}
	&G^\eps_B(0)=0,\quad \partial_\eta G^\eps(B)= 0,\label{eq:l1Bb}\\
	&\varphi^\eps_B(0,\phi) = \varphi_b(\phi),\quad \varphi^\eps_B(B,\phi)=\varphi^\eps_B(B,-\phi),\quad \text{for }\sin\phi>0.\label{eq:l2Bb}
\end{align}

We will consider the case when $\langle S_2 \rangle =S_1$ and prove the following lemma.
\begin{lemma}\label{lem.lB}
	Let $h$ satisfy the spectral assumption \ref{asA}. Assume $\varphi_b$, $S_1$, $S_2$ satisfy \eqref{eq:as/s} and $S_1(\eta) = \langle S_2(\eta,\cdot)\rangle$ for any $\eta\in\mathbb{R}_+$ and $\varphi_b \in C^\infty(0,\pi)$. Then there exists a solution $(G_B^\eps,\varphi_B^\eps) \in L^2([0,B])\times L^2([0,B]\times [-\pi,\pi))$ to system \eqref{eq:l1B}-\eqref{eq:l2B} with boundary conditions \eqref{eq:l1Bb}-\eqref{eq:l2Bb}, and the solution verifies 
	\begin{align}\label{eq:l1B/est}
		&\frac14\int_0^B \int_{-\pi}^\pi (\varphi_B^\eps - 4h^3 G_B^\eps)^2 d\phi d\eta + \frac12 \int_{\sin\phi<0} |\sin\phi| (\varphi_B^\eps)^2(0,\phi) d\phi \nonumber\\
		&\quad  \le \frac12 \int_{\sin\phi>0} |\sin\phi|(\varphi_b)^2 d\phi+\|S_2\|_{L^2([0,B]\times [-\pi,\pi))}^2,
	\end{align}
	for $\eps$ sufficiently small.
\end{lemma}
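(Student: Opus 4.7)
The plan is to establish \eqref{eq:l1B/est} as an a priori estimate first, then deduce existence by a perturbation argument from the flat-case result recalled in Appendix B.

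For the energy estimate, I would test \eqref{eq:l1B} against $-4h^3 G_B^\eps$ and \eqref{eq:l2B} against $\varphi_B^\eps$, then add the two identities. Integration by parts combined with the boundary conditions \eqref{eq:l1Bb}-\eqref{eq:l2Bb} produces the boundary expression $\frac12\int_{\sin\phi>0}\sin\phi\,\varphi_b^2\,d\phi - \frac12\int_{\sin\phi<0}|\sin\phi|(\varphi_B^\eps)^2(0,\phi)\,d\phi$ at $\eta=0$ (no contribution at $\eta=B$ since $\partial_\eta G_B^\eps(B)=0$ and $\varphi_B^\eps(B,\cdot)$ is even in $\phi$), the elliptic quadratic form $\int_0^B 4h^3|\partial_\eta G_B^\eps|^2\,d\eta + \int_0^B\partial_\eta(4h^3)\,G_B^\eps \partial_\eta G_B^\eps\,d\eta$, and the coupling $\int_0^B\langle(\varphi_B^\eps - 4h^3 G_B^\eps)^2\rangle\,d\eta$. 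The compatibility $S_1=\langle S_2\rangle$ collapses the right-hand side to $\int_0^B \langle S_2\,(\varphi_B^\eps - 4h^3 G_B^\eps)\rangle\,d\eta$, controlled by Cauchy-Schwarz. The spectral assumption \ref{asA} (applied with weight $\beta'=0$ via \eqref{eq:sp/ineq}, using $G_B^\eps(0)=0$) yields nonnegativity of the elliptic quadratic form.

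The geometric correction $F(\eps;\eta)$ contributes two extra terms: $-\int_0^B F\cdot 4h^3 G_B^\eps \partial_\eta G_B^\eps\,d\eta$ from the elliptic test, and $\frac12\int_0^B F\langle\sin\phi\,(\varphi_B^\eps)^2\rangle\,d\eta$ from integration by parts in $\phi$ on the transport test. Both are $O(\eps)$ by \eqref{eq:Fin4}; the first is absorbed via Young's inequality together with a Poincar\'e-type bound $\|G_B^\eps\|_{L^2}^2 \lesssim \|\partial_\eta G_B^\eps\|_{L^2}^2$ (available since $G_B^\eps(0)=0$ and $h$ is bounded below), and the second by splitting $\varphi_B^\eps=(\varphi_B^\eps-4h^3 G_B^\eps)+4h^3 G_B^\eps$ and applying the same Poincar\'e-type control. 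For existence I would define a map $\mathcal{T}:(G_*,\varphi_*)\mapsto (G,\varphi)$ where $(G,\varphi)$ solves the flat-case linear Milne problem with modified sources $S_1 - F(\eps;\eta)\partial_\eta G_*$ and $S_2 - F(\eps;\eta)\cos\phi\,\partial_\phi \varphi_*$. Applying the flat-case estimate to the difference $\mathcal{T}(G_{*,1},\varphi_{*,1}) - \mathcal{T}(G_{*,2},\varphi_{*,2})$ yields a contraction factor of order $\eps$, so Banach's fixed-point theorem provides a solution for $\eps$ small.

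The main obstacle is the function-space choice in the fixed-point construction, since iterating forces control of $\partial_\eta G_*$ and $\partial_\phi\varphi_*$, yet the transport solution does not gain $\phi$-regularity cheaply because of the usual grazing-angle singularity at $\sin\phi=0$. A cleaner alternative, if the flat-case theory supplies insufficient regularity, is a direct Galerkin scheme on $[0,B]\times[-\pi,\pi)$: discretize in finite-dimensional subspaces, use the uniform a priori estimate established above to extract a weakly convergent subsequence, and pass to the limit using linearity of the system. The smoothness assumption $\varphi_b\in C^\infty(0,\pi)$ ensures the approximate problems are well-posed and the boundary trace of $\varphi_B^\eps$ is regular enough for the integration-by-parts calculations to be rigorous.
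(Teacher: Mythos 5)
Your energy identity is set up correctly (the boundary terms, the elliptic quadratic form handled by the spectral assumption, and the use of $S_1=\langle S_2\rangle$ to reduce the source to $\int\langle S_2(\varphi^\eps_B-4h^3G^\eps_B)\rangle$ all match the paper), but the way you absorb the two geometric-correction terms misses the structural ingredient the paper relies on: the identity $\partial_\eta G^\eps_B(\eta)=\langle\sin\phi\,\varphi^\eps_B(\eta,\cdot)\rangle$ on $[0,B]$, obtained by subtracting the angular average of \eqref{eq:l2B} from \eqref{eq:l1B} (here $S_1=\langle S_2\rangle$ is used a second time) and integrating the resulting ODE from $\eta=B$ where both quantities vanish. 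With this identity the terms $-\int_0^BF\,4h^3G^\eps_B\partial_\eta G^\eps_B\,d\eta$ and $\tfrac12\int_0^B\int F\sin\phi\,(\varphi^\eps_B)^2\,d\phi\,d\eta$ combine exactly into $\tfrac12\int_0^B\int F\sin\phi\,(\varphi^\eps_B-4h^3G^\eps_B)^2\,d\phi\,d\eta$, which is absorbed by the dissipation with coefficient $O(\eps)$ via \eqref{eq:Fin4}, yielding the $B$-independent constants in \eqref{eq:l1B/est}. Your alternative — Young plus a Poincar\'e bound $\|G^\eps_B\|_{L^2}\lesssim\|\partial_\eta G^\eps_B\|_{L^2}$ — does not close: the Poincar\'e constant on $[0,B]$ grows with $B$, the left-hand side controls $\int 4h^3|\partial_\eta G^\eps_B|^2$ only up to the sign given by the spectral assumption (coercivity in $\|\partial_\eta G^\eps_B\|^2$ additionally needs $h$ bounded below, which is not among the lemma's hypotheses), and the cross term produced by splitting $\varphi^\eps_B=(\varphi^\eps_B-4h^3G^\eps_B)+4h^3G^\eps_B$ again requires $\|G^\eps_B\|_{L^2}$. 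So as written the a priori estimate would carry $B$-dependent constants, which is incompatible with \eqref{eq:l1B/est} and with the subsequent passage $B\to\infty$.

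The existence argument has a second gap that you yourself flag but do not repair. A Banach contraction for the map $(G_*,\varphi_*)\mapsto(G,\varphi)$ requires a quantitative bound on $\partial_\eta G_*$ and, more seriously, on $\partial_\phi\varphi_*$ in terms of the data; Theorem \ref{thm:apB2} supplies no such estimate, and the grazing set $\sin\phi=0$ prevents one. The paper avoids this by using the Leray--Schauder theorem rather than a contraction: the lagged iteration \eqref{eq:lm/b1}--\eqref{eq:lm/b2} is used only to define a continuous compact operator $\mathcal{L}$ (with $C^\infty$ regularity in $\phi$ propagated qualitatively, which is what the hypothesis $\varphi_b\in C^\infty(0,\pi)$ buys), and the required boundedness of the homotopy set $\{(G,\varphi)=\sigma\mathcal{L}(G,\varphi)\}$ is proved for \emph{fixed points} of the family, for which the identity $\partial_\eta G=\langle\sin\phi\,\varphi\rangle$ holds exactly and the term $\int F\cos\phi\,\partial_\phi\varphi\,(\varphi-4h^3G)$ integrates by parts in $\phi$ into $\tfrac12\int F\sin\phi(\varphi-4h^3G)^2$ — no $\phi$-derivative estimate is ever needed. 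Your Galerkin fallback could in principle be made to work, but only after the a priori estimate is repaired as above, and it is not carried out; as it stands the proposal does not establish the lemma.
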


\begin{proof}
	The proof is based on Theorem \ref{thm:apB2} for existence of the linear Milne problem without geometric correction and the Leray-Schauder fixed point theorem.

	First we construct a sequence of functions $\{(G^k,\varphi^k)\}_{k=0}^\infty$ by $G^0=0,\varphi^0=0$ and 
	\begin{align}
		&\partial_\eta^2 G^k + F(\eps;\eta)\partial_\eta G^{k-1} + \langle \varphi^k - 4h^3 G^k\rangle = S_1,\label{eq:lm/b1}\\
		&\sin\phi \partial_\eta \varphi^k + F(\eps;\eta) \cos\phi \partial_\phi \varphi^{k-1} + \varphi^k - 4h^3 G^{k} = S_2,\label{eq:lm/b2}
	\end{align}
	with boundary conditions 
	\begin{align}
		&G^k(0)=0,\quad \partial_\eta G^k(B)=0,\label{eq:lm/b1b}\\
		&\varphi^k(0,\phi) = \varphi_b(\phi),\quad \varphi^k(B,\phi) = \varphi^k(B,-\phi),\quad \text{for } \sin \phi>0.\label{eq:lm/b2b}
	\end{align}
	Given bounded functions $(G^{k-1},\varphi^{k-1})$, according to Theorem \ref{thm:apB2} in Appendix \ref{sec:apB}, there exists unique solution $(G^k,\varphi^k)$ for the above system.

	Next we show the boundness of $(G^k,\varphi^k)$ in $L^2([0,B])\times L^2([0,B]\times [-\pi,\pi])$. First, the relation $\partial_\eta G^k = \langle \sin\phi \varphi^k\rangle$ on $[0,B]$ is preserved during the iterations. Suppose $\partial_\eta G^{k-1}=\langle \sin\phi \varphi^{k-1}\rangle$ on $[0,B]$, then comparing \eqref{eq:lm/b1} to $\langle$\eqref{eq:lm/b2}$\rangle$ and using $S_1 = \langle S_2\rangle$ yields 
	\begin{align*}
		\partial_\eta (\partial_\eta G^k - \langle \sin\phi \varphi^k\rangle) &= - F(\eps;\eta) (\partial_\eta G^{k-1} - \langle \cos\phi \partial_\phi \varphi^{k-1}\rangle) \nonumber\\
		&= - F(\eps;\eta)(\partial_\eta G^{k-1} - \langle \sin\phi \varphi^{k-1}\rangle)=0.
	\end{align*}
	This, together with the boundary condition $\partial_\eta G^k(B) = \langle \varphi^k(B,\cdot) \rangle=0$, implies that $\partial_\eta G^k (\eta) = \langle \sin\phi\varphi^k(\eta,\cdot)\rangle$ for any $\eta \in [0,B]$. 

	We multiply \eqref{eq:lm/b1} by $4h^3 G^{k}$ and \eqref{eq:lm/b2} by $\varphi^k$ and integrate to get 
	\begin{align}\label{eq:lm/b-1}
		&-\int_{0}^B 4h^3 G^k \partial_\eta^2 G^k d\eta + \frac12 \int_0^B \int_{-\pi}^\pi \sin\phi \partial_\eta (\varphi^k)^2 d\phi d\eta + \int_0^B \int_{-\pi}^\pi (\varphi^k - 4h^3 G^k)^2 d\phi d\eta \nonumber\\
		&\quad + \int_0^B \int_{-\pi}^\pi F(\eps;\eta) \cos\phi \partial_\phi \varphi^{k-1}\cdot \varphi^k d\phi d\eta - \int_0^B F(\eps;\eta)4h^3 G^k\partial_\eta G^{k-1}d\eta \nonumber\\
		&\qquad = \int_0^B\int_{-\pi}^\pi S_2(\varphi^k - 4h^3 G^k) d\phi d\eta.
	\end{align}
	The inequality \eqref{eq:sp/ineq} from the spectral assumption implies 
	\begin{align}\label{eq:lm/b-2}
		-\int_{0}^B 4h^3 G^k \partial_\eta^2 G^k = \int_0^B 4h^3 |\partial_\eta G^k|^2 d\eta  + \int_0^B\partial_\eta(4h^3) G^k \partial_\eta G^k d\eta \ge 0.
	\end{align}
	Boundary conditions \eqref{eq:lm/b2b} implies 
	\begin{align}\label{eq:lm/b-3}
		\frac12\int_0^B\int_{-\pi}^\pi \sin\phi \partial_\eta (\varphi^k)^2 d\phi d\eta &=  \frac12 \int_{-\pi}^\pi \sin\phi (\varphi^k)^2 d\phi \bigg|_0^B\nonumber\\
		& = -\frac12 \int_{\sin\phi>0} |\sin\phi| (\varphi_b)^2 d\phi + \frac12 \int_{\sin\phi<0} |\sin\phi| (\varphi^k)^2(0,\phi) d\phi.
	\end{align}
	The relation $\partial_\eta G^{k-1}=\langle \sin\phi \varphi^{k-1}\rangle$ implies 
	\begin{align}\label{eq:lm/b-4}
		&\int_0^B \int_{-\pi}^\pi F(\eps;\eta) \cos\phi \partial_\phi \varphi^{k-1}\cdot \varphi^k d\phi d\eta - \int_0^B 4h^3 G^k\partial_\eta G^{k-1}d\eta \nonumber\\
		&\quad = \int_0^B \int_{-\pi}^\pi F(\eps;\eta) \cos\phi \partial_\phi \varphi^{k-1}\cdot \varphi^k d\phi d\eta - \int_0^B \int_{-\pi}^\pi \sin\phi \varphi^{k-1} 4h^3G^k d\phi d\eta \nonumber\\
		&\quad =  \int_0^B \int_{-\pi}^\pi F(\eps;\eta) \cos\phi \partial_\phi \varphi^{k-1}\cdot \varphi^k d\phi d\eta - \int_0^B \int_{-\pi}^\pi \cos\phi \partial_\phi \varphi^{k-1} \cdot 4h^3G^k d\phi d\eta \nonumber\\
		&\quad = \int_0^B \int_{-\pi}^\pi F(\eps;\eta) \cos\phi \partial_\phi \varphi^{k-1} \cdot (\varphi^k-4h^3 G^{k}) d\phi d\eta \nonumber\\
		&\quad \le \frac12 \int_0^B\int_{-\pi}^\pi(\varphi^k-4h^3 G^k)^2 d\phi d\eta+ \frac12 \int_{0}^B\int_{-\pi}^\pi |F(\eps;\eta)|^2 |\partial_\phi \varphi^{k-1}|^2 d\phi d\eta \nonumber \\
		&\quad \le  \frac12 \int_0^B\int_{-\pi}^\pi(\varphi^k-4h^3 G^k)^2 d\phi d\eta+ \frac12 \|F\|_{L^\infty(\mathbb{R}_+)} \|^2 \|\partial_\phi \varphi^{k-1}\|_{L^2([0,B]\times [-\pi,\pi))}^2 \nonumber\\
		&\quad \le   \frac12 \int_0^B\int_{-\pi}^\pi(\varphi^k-4h^3 G^k)^2 d\phi d\eta+ \frac{2}{4-3\delta} \eps   \|\partial_\phi \varphi^{k-1}\|_{L^2([0,B]\times [-\pi,\pi))}^2,
	\end{align}
	where the last inequality is due to \eqref{eq:Fin4}.

	Using the Young's inequality, the last term in \eqref{eq:lm/b-1} can be estimated by 
	\begin{align*}
		\int_0^B\int_{-\pi}^\pi S_2(\varphi^k - 4h^3 G^k) d\phi d\eta \le \frac14 \int_0^B \int_{-\pi}^\pi (\varphi^k - 4h^3 G^k)^2 d\phi d\eta + \|S_2\|_{L^2([0,B]\times [-\pi,\pi))}^2.
	\end{align*}
	Taking the above inequality with \eqref{eq:lm/b-2}-\eqref{eq:lm/b-4} into \eqref{eq:lm/b-1}, we obtain 
	\begin{align}
		&\frac14 \int_0^B \int_{-\pi}^\pi (\varphi^k - 4h^3 G^k)^2 d\phi d\eta + \frac12 \int_{\sin\phi<0} |\sin\phi| (\varphi^k)^2(0,\phi) d\phi \nonumber\\
		&\quad  \le \frac12 \int_{\sin\phi>0} |\sin\phi|(\varphi_b)^2 d\phi+\|S_2\|_{L^2([0,B]\times [-\pi,\pi))}^2 +  \frac{2}{4-3\delta} \eps   \|\partial_\phi \varphi^{k-1}\|_{L^2([0,B]\times [-\pi,\pi))}^2.
	\end{align}
	Hence, assume $\partial_\phi\varphi^{k-1} \in L^2([0,B]))$, one can get $\varphi^k-4h^3 G^k \in L^2([0,B]\times [-\pi,\pi))$. By equation \eqref{eq:lm/b1}, $\partial_\eta^2 G^k \in L^2([0,B])$ and so 
	\begin{align}
		G^k(\eta) = - \int_0^\eta \partial_s G^k(s) ds = \int_0^\eta \int_s^B \partial_\ell^2 G^k(\ell) d\ell ds \le \int_0^B \int_0^B |\partial_\eta^2 G^k| d\eta ds \le B\sqrt{B} \|\partial^2_\eta G\|_{L^2([0,B])},
	\end{align}
	and thus is bounded for any $\eta \in [0,B]$. Moreover, by the boundness of $h$, $\varphi^k=\varphi^k - 4h^3 G^k + 4h^3 G^k \in L^2([0,B]\times [-\pi,\pi))$. Note here since $\varphi^{k-1}(\eta,\cdot) \in C^\infty([-\pi,\pi))$, $\varphi^k(\eta,\cdot)\in C^\infty([-\pi,\pi))$, since the regularity on $\phi$ is preserved by system \eqref{eq:lm/b1}-\eqref{eq:lm/b2}.

	Therefore, we have shown that there exists a solution to the problem \eqref{eq:lm/b1}-\eqref{eq:lm/b2b} satisfying $(G^k,\varphi^k) \in L^2([0,B])\times L^2([0,B]\times [-\pi,\pi))$. Define the operator $\mathcal{L}: L^2([0,B])\times L^2([0,B]\times [-\pi,\pi))\mapsto  L^2([0,B])\times L^2([0,B]\times [-\pi,\pi))$ given by \eqref{eq:lm/b1}-\eqref{eq:lm/b2b} with $(G^k,\varphi^k) = \mathcal{L}((G^{k-1},\varphi^{k-1}))$. We can also get from the above derivation that $G^k \in H^2([0,B])$ and $\varphi^k\in H^1([0,B]\times [-\pi,\pi))$. Hence the operator $\mathcal{L}$ is continous and compact. In order to show $\mathcal{L}$ has a fixed point, we need to show the set 
	\begin{align}\label{eq:lm/b1/set}
		\{(G,\varphi)\in L^2([0,B])\times L^2([0,B]\times [-\pi,\pi)):(G,\varphi)=\sigma \mathcal{L}((G,\varphi))\text{ for some }0\le \sigma \le 1\}
	\end{align}
	is bounded. Then by the Schauder fixed-point theorem, there exists a fixed point of $\mathcal{L}$. To show the above set is bounded, let $(G^,\varphi)=\sigma \mathcal{L}((G,\varphi))$, then $(G,\varphi)$ solves 
	\begin{align}
		&\partial_\eta^2 G+ \sigma F(\eps;\eta)\partial_\eta G + \langle \varphi - 4h^3 G\rangle = S_1,\label{eq:lm/b1/1}\\
		&\sin\phi \partial_\eta \varphi + \sigma F(\eps;\eta) \cos\phi \partial_\phi \varphi + \varphi - 4h^3 G = S_2,\label{eq:lm/b1/2}
	\end{align}
	with boundary conditions 
	\begin{align*}
		&G(0)=0,\quad \partial_\eta G(B) =0, \\
		&\varphi(0,\phi) = \varphi_b(\phi),\quad \varphi(B,\phi) = \varphi(B,-\phi),\quad \text{for } \sin \phi>0.
	\end{align*}
	Note that $\partial_\eta \varphi=\langle \sin\phi \varphi\rangle$ also hold on $[0,B]$. Comparing equation \eqref{eq:lm/b1/1} to \eqref{eq:lm/b1/2} gives 
	\begin{align*}
		\partial_\eta (\partial_\eta G - \langle \sin\phi \varphi)\rangle) = -\sigma F(\eps;\eta) (\partial_\eta G - \langle \cos\phi \partial_\phi \varphi \rangle) = -\sigma  F(\eps;\eta) (\partial_\eta G - \langle \sin\phi \varphi)\rangle),
	\end{align*} 
	which together with the boundary condition $\partial_\eta G(B) - \langle \sin \phi \varphi(B,\cdot)\rangle = 0$, implies 
	\begin{align}\label{eq:lm/b1/=}
		\partial_\eta G=\langle \sin\phi \varphi\rangle.
	\end{align}

	We can estimate the above system similarly as the estimates of system \eqref{eq:lm/b1}-\eqref{eq:lm/b2} and the result is (see the estimate of \eqref{eq:lm/b-1})
	\begin{align*}
		&\frac34 \int_0^B \int_{-\pi}^\pi (\varphi - 4h^3 G)^2 d\phi d\eta + \frac12 \int_{\sin\phi<0} |\sin\phi| \varphi^2(0,\phi) d\phi \nonumber\\
		&\quad  \le \frac12 \int_{\sin\phi>0} |\sin\phi|(\varphi_b)^2 d\phi+\|S_2\|_{L^2([0,B]\times [-\pi,\pi))}^2 +  \int_0^B\int_{-\pi}^\pi F(\eps;\eta) \cos\phi \partial_\phi \varphi (\varphi-4h^3 G)d\phi d\eta.
	\end{align*}
	Integration by parts in $\phi$ on the last term gives 
	\begin{align*}
		&\int_0^B\int_{-\pi}^\pi F(\eps;\eta) \cos\phi \partial_\phi \varphi (\varphi-4h^3 G)d\phi d\eta \nonumber\\
		&\quad = \int_0^B\int_{-\pi}^\pi F(\eps;\eta) \sin\phi \varphi (\varphi - 4h^3 G) d\phi d\eta \nonumber\\
		&\quad = \frac12 \int_0^B \int_{-\pi}^\pi F(\eps;\eta) \sin\phi (\varphi-4h^3 G)^2 d\eta \nonumber\\
		&\quad \le \frac12 \|F\|_{L^\infty(\mathbb{R}_+)}  \int_0^B \int_{-\pi}^\pi (\varphi - 4h^3 G)^2 d\phi d\eta \nonumber\\
		&\quad \le \frac{2}{4-3\delta} \eps \int_0^B \int_{-\pi}^\pi (\varphi - 4h^3 G)^2 d\phi d\eta.
	\end{align*}
	Taking it into the previous inequality leads to  
	\begin{align*}
		&\left(\frac34 -  \frac{2}{4-3\delta} \eps\right) \int_0^B \int_{-\pi}^\pi (\varphi - 4h^3 G)^2 d\phi d\eta + \frac12 \int_{\sin\phi<0} |\sin\phi| \varphi^2(0,\phi) d\phi \nonumber\\
		&\quad  \le \frac12 \int_{\sin\phi>0} |\sin\phi|(\varphi_b)^2 d\phi+\|S_2\|_{L^2([0,B]\times [-\pi,\pi))}^2.
	\end{align*}
	For $\eps$ small, for example $\eps \le (4-3\delta)/8$, $\left(\frac34 -  \frac{2}{4-3\delta} \eps\right)\ge \frac14$, and the above estimate becomes 
	\begin{align}\label{eq:lm/b1/1/est}
		&\frac14\int_0^B \int_{-\pi}^\pi (\varphi - 4h^3 G)^2 d\phi d\eta + \frac12 \int_{\sin\phi<0} |\sin\phi| \varphi^2(0,\phi) d\phi \nonumber\\
		&\quad  \le \frac12 \int_{\sin\phi>0} |\sin\phi|(\varphi_b)^2 d\phi+\|S_2\|_{L^2([0,B]\times [-\pi,\pi))}^2.
	\end{align}
	Moreover, by equation \eqref{eq:lm/b1/1}, 
	\begin{align*}
		\|\partial_\eta^2 G\|_{L^2([0,B])} &\le \sigma \|F\partial_\eta G\|_{L^2([0,B])} + \|\langle (\varphi-4h^3 G)\rangle \|_{L^2([0,B])} + \|S_1\|_{L^2([0,B])} \nonumber\\
		&\le \sigma \|F\|_{L^\infty(\mathbb{R}_+)} \|\langle \sin\phi (\varphi - 4h^3 G)\rangle \|_{L^2[0,B]} +  \|\langle (\varphi-4h^3 G)\rangle \|_{L^2([0,B])} + \|S_1\|_{L^2([0,B])} \nonumber\\
		&\le \left(\frac{4\sigma}{4-3\delta}\eps +1 \right)\sqrt{2\pi}\|(\varphi-4h^3 G)\|_{L^2([0,B]\times [-\pi,\pi))} +  \|S_1\|_{L^2([0,B])} \nonumber\\
		&\le C \left(\int_{\sin\phi>0} |\sin\phi|(\varphi_b)^2 d\phi\right)^{\frac12}+C\|S_2\|_{L^2([0,B]\times [-\pi,\pi))} + \|S_1\|_{L^2([0,B])}.
	\end{align*}
	Hence $\|G\|_{H^2([0,B])}$ is bounded. Moreover $\varphi=(\varphi-4h^3G)+4h^3 G \in L^2([0,B]\times [-\pi,\pi))$ is bounded. Therefore, the set \eqref{eq:lm/b1/1} is bounded. By the Schauder fixed point theorem, there exists a fixed point $(G^\eps_B,\varphi^\eps_B) \in L^2([0,B])\times L^2([0,B]\times [-\pi,\pi))$ such that $(G^\eps_B,\varphi^\eps_B) = \mathcal{L}((G^\eps_B,\varphi^\eps_B))$. By the definition of $\mathcal{L}$, $(G^\eps_B,\varphi^\eps_B)$ solves system \eqref{eq:l1B}-\eqref{eq:l2B} with boundary conditions \eqref{eq:l1Bb}-\eqref{eq:l2Bb}.

	Taking $\sigma=1$ in the derivation of the estimate \eqref{eq:lm/b1/1/est} implies that $(G^\eps_B,\varphi_B^\eps)$ satisfy the estimate \eqref{eq:l1B/est} and finishes the proof.
\end{proof}

\subsection{Weighted estimate}
Next we derive the uniform weighted estimate on the solutions of system \eqref{eq:l1B}-\eqref{eq:l2B}. We have the following lemma.
\begin{lemma}
Assuming the hypothesis of Lemma \ref{lem.lB} hold and $(G_B^\eps,\varphi_B^\eps)$ is solution obtained from Lemma \ref{lem.lB}. Then for $0\le \beta'\le \min\{1/2,\beta,K\}$, the following estimate holds
	\begin{align}\label{eq:lm/11/est}
		&\frac18 \int_0^B \int_{-\pi}^\pi  e^{2\beta'\eta} (\varphi_B^\eps - 4h^3 G_B^\eps)^2 d\phi d\eta  + \frac12 \int_{\sin\phi<0} |\sin\phi| (\varphi_B^\eps)^2(0,\cdot) d\phi \nonumber\\
		&\quad  \le  \frac12 \int_{\sin\phi>0} \sin\phi \varphi_b^2 d\phi + \|e^{\beta'\eta} S_2\|_{L^2([0,B]\times [-\pi,\pi))}^2,
	\end{align}
	and 
	\begin{align}\label{eq:lm/11/est1}
		|G_B^\eps(\eta)| \le C
	\end{align}
	for some constant $C>0$ dependent on $\varphi_b, S_1, S_2$ but independent of $\eps$. Moreover, the solution satisfies 
	\begin{align}\label{eq:lm/11/est2}
		\partial_\eta G_B^\eps(\eta) = \langle \sin\phi \varphi^\eps_B(\eta,\cdot)\rangle,
	\end{align}
	for any $\eta\in [0,B]$.
\end{lemma}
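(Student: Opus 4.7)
\medskip

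\noindent\textbf{Proof plan.} The plan is to mimic the energy estimate carried out in the proof of Lemma \ref{lem.lB}, but with the exponential weight $e^{2\beta'\eta}$ inserted in the multipliers. Concretely, I would multiply \eqref{eq:l1B} by $-e^{2\beta'\eta}\cdot 4h^{3}G_{B}^{\eps}$ and \eqref{eq:l2B} by $e^{2\beta'\eta}\varphi_{B}^{\eps}$, integrate over $[0,B]\times[-\pi,\pi)$, and add the two identities. Writing $v:=\varphi_{B}^{\eps}-4h^{3}G_{B}^{\eps}$ and using $S_{1}=\langle S_{2}\rangle$, the right-hand side collapses to $\int\!\!\int e^{2\beta'\eta}vS_{2}\,d\phi d\eta$, which is controlled by Young's inequality as $\tfrac{1}{8}\|e^{\beta'\eta}v\|_{L^{2}}^{2}+\|e^{\beta'\eta}S_{2}\|_{L^{2}}^{2}$.

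\medskip

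\noindent For the second-derivative contribution, integration by parts together with $G_{B}^{\eps}(0)=0$ and $\partial_{\eta}G_{B}^{\eps}(B)=0$ gives
\[
-\!\int_{0}^{B}\! e^{2\beta'\eta}4h^{3}G_{B}^{\eps}\,\partial_{\eta}^{2}G_{B}^{\eps}\,d\eta
= 2\beta'\!\int_{0}^{B}\! e^{2\beta'\eta}4h^{3}G_{B}^{\eps}\partial_{\eta}G_{B}^{\eps}\,d\eta
+ \int_{0}^{B}\! e^{2\beta'\eta}\bigl(4h^{3}|\partial_{\eta}G_{B}^{\eps}|^{2}+\partial_{\eta}(4h^{3})G_{B}^{\eps}\partial_{\eta}G_{B}^{\eps}\bigr)d\eta,
\]
and the bracketed integral is non-negative by the weighted spectral inequality \eqref{eq:sp/ineq} (valid since $\beta'\le\beta$). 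The $\eta$-transport term produces the boundary contribution $-\tfrac12\!\int_{\sin\phi>0}\!\sin\phi\,\varphi_{b}^{2}+\tfrac12\!\int_{\sin\phi<0}\!|\sin\phi|\,(\varphi_{B}^{\eps})^{2}(0,\phi)$ (the $\eta=B$ term vanishes by the specular symmetry in \eqref{eq:l2Bb}), plus the bulk contribution $-\beta'\!\int\!\!\int e^{2\beta'\eta}\sin\phi\,\varphi^{2}$.

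\medskip

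\noindent The geometric-correction terms are handled as in Lemma \ref{lem.lB}: integration by parts in $\phi$ turns $F\cos\phi\,\varphi\partial_{\phi}\varphi$ into $\tfrac12 F\sin\phi\,\varphi^{2}$, and after invoking the identity $\partial_{\eta}G_{B}^{\eps}=\langle\sin\phi\,\varphi_{B}^{\eps}\rangle$ (see below) the $F\partial_{\eta}G$ term becomes $\int\!\!\int F\sin\phi\,(4h^{3}G)\varphi$; expanding $\varphi^{2}-2(4h^{3}G)\varphi=v^{2}-(4h^{3}G)^{2}$ and using $\langle\sin\phi\rangle=0$ collapses both $F$-contributions to $\tfrac12\!\int\!\!\int e^{2\beta'\eta}F\sin\phi\,v^{2}$. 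The key cancellation is that the leftover cross term $2\beta'\!\int e^{2\beta'\eta}4h^{3}G\partial_{\eta}G$ from the spectral-assumption step exactly cancels against the piece produced by decomposing $\int\!\!\int e^{2\beta'\eta}\sin\phi\,\varphi^{2}=\int\!\!\int e^{2\beta'\eta}\sin\phi\,v^{2}+2\!\int e^{2\beta'\eta}(4h^{3}G)\partial_{\eta}G$. What remains on the coercive side is $\int\!\!\int e^{2\beta'\eta}\bigl(1+\tfrac12F\sin\phi-\beta'\sin\phi\bigr)v^{2}\,d\phi d\eta$. Since $\|F\sin\phi\|_{L^{\infty}}\le 4\eps$ by \eqref{eq:Fin4} and $\beta'\le 1/2$, the prefactor is $\ge 1/4$ for $\eps$ sufficiently small, which together with the Young bound on the right-hand side delivers \eqref{eq:lm/11/est}.

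\medskip

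\noindent The relation \eqref{eq:lm/11/est2} is obtained beforehand by averaging \eqref{eq:l2B} in $\phi$, using $\langle\cos\phi\,\partial_{\phi}\varphi\rangle=\langle\sin\phi\,\varphi\rangle$ by periodicity, and subtracting from \eqref{eq:l1B}; the hypothesis $S_{1}=\langle S_{2}\rangle$ makes the source cancel and produces the first-order ODE $\partial_{\eta}w+Fw=0$ for $w:=\partial_{\eta}G_{B}^{\eps}-\langle\sin\phi\,\varphi_{B}^{\eps}\rangle$, with $w(B)=0$ coming from the specular boundary condition; hence $w\equiv 0$. Finally, $G_{B}^{\eps}(0)=0$ and $\partial_{\eta}G_{B}^{\eps}=\langle\sin\phi\,v\rangle$ yield, via Cauchy--Schwarz and the weighted bound \eqref{eq:lm/11/est}, $|G_{B}^{\eps}(\eta)|\le \sqrt{\pi}\,(2\beta')^{-1/2}\|e^{\beta'\eta}v\|_{L^{2}}\le C$, proving \eqref{eq:lm/11/est1}. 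The main obstacle is the coupling between the $\beta'$-weight and the $F$-correction terms: both generate indefinite cross terms, and the whole scheme works only because of the algebraic identity $\partial_{\eta}G_{B}^{\eps}=\langle\sin\phi\,\varphi_{B}^{\eps}\rangle$, which forces the dangerous terms to cancel pairwise rather than accumulate.
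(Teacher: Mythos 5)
Your argument for the weighted estimate \eqref{eq:lm/11/est} and for the identity \eqref{eq:lm/11/est2} is essentially the paper's proof: same multipliers $e^{2\beta'\eta}4h^3G_B^\eps$ and $e^{2\beta'\eta}\varphi_B^\eps$, same use of the spectral inequality \eqref{eq:sp/ineq} to discard the bracketed second-order contribution, and the same two cancellations driven by $\partial_\eta G_B^\eps=\langle\sin\phi\,\varphi_B^\eps\rangle$ and $\langle\sin\phi\rangle=0$ (collapsing the $F$-terms to $\tfrac12\int\!\!\int e^{2\beta'\eta}F\sin\phi\,v^2$ and absorbing the $2\beta'\int e^{2\beta'\eta}4h^3G\partial_\eta G$ leftover into $-\beta'\int\!\!\int e^{2\beta'\eta}\sin\phi\,v^2$).

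Two small remarks. First, your Young step $\int\!\!\int e^{2\beta'\eta}vS_2\le\tfrac18\|e^{\beta'\eta}v\|^2+\|e^{\beta'\eta}S_2\|^2$ is not a valid Young inequality (the correct pairing is $\tfrac18a^2+2b^2$), and your lower bound $1+\tfrac12F\sin\phi-\beta'\sin\phi\ge\tfrac14$ is not quite sharp enough to then recover the stated constants. The repair is trivial: the prefactor is in fact $\ge\tfrac12-2\eps\ge\tfrac38$ for $\eps$ small (this is what the paper uses, via $|F-2\beta'|\le\tfrac54$), and then Young with $\tfrac14a^2+b^2$ yields exactly $\tfrac18$ on the left and coefficient $1$ on $\|e^{\beta'\eta}S_2\|^2$. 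Second, for \eqref{eq:lm/11/est1} you genuinely diverge from the paper: you write $G_B^\eps(\eta)=\int_0^\eta\langle\sin\phi\,v\rangle ds$ and apply Cauchy--Schwarz against the weight directly, whereas the paper first bounds $\int_0^Be^{2\beta'\eta}|\partial_\eta^2G_B^\eps|^2d\eta$ from equation \eqref{eq:l1B}, integrates from $\eta=B$ using $\partial_\eta G_B^\eps(B)=0$ to get pointwise decay of $\partial_\eta G_B^\eps$, and then integrates once more. Your route is shorter and suffices for the boundedness claimed here; the paper's longer route additionally produces the exponential decay $|\partial_\eta G_B^\eps(\eta)|\le Ce^{-\beta'\eta}$, but that is not needed for this lemma's statement. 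Both are correct.
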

\begin{proof}
	First, we have $\partial_\eta G_B^\eps = \langle \sin\phi \varphi_B^\eps \rangle$ which is due to the same reason as \eqref{eq:lm/b1/=}. We multiply \eqref{eq:l1B} by $e^{2\beta'\eta }4h G_B^\eps$ and \eqref{eq:l2B} by $\varphi_B^\eps$ and integrate to obtain
	\begin{align}\label{eq:lB/1}
		&-\int_{0}^B e^{2\beta'\eta} 4h^3 G^\eps_B \partial_\eta^2 G^\eps_B d\eta + \frac12 \int_0^B \int_{-\pi}^\pi e^{2\beta'\eta}\sin\phi \partial_\eta (\varphi^\eps_B)^2 d\phi d\eta + \int_0^B \int_{-\pi}^\pi e^{2\beta' \eta} (\varphi^\eps_B - 4h^3 G^\eps_B)^2 d\phi d\eta \nonumber\\
		&\quad + \frac12 \int_0^B \int_{-\pi}^\pi e^{2\beta'\eta} F(\eps;\eta) \cos\phi \partial_\phi (\varphi^\eps_B)^2 d\phi d\eta - \int_0^B e^{2\beta'\eta} F(\eps;\eta)4h^3 G^\eps_B\partial_\eta G^\eps_B d\eta \nonumber\\
		&\qquad = \int_0^B\int_{-\pi}^\pi e^{2\beta'\eta} S_2(\varphi^\eps_B - 4h^3 G^\eps_B) d\phi d\eta.
	\end{align}
	Using integration by parts and the inequality \eqref{eq:sp/ineq} from the spectral assumption \ref{asA}, we can get 
	\begin{align*}
		&-\int_{0}^B e^{2\beta'\eta} 4h^3 G^\eps_B \partial_\eta^2 G^\eps_B d\eta \nonumber \\
		&\quad = \int_0^B e^{2\beta'\eta} 4h^3|\partial_\eta G_B^\eps|^2 d\eta + \int_0^B e^{2\beta'\eta} \partial_\eta (4h^3) G_B^\eps \partial_\eta G_B^\eps d\eta + 2\beta' \int_0^B e^{2\beta'\eta} 4h^3 G_B^\eps \partial_\eta G_B^\eps d\eta \nonumber\\
		&\quad \ge 2\beta'\int_0^B e^{\beta'\eta} 4h^3 G_B^\eps \partial_\eta G_B^\eps d\eta.
	\end{align*}
	Using integration by parts and the boundary conditions \eqref{eq:l2Bb}, we have 
	\begin{align*}
		&\frac12 \int_0^B \int_{-\pi}^\pi e^{2\beta'\eta}\sin\phi \partial_\eta (\varphi^\eps_B)^2 d\phi d\eta \\
		&\quad = - \beta'\int_0^B\int_{-\pi}^\pi e^{2\beta'\eta} \sin\phi (\varphi_B^\eps)^2 d\beta d\eta + \frac12 e^{2\beta'\eta} \int_{-\pi}^\pi \sin\phi (\varphi_B^\eps)^2(\eta,\cdot) d\phi\bigg|_0^B \nonumber\\
		&\quad = - \beta'\int_0^B\int_{-\pi}^\pi e^{2\beta'\eta} \sin\phi (\varphi_B^\eps)^2 d\beta d\eta - \frac12 \int_{\sin\phi>0} \sin\phi \varphi_b^2 d\phi + \frac12 \int_{\sin\phi<0} |\sin\phi| (\varphi_B^\eps)^2(0,\cdot) d\phi.
	\end{align*}
	The relation $\partial_\eta G_B^\eps = \langle \sin\phi \varphi_B^\eps \rangle$ gives 
	\begin{align*}
		&\frac12 \int_0^B \int_{-\pi}^\pi e^{2\beta'\eta} F(\eps;\eta) \cos\phi \partial_\phi (\varphi^\eps_B)^2 d\phi d\eta - \int_0^B e^{2\beta'\eta} F(\eps;\eta)4h^3 G^\eps_B\partial_\eta G^\eps_B d\eta \nonumber\\
		&\quad = \frac12 \int_0^B \int_{-\pi}^\pi e^{2\beta'\eta} F(\eps;\eta)\sin\phi (\varphi_B^\eps)^2 d\phi d\eta - \int_0^B\int_{-\pi}^\pi e^{2\beta'\eta} F(\eps;\eta) 4h^3 G^\eps_B \sin\phi \varphi_B^\eps d\phi d\eta \nonumber\\
		&\quad = \frac12 \int_0^B \int_{-\pi}^\pi e^{2\beta' \eta} F(\eps;\eta) \sin\phi (\varphi_B^\eps - G_B^\eps)^2 d\phi d\eta,
	\end{align*}
	and also 
	\begin{align*}
		&- \beta'\int_0^B\int_{-\pi}^\pi e^{2\beta'\eta} \sin\phi (\varphi_B^\eps)^2 d\beta d\eta +  2\beta'\int_0^B e^{2\beta'\eta} 4h^3 G_B^\eps \partial_\eta G_B^\eps d\eta \nonumber\\
		&\quad = - \beta' \int_0^B \int_{-\pi}^\pi \left(e^{2\beta'\eta} \sin\phi(\varphi_B^\eps)^2 - 2 e^{2\beta'\eta} 4h^3 G_B^\eps \sin\phi \varphi_B^\eps\right) d\phi d\eta \nonumber\\
		&\quad = - \beta' \int_0^B\int_{-\pi}^\pi e^{2\beta'\eta} \sin\phi (\varphi_B^\eps - 4h^3 G_B^\eps)^2 d\phi d\eta.
	\end{align*}
	The right-hand side of equation \eqref{eq:lB/1} can be estimated by 
	\begin{align*}
		\int_0^B\int_{-\pi}^\pi e^{2\beta'\eta} S_2(\varphi^\eps_B - 4h^3 G^\eps_B) d\phi d\eta \le \frac14 \int_0^B \int_{-\pi}^\pi e^{2\beta'\eta} (\varphi_B^\eps - 4h^3 G_B^\eps)^2 d\phi d\eta + \|e^{\beta'\eta} S_2\|_{L^2([0,B]\times [-\pi,\pi))}^2.
	\end{align*}
	Taking the above inequalities into \eqref{eq:lB/1}, we obtain
	\begin{align*}
		&\frac34\int_0^B \int_{-\pi}^\pi  e^{2\beta'\eta} (\varphi_B^\eps - 4h^3 G_B^\eps)^2 d\phi d\eta +\frac12 \int_0^B\int_{-\pi}^\pi (F(\eps;\eta)-2\beta') \sin\phi e^{2\beta'\eta} (\varphi_B^\eps - 4h^3 G_B^\eps)^2 d\phi d\eta \nonumber\\
		& \quad + \frac12 \int_{\sin\phi<0} |\sin\phi| (\varphi_B^\eps)^2(0,\cdot) d\phi \le  \frac12 \int_{\sin\phi>0} \sin\phi \varphi_b^2 d\phi + \|e^{\beta'\eta} S_2\|_{L^2([0,B]\times [-\pi,\pi))}^2.
	\end{align*}
	For $\eps$ sufficiently small, for example $\eps \le (4-3\delta)/16$, inequality \eqref{eq:Fin4} implies for any $\eta\in \mathbb{R}_+$,
	\begin{align}\label{eq:F/beta'}
		|F(\eps;\eta)-2\beta'| \le 2\beta' + \frac{4}{4-3\delta} \eps \le 1 + \frac14 = \frac54.
	\end{align}
	With this, the previous inequality implies \eqref{eq:lm/11/est}.

	Moreover, we can get from equation \eqref{eq:l1B} that 
	\begin{align*}
		&\int_0^B e^{2\beta'\eta} |\partial_\eta^2 G_B^\eps|^2 d\eta \nonumber
		\\
		&\quad \le \int_0^B e^{2\beta' \eta} |F(\eps;\eta) \partial_\eta G_B^\eps|^2d\eta + \int_0^B e^{2\beta' \eta} |\langle \varphi_B^\eps - 4h^3 G_B^\eps\rangle |^2  d\eta + \int_0^B e^{2\beta'\eta} |S_1|^2 d\eta \nonumber\\
		&\quad \le \int_0^B |F(\eps;\eta)|^2 e^{2\beta'\eta} \langle \sin\phi (\varphi_B^\eps - 4h^3 G_B^\eps)\rangle d\eta + \int_0^B \left(\int_{-\pi}^\pi e^{2\beta'\eta} (\varphi_B^\eps-4h^3 G_B^\eps )^2 d\phi \cdot \int_{-\pi}^\pi d\phi\right) d\eta \nonumber\\&\qquad + \int_0^B e^{2\beta'\eta}|S_1|^2 d\eta \nonumber\\
		&\quad \le \|F\|_{L^\infty(\mathbb{R}_+)}^2 2\pi \int_0^B \int_{-\pi}^\pi e^{2\beta'\eta} (\varphi_B^\eps - 4h^3 G_B^\eps)^2 d\phi d\eta + 2\pi \int_0^B \int_{-\pi}^\pi e^{2\beta'\eta} (\varphi_B^\eps - 4h^3 G_B^\eps)^2 d\phi d\eta \nonumber\\
		&\qquad + \int_0^B e^{2\beta'\eta}|S_1|^2 d\eta \nonumber\\
		&\quad \le 2\pi\left(\frac{4}{4-3\delta}\eps+1\right) \int_0^B \int_{-\pi}^\pi e^{2\beta'\eta}(\varphi_B^\eps - 4h^3 G_B^\eps)^2 d\phi d\eta +\int_0^B e^{2\beta'\eta}|S_1|^2 d\eta.
	\end{align*}
	For $\eps$ sufficiently small, for example $\eps \le (4-3\delta)/16$, the above inequality becomes 
	\begin{align*}
		&\int_0^B e^{2\beta'\eta} |\partial_\eta^2 G_B^\eps|^2 d\eta \nonumber\\
		&\quad \le \frac{5}{2}\pi  \int_0^B \int_{-\pi}^\pi e^{2\beta'\eta}(\varphi_B^\eps - 4h^3 G_B^\eps)^2 d\phi d\eta +\int_0^B e^{2\beta'\eta}|S_1|^2 d\eta \nonumber\\
		&\quad \le  10\pi \int_{\sin\phi>0} \sin\phi \varphi_b^2 d\phi + 20\pi \|e^{\beta'\eta} S_2\|_{L^2([0,B]\times [-\pi,\pi))}^2 + \|e^{\beta'\eta} S_1\|_{L^2([0,B])}^2,
	\end{align*}
	and hence is uniformly bounded in $B$. Let $C_1:= 10\pi \int_{\sin\phi>0} \sin\phi \varphi_b^2 d\phi + 20\pi \|e^{\beta'\eta} S_2\|_{L^2([0,B]\times [-\pi,\pi))}^2 + \|e^{\beta'\eta} S_1\|_{L^2([0,B])}^2$. Using the bounded condition $\partial_\eta G_B^\eps (B)=0$, we have for any $\eta\in[0,B]$,
	\begin{align*}
		|\partial_\eta G_B^\eps(\eta)| &= -\left|\int_\eta^B \partial_s^2 G_B^\eps(s) ds\right| = \left|\int_\eta^B e^{-\beta' s} e^{\beta' s}  \partial_s^2 G_B^\eps(s) ds\right| \nonumber\\
		&\le \left(\int_\eta^B e^{-2\beta's} ds\right)^{\frac12} \int_0^B e^{2\beta's} |\partial_s^2 G_B^\eps(s)|^2 ds \nonumber\\
		&\le C_1 \frac{\sqrt{e^{-2\beta'\eta}-e^{-2\eta'B }}}{\sqrt{2\beta'}} \le C_1 (2\beta')^{-\frac12} e^{-\beta'\eta}.
	\end{align*}
	Using the boundary condition $G_B^\eps(0)=0$, we have 
	\begin{align*}
		|G_B^\eps(\eta)| &= \left|\int_0^\eta \partial_s G_B^\eps(s)ds\right| \le \int_0^\eta |\partial_s G_B^\eps(s)| ds \le C_1 (2\beta')^{-\frac12}\int_0^\eta e^{- \beta's} ds \nonumber\\
		&\le \sqrt{2}C_1 (\beta')^{-\frac32} (1-e^{-\beta'\eta}) \le \sqrt{2}C_1 (\beta')^{-\frac32}.
	\end{align*}

	\subsection{Existence on the half line} 
	Next we pass to the limit $B\to\infty$ in \eqref{eq:l1B}-\eqref{eq:l2B} to show the existence for system \eqref{eq:l1}-\eqref{eq:l2} on the half line. 
	\begin{proof}[Proof of Theorem \ref{thm.linearmilne}]
		By the uniform estimate \eqref{eq:lm/11/est1}, $G_B^\eps \in L^\infty_{\rm loc}([0,\infty))$. Hence there exists a subsequence such that 
		\begin{align}\label{eq:hl/1}
			&G_B^\eps \rightharpoonup^* G^\eps,\quad \text{weakly star in }L_{\rm loc}^\infty([0,\infty)),\\
			&G_B^\eps \rightharpoonup G^\eps,\quad \text{weakly  in }L_{\rm loc}^2([0,\infty)).\nonumber
		\end{align}
		Moreover, by the estimate \eqref{eq:lm/11/est}, $\varphi_B^\eps - 4h^3 G_B^\eps$ is uniformly bounded in $L^2_{\rm loc}([0,\infty);L^2([-\pi,\pi)))$, hence 
		\begin{align*}
			\varphi_B^\eps - 4h^3 G_B^\eps \rightharpoonup \varphi^\eps - 4h^3 G^\eps,\quad \text{weakly in }L^2_{\rm loc}([0,\infty);L^2([-\pi,\pi))),
		\end{align*}
		and so
		\begin{align}\label{eq:hl/2}
			\varphi_B^\eps \rightharpoonup \varphi^\eps ,\quad \text{weakly in }L^2_{\rm loc}([0,\infty);L^2([-\pi,\pi))).
		\end{align}
		With \eqref{eq:hl/1} and \eqref{eq:hl/2}, we can pass to the limit in \eqref{eq:l1B}-\eqref{eq:l2B} with boundary conditions \eqref{eq:l1Bb}-\eqref{eq:l2Bb} and obtain that $(G^\eps,\varphi^\eps)$ solve system \eqref{eq:l1}-\eqref{eq:l2} in the sense of distributions and satisfy the boundary conditions \eqref{eq:l1b}-\eqref{eq:l2b}. Moreover, we can pass to the limit $B\to\infty$ in \eqref{eq:lm/11/est2} and obtain 
		\begin{align}\label{eq:hl/=}
			\partial_\eta G^\eps = \langle \sin\phi \varphi^\eps(\eta,\cdot)\rangle
		\end{align}
		for $\eta \in \mathbb{R}_+$ almost everywhere. By the semi-lower continuity of the $L^2$ norm, we can take $\liminf$ in \eqref{eq:lm/11/est} and use 
		\begin{align*}
			\liminf_{B\to\infty} \int_0^B \int_{-\pi}^\pi e^{2\beta'\eta} (\varphi_B^\eps-4h^3G_B^\eps)^2 d\phi d\eta \ge \int_0^\infty\int_{-\pi}^\pi e^{2\beta'\eta} (\varphi^\eps-4h^3 G^\eps)^2 d\phi d\eta
		\end{align*}
		and obtain \eqref{eq:linearmilne/est}.
	\end{proof}

	\subsection{Decay properties of the solutions}
	We can use the above derived uniform estimates to show that solutions to system \eqref{eq:l1}-\eqref{eq:l2} decay to constants as $\eta\to\infty$. We set 
	\begin{align*}
		C_2:= 4\int_{\sin\phi>0} \sin\phi \varphi_b^2 d\phi + 8\|e^{\beta'\eta} S_2\|_{L^2([0,\infty)\times [-\pi,\pi))}^2.
	\end{align*}
	Then 
	\begin{align}
		\int_0^\infty \int_{-\pi}^\pi e^{2\beta'\eta} (\varphi^\eps-4h^3 G^\eps)^2d\phi d\eta \le C_2.
	\end{align}
	By the relation \eqref{eq:hl/=}, we get 
	\begin{align*}
		\int_{0}^\infty e^{2\beta'\eta}|\partial_\eta G^\eps|^2 d\eta &= \int_0^\infty e^{2\beta'\eta}\left|\int_{-\pi}^\pi \sin\phi (\varphi^\eps) d\phi \right|^2d\eta = \int_0^\infty e^{2\beta'\eta} \left|\int_{-\pi}^\pi \sin\phi (\varphi^\eps -4h^3 G^\eps) d\phi \right|^2d\eta \nonumber\\
		& \le \int_0^\infty e^{2\beta'\eta} \left(\int_{-\pi}^\pi\sin^2\phi  d\phi\right) \left(\int_{-\pi}^\pi (\varphi^\eps-4h^3 G^\eps)^2 d\phi\right)d\eta \nonumber\\
		& = \pi \int_0^\infty \int_{-\pi}^\pi e^{2\beta'\eta} (\varphi^\eps-4h^3 G^\eps)^2d\phi \le \pi C_2.
	\end{align*}
	Therefore, $\lim_{\eta\to\infty} G^\eps(\eta)$ exists and is finite. To show this, we can take $M_2>M_1>0$ to be some large numbers and 
	\begin{align*}
		|G^\eps(M_1)-G^\eps(M_2)| &= \left|\int_{M_1}^{M_2} \partial_\eta G^\eps d\eta\right| \le \int_{M_1}^{M_2} e^{-\beta'\eta} e^{\beta'\eta} |\partial_\eta G^\eps|d\eta \nonumber\\
		&\le \left(\int_{M_1}^{M_2} e^{-2\beta'\eta}d\eta\right)^\frac12 \left(\int_{M_1}^{M_2} e^{2\beta'\eta}|\partial_\eta G^\eps|^2 d\eta \right)^{\frac12},\nonumber\\
		& = \sqrt{(e^{-2\beta' M_1}-e^{-2\beta'M_2})}\pi\sqrt{ C_2/(2\beta')}.
	\end{align*}
	Since for $M_1,M_2$ sufficiently large, $e^{-2\beta' M_1}-e^{-2\beta'M_2}$ is sufficiently small. $\lim_{\eta\to\infty} G^\eps(\eta)$ exists. Let $G^\eps_\infty=\lim_{\eta\to\infty}G^\eps(\eta)$. Then we take $M_1=\eta$, $M_2=\infty$ in the above inequality and obtain 
	\begin{align*}
		|G^\eps(\eta)-G^\eps_\infty| \le \pi \sqrt{C_2/(2\beta')} e^{-\beta'\eta}.
	\end{align*}
	Moreover, taking $M_1=0$, $M_2=\infty$ gives 
	\begin{align*}
		|G^\eps_\infty| \le \pi\sqrt{C_2/(2\beta')}.
	\end{align*}
	Due to the exponentially convergence of $h$, we have 
	\begin{align}
		|4h^3(\eta)G^\eps(\eta)-4h_\infty^3 G^\eps_\infty| &\le 4 h^3(\eta)|G^\eps(\eta)-G^\eps_\infty| + 4|h^3(\eta)-h_\infty^3||G^\eps_\infty| \le C e^{-\alpha \eta} + C e^{-\beta'\eta} \nonumber\\
		&\le 2C e^{-\beta'\eta},
	\end{align}
	taking $1>\alpha>\beta'$.

	To show the exponential convergence of $\varphi^\eps$, we use the formulas \eqref{eq:varphi--1}, \eqref{eq:varphi--2} and \eqref{eq:varphi--3}. 	
	In the case $\sin\phi>0$, we have 
	\begin{align*}
		|\varphi^\eps(\eta,\phi) - 4h_{\infty}^3 G_{\infty}^\eps| &= \left| \int_0^\eta \frac{4h^3(\xi) G^\eps(\xi)+S_2(\xi,\phi'(\phi,\eta,\xi))}{\sin \phi'(\phi,\eta,\xi)} e^{-\int_\xi^\eta \frac{1}{\sin \phi'(\phi,\eta,\rho)}d\rho} d\xi - 4h^3_\infty G_\infty^\eps\right| \nonumber\\
		&\le\int_0^\eta \frac{|4h^3(\xi)G^\eps(\xi)-4h_\infty^3 G^\eps_\infty|}{\sin \phi'(\phi,\eta,\xi)} e^{-\int_\xi^\eta \frac{1}{\sin \phi'(\phi,\eta,\rho)}d\rho} d\xi \nonumber\\
		&\quad + \int_0^\eta \frac{|S_2(\xi,\phi'(\phi,\eta,\xi))|}{\sin \phi'(\phi,\eta,\xi)} e^{-\int_\xi^\eta \frac{1}{\sin \phi'(\phi,\eta,\rho)}d\rho} d\xi \nonumber\\
		&\quad  +\left|1- \int_0^\eta \frac{1}{\sin\phi'(\phi,\eta,\xi)} e^{-\int_\xi^\eta \frac{1}{\sin \phi'(\phi,\eta,\rho)}d\rho} d\xi\right| |4h_\infty^3 G_\infty^\eps| \nonumber\\
		&\quad \le 2C \int_0^\eta \frac{e^{-\beta'\xi}}{\sin \phi'(\phi,\eta,\xi)} e^{-\int_\xi^\eta \frac{1}{\sin \phi'(\phi,\eta,\rho)}d\rho} d\xi + C e^{-\int_0^\eta \frac{1}{\sin \phi'(\phi,\eta,\rho)}d\rho} \nonumber \nonumber\\
		&\quad \le 2C \int_0^\eta \frac{1}{1-\beta' \sin \phi'(\phi,\eta,\xi)} d e^{-\int_\xi^\eta \frac{1}{\sin \phi'(\phi,\eta,\rho)}-\frac12\beta'\xi} + C e^{-\int_0^\eta \frac{1}{\sin \phi'(\phi,\eta,\rho)}d\rho} \nonumber\\
		&\quad \le \frac{2C}{1-\beta'}(e^{-\beta'\eta}-e^{-\int_\xi^\eta \frac{1}{\sin \phi'(\phi,\eta,\rho)}}) + C e^{-\int_0^\eta \frac{1}{\sin \phi'(\phi,\eta,\rho)}d\rho} \nonumber\\
		&\quad \le \frac{3C}{1-\beta'} e^{-\beta'\eta},
	\end{align*}
	where we use $\beta'<1/2$ and $\sin\phi' \le 1$ in the above inequality. One can also check for $\sin\phi<0$, there exists positive constants $C>0$, 
	\[
	|\varphi^\eps(\eta,\phi) - 4h_{\infty}^3 G_{\infty}^\eps| \le C e^{-\beta'\eta},
	\]
	which follows a similar estimate as the case for $\sin\phi>0$ above and the details are similar to the proof of \eqref{eq:non/conv}.

	\subsection{Uniqueness}
	In order to show the uniqueness, let $(G_1^\eps,\varphi_1^\eps)$ and $(G_2^\eps,\varphi_2^\eps)$ be two solutions of the problem \eqref{eq:l1}-\eqref{eq:l2b}.  Let $g:=G_1^\eps-G_2^\eps$ and $\psi:=\varphi_1^\eps-\varphi_2^\eps$, then they satisfy
	\begin{align}
		&\partial_\eta^2 g + F(\eps;\eta)\partial_\eta g + \langle \psi - 4h^3 g\rangle = 0,\label{eq:hl/01}\\
		&\sin\phi \partial_\eta \psi + F(\eps;\eta)\cos\phi \partial_\phi \psi + \psi - 4h^3 g =0,\label{eq:hl/02}
	\end{align}
	with boundary conditions 
	\begin{align*}
		g(0) = 0,\quad \psi(0,\phi) = 0,\quad \text{for any }\sin\phi>0.
	\end{align*}
	Moreover, as $\eta\to\infty$, $G_1^\eps\to G_{1,\infty}$, $G_2^\eps \to G_{2,\infty}$ and $\psi \to 4h_\infty^3 (G_{1,\infty}-G_{2,\infty})$, where $G_{1,\infty}$, $G_{2,\infty}$ are constants.

	We multiply the first equation by $4h^3g$ and the second by $\psi$, and integrate over $[0,\infty)\times [-\pi,\pi)$ to get 
	\begin{align}\label{eq:uq1}
		&\int_0^\infty 4h^3 |\partial_\eta g|^2 d\eta + \int_0^\infty \partial_\eta(4h^3)g \partial_\eta g d\eta - \int_0^\infty F(\eps;\eta) 4h^3g \partial_\eta g d\eta + \frac12 \int_0^\infty \int_{-\pi}^\pi F(\eps;\eta)\cos\phi \partial_\phi \psi^2 d\phi d\eta\nonumber\\
		&\quad + \int_{-\pi}^0 |\sin\phi| \psi^2(0,\cdot) d\phi + \int_0^\infty \int_{-\pi}^\pi (\psi-4h^3g)^2d\phi d\eta =0 .
	\end{align}
	From the relation \eqref{eq:hl/=}, we have $\partial_\eta g = \langle \sin\phi \varphi \rangle$. Thus 
	\begin{align*}
		&-\int_0^\infty F(\eps;\eta) 4h^3 g \partial_\eta g d\eta + \frac12\int_0^\infty \int_{-\pi}^\pi F(\eps;\eta) \cos\phi \partial_\phi \psi^2 d\phi d\eta \nonumber\\
		&\quad = -\int_0^\infty \int_{-\pi}^\pi F(\eps;\eta) 4h^3 g \sin\phi \varphi d\phi d\eta + \frac12 \int_0^\infty \int_{-\pi}^\pi F(\eps;\eta) \sin\phi \psi^2 d\phi d\eta \nonumber\\
		&\quad = -\frac12 \int_0^\infty\int_{-\pi}^\pi F(\eps;\eta) \sin\phi (\psi-4h^3g)^2 d\phi d\eta.
	\end{align*}
	The spectral assumption implies 
	\begin{align*}
		\int_0^\infty 4h^3 |\partial_\eta g|^2 d\eta + \int_0^\infty \partial_\eta(4h^3)g \partial_\eta g d\eta  \ge 0.
	\end{align*}
	Therefore, we get from \eqref{eq:uq1} that 
	\begin{align*}
		\int_{-\pi}^0 |\sin\phi| \psi^2(0,\cdot) d\phi +\int_0^\infty \int_{-\pi}^\pi \left(1+\frac12 F(\eps;\eta)\right) (\psi-4h^3g)^2d\phi d\eta=0.
	\end{align*}
	For $\eta$ sufficiently small, $1+\tfrac12 F(\eps;\eta)>0$ for any $\eta\in \mathbb{R}_+$, hence the above equation implies $\psi=4h^3g$ almost everywhere on $\mathbb{R}_+\times [-\pi,\pi)$ and $\psi(0,\phi)=0$ for almost every $\phi \in [-\pi,\pi)$. Hence $\partial_\eta g = \langle \sin\phi \psi\rangle = \langle \sin\phi 4h^3g \rangle=0$, which with $g(0)=0$ implies $g=0$ on $\mathbb{R}_+$. We also have $\psi=4h^3g=0$. Hence system \eqref{eq:hl/01}-\eqref{eq:hl/02} only has trival solution and the solution to the problem \eqref{eq:l1}-\eqref{eq:l2b}.


	
\end{proof}
\begin{rem}
	Compared to the flat boundary case without geometric corrections \cite{Bounadrylayer2019GHM2}, the effect of the geometric corrections could be seen from \eqref{eq:F/beta'}. for $\eps$ not sufficiently small, one cannot get a $L^2$ estimate on $\|e^{\beta'\eta}(\varphi^\eps - 4h^3 G^\eps)\|_{L^2(\mathbb{R}_+\times[-\pi,\pi])}$. However, for $\eps$ sufficiently small, the geometric correction contributes to a small error of this $L^2$ norm so does not affect the boundness of this norm, which is crucial for proving Theorem \ref{thm.linearmilne}.
\end{rem}

\section{Convergence of the diffusive limit}\label{sec5}
\subsection{Errors of the approximation}
Next we calculate the approximation errors.
	Using systems \eqref{eq:interior/1}-\eqref{eq:interior/3} and \eqref{eq:g0}-\eqref{eq:g2}, we can calculate 
	\begin{align}\label{eq:R1p}
		&\mathcal{R}_1\left(\sum_{k=0}^N \eps^k(T_k^\eps+\bar{T}_k^\eps),\sum_{k=0}^N \eps^k(\psi_k^\eps + \bar{\psi}_k^\eps)\right)\nonumber\\
		&\quad= \varepsilon^2 \Delta \sum_{k=0}^N \varepsilon^k (T_k^\eps + \bar{T}_k^\eps) + \left\langle \sum_{k=0}^N \varepsilon^k (\psi_k^\eps + \bar{\psi}_k^\eps) + \left(\sum_{k=0}^N \varepsilon^k(T_k^\eps+\bar{T}_k^\eps)\right)^4 \right\rangle \nonumber\\
		&\quad =\varepsilon^2 \Delta \sum_{k=0}^N \varepsilon^k T_k^\eps + \sum_{k=0}^N\varepsilon^k \langle \psi_k^\eps \rangle + \partial_\eta^2 \sum_{k=0}^N \varepsilon^k \bar{T}_k^\eps +  F(\eps;\eta)\partial_\eta \sum_{k=0}^N\varepsilon^k \bar{T}_k^\eps - \frac{\chi_0\eps^2}{(1-\eps\eta)^2}\partial_\theta^2  \sum_{k=0}^N\varepsilon^k \bar{T}_k^\eps \nonumber\\
		&\qquad+ \left\langle \sum_{k=0}^N \varepsilon^k  \bar{\psi}_k^\eps - \left(\sum_{k=0}^N \varepsilon^k(T_k^\eps+\bar{T}_k^\eps)\right)^4 \right\rangle  \nonumber\\
		&\quad=\mathcal{R}_1\left(\sum_{k=0}^N \eps^k T_k^\eps,\sum_{k=0}^N \eps^k \psi_k^\eps\right) + \left\langle \left(\sum_{k=0}^N \varepsilon^k T_k^\eps\right)^4\right\rangle \nonumber\\
		&\qquad + \sum_{k=0}^N \eps^k(\partial_\eta^2\bar{T}_k^\eps - \frac{\eps}{1-\eps\eta}\partial_\eta \bar{T}_{k}^\eps + \langle \bar{\psi}_k^\eps - \mathcal{C}(T^\eps+\bar{T}^\eps,k)\rangle + \frac{1}{(1-\eps\eta)^2}\partial_\theta^2 \bar{T}_{k-2}^\eps)  \nonumber\\
		&\qquad + \frac{\eps^{N+1}}{(1-\eps\eta)^2}\partial_\theta^2 \bar{T}_{N-1}^\eps + \frac{\eps^{N+2}}{(1-\eps\eta)^2} \partial_\theta^2 \bar{T}_{N}^\eps - \sum_{k=N+1}^{4N}  \varepsilon^k \langle\mathcal{C}(T^\eps+\bar{T}^\eps,k) \rangle \nonumber\\
		&\quad=\mathcal{R}_1\left(\sum_{k=0}^N \eps^k T_k,\sum_{k=0}^N \eps^k \psi_k\right) - \sum_{k=0}^N \eps^k \langle \mathcal{C}(T^\eps+\bar{T}^\eps,k) - \mathcal{C}(T^\eps,k)\rangle \nonumber\\
		&\qquad - \sum_{k=N+1}^{4N} \eps^k\langle \mathcal{C}(T^\eps+\bar{T}^\eps,k) - \mathcal{C}(T^\eps,k)\rangle \nonumber\\
		&\qquad + \frac{\eps^{N+1}}{(1-\eps\eta)^2}\partial_\theta^2 \bar{T}_{N-1}^\eps + \frac{\eps^{N+2}}{(1-\eps\eta)^2} \partial_\theta^2 \bar{T}_{N}^\eps + \sum_{k=0}^N \eps^k \langle \mathcal{C}(\bar{T}^\eps+P^\eps,k) - \mathcal{C}(P^\eps,k) \rangle\nonumber\\
		&\qquad + \sum_{k=0}^N \eps^k (\partial_\eta^2 \bar{T}_k^\eps - \frac{\eps}{(1-\eps\eta)}\partial_\eta \bar{T}_{k}^\eps +\frac{1}{(1-\eps\eta)^2}\partial_\theta^2 \bar{T}_{k-2}^\eps  + \langle \bar{\psi}_k^\eps - \mathcal{C}(\bar{T}^\eps+P^\eps,k) + \mathcal{C}(P^\eps,k) \rangle),
	\end{align}
	and 
	\begin{align}\label{eq:R2p}
		&\mathcal{R}_2\left(\sum_{k=0}^N \eps^k(T_k^\eps+\bar{T}_k^\eps),\sum_{k=0}^N \eps^k(\psi_k^\eps + \bar{\psi}_k^\eps)\right)\nonumber\\
		&\quad = \varepsilon \beta\cdot \nabla \sum_{k=0}^N \varepsilon^k (T_k^\eps + \bar{T}_k^\eps) +  \sum_{k=0}^N \varepsilon^k (\psi_k^\eps + \bar{\psi}_k^\eps) - \left(\sum_{k=0}^N \varepsilon^k(T_k^\eps+\bar{T}_k^\eps)\right)^4  \nonumber\\
		&\quad = \varepsilon \beta\cdot \nabla \sum_{k=0}^N \eps^k \psi_k^\eps + \sum_{k=0}^N\varepsilon^k  \psi_k^\eps  +\sin\phi \partial_\eta \sum_{k=0}^N \varepsilon^k \bar{\psi}_k^\eps -\frac{\eps}{1-\eps\eta}\cos\phi\partial_\phi \sum_{k=0}^N\varepsilon^k \bar{\psi}_k^\eps \nonumber\\
		&\qquad - \frac{\eps}{1-\eps\eta} \cos\phi \partial_\theta \sum_{k=0}^N \eps^k \bar{\psi}_k^\eps +  \sum_{k=0}^N \varepsilon^k  \bar{\psi}_k^\eps - \left(\sum_{k=0}^N \varepsilon^k(T_k^\eps+\bar{T}_k^\eps)\right)^4   \nonumber\\
		&\quad=\mathcal{R}_2\left(\sum_{k=0}^N \eps^k T_k^\eps,\sum_{k=0}^N \eps^k \psi_k^\eps\right) +  \left(\sum_{k=0}^N \varepsilon^k T_k^\eps\right)^4 \nonumber\\
		&\qquad + \sum_{k=0}^N \eps^k(\sin\phi \partial_\eta\bar{\psi}_k^\eps -\frac{\eps}{1-\eps\eta}\cos\phi \partial_\phi \bar{\psi}_{k}^\eps +  \bar{\psi}_k^\eps - \mathcal{C}(T^\eps+\bar{T}^\eps,k) - \frac{1}{(1-\eps\eta)}\cos\phi\partial_\theta \bar{\psi}_{k-1})  \nonumber\\
		&\qquad  - \frac{\eps^{N+1}}{1-\eps\eta}\cos\phi \partial_\theta \bar{\psi}_N^\eps- \sum_{k=N+1}^{4N} \varepsilon^k \mathcal{C}(T^\eps+\bar{T}^\eps,k) \nonumber\\
		&\quad=\mathcal{R}_2\left(\sum_{k=0}^N \eps^k T_k^\eps,\sum_{k=0}^N \eps^k \psi_k^\eps\right) -\sum_{k=0}^N \eps^k (\mathcal{C}(T^\eps+\bar{T}^\eps,k) - \mathcal{C}(T^\eps,k)) - \sum_{k=N+1}^{4N} \eps^k( \mathcal{C}(T^\eps+\bar{T}^\eps,k) - \mathcal{C}(T^\eps,k)) \nonumber\\
		&\qquad   - \frac{\eps^{N+1}}{1-\eps\eta}\cos\phi \partial_\theta \bar{\psi}_N^\eps + \sum_{k=0}^N \eps^k ( \mathcal{C}(\bar{T}^\eps+P^\eps,k) - \mathcal{C}(P^\eps,k) )  \nonumber\\
		&\qquad +\sum_{k=0}^N \eps^k (\sin\phi \partial_\eta \bar{\psi}_k^\eps -\frac{\eps}{1-\eps\eta}\cos\phi \partial_\phi \bar{\psi}_{k}^\eps - \frac{1}{1-\eps\eta} \partial_\theta \bar{\psi}^\eps_{k-1}+  \bar{\psi}_k^\eps - \mathcal{C}(\bar{T}^\eps+P^\eps,k) + \mathcal{C}(P^\eps,k) ).	
	\end{align}
	By the definition \eqref{eq:g0} of $(\bar{T}_0^\eps,\bar{\psi}_0^\eps)$, we get (we drop the superscripts $\eps$ in the calculations below) 
	\begin{align}\label{eq:e00}
		E_0^0:=&\partial_\eta^2 \bar{T}_0 -\frac{\eps}{1-\eps\eta}\partial_\eta \bar{T}_0+ \langle\bar{\psi}_0 - (\bar{T}_0+T_0(0))^4 + T_0^4(0) \rangle  \nonumber\\
		=&\partial_\eta^2 (\chi(\tilde{T}_0-\tilde{T}_{0,\infty})) -\frac{\eps}{1-\eps\eta}\partial_\eta (\chi(\tilde{T}_0-\tilde{T}_{0,\infty})) + \langle \chi(\tilde{\psi}_0-\tilde{\psi}_{0,\infty}) - (\bar{T}_0+T_0(0))^4 + T_0^4(0) \rangle \nonumber\\
		 = &(\tilde{T}_0-\tilde{T}_{0,\infty})\partial_\eta^2\chi + 2\partial_\eta\chi \partial_\eta \tilde{T}_0 + \chi \partial_\eta^2\tilde{T}_0 - \frac{\eps }{(1-\eps\eta)}\partial_\eta \chi (\tilde{T}_0-\tilde{T}_{0,\infty}) \nonumber\\&-\frac{\eps}{(1-\eps\eta)} \chi \chi_0 \partial_\eta\tilde{T}_0 + \langle \chi(\tilde{\psi}_0-\tilde{\psi}_{0,\infty}) - (\bar{T}_0+T_0(0))^4 + T_0^4(0) \rangle \nonumber\\
		= &(\tilde{T}_0-\tilde{T}_{0,\infty})\partial_\eta^2\chi + 2\partial_\eta\chi \partial_\eta \tilde{T}_0 -\frac{\eps}{1-\eps\eta} \partial_\eta \chi (\tilde{T}_0 - \tilde{T}_{0,\infty}) + \chi(\partial_\eta^2 \tilde{T}_0  + F(\eps;\eta)  \partial_\eta\tilde{T}_0 + \langle \tilde{\psi}_0 - \tilde{\psi}_{0,\infty}\rangle) \nonumber\\&- \langle (\bar{T}_0+T_0(0))^4 - T_0^4(0)\rangle \nonumber\\
		 =& (\tilde{T}_0-\tilde{T}_{0,\infty})\partial_\eta^2\chi + 2\partial_\eta\chi \partial_\eta \tilde{T}_0 - \frac{\eps}{1-\eps\eta}\partial_\eta \chi (\tilde{T}_0 -\tilde{T}_{0,\infty}) \nonumber\\
		 &+ \chi \langle \tilde{T}_0^4-\tilde{T}_{0,\infty}^4\rangle - \langle (\chi(\tilde{T}_0-\tilde{T}_{0,\infty}) + \tilde{T}_{0,\infty})^4 - \tilde{T}_{0,\infty}^4 \rangle,
	\end{align}
	where we use the fact that $\partial_\eta^2 \chi =0$ and $\chi\chi_0=\chi$. We can also calculate 
	\begin{align}\label{eq:e01}
		E_0^1:=& \sin\phi \partial_\eta \bar{\psi}_0 - \frac{\eps}{1-\eps\eta}\cos\phi \partial_\phi \bar{\psi}_0 +  \bar{\psi}_0 - (\bar{T}_0 + T_0(0))^4 + T_0^4(0) \nonumber\\
		=& \sin\phi (\tilde{\psi}_0-\tilde{\psi}_{0,\infty}) \partial_\eta \chi + \chi(\sin\phi \partial_\eta \tilde{\psi}_0 + F(\eps;\eta)\cos\phi\partial_\phi \tilde{\psi}_0 + \tilde{\psi}_0 - \tilde{T}_0^4) \nonumber\\
		& + \chi(\tilde{T}_0^4 - \tilde{T}_{0,\infty}^4)  - ((\chi(\tilde{T}_0-\tilde{T}_{0,\infty}) + \tilde{T}_{0,\infty})^4 - \tilde{T}_{0,\infty}^4)\nonumber\\
		=& \sin\phi (\tilde{\psi}_0-\tilde{\psi}_{0,\infty}) \partial_\eta \chi + \chi(\tilde{T}_0^4 - \tilde{T}_{0,\infty}^4)  - ((\chi(\tilde{T}_0-\tilde{T}_{0,\infty}) + \tilde{T}_{0,\infty})^4 - \tilde{T}_{0,\infty}^4).
	\end{align}
	Similarly, by \eqref{eq:g2}, we get 
	\begin{align}\label{eq:ek0}
		E_{k}^0:=& \partial_\eta^2 \bar{T}_k - \frac{\eps}{1-\eps\eta}\partial_\eta \bar{T}_k + \frac{1}{(1-\eps\eta)^2} \partial_\theta^2 \bar{T}_{k-2} + \langle \bar{\psi}_k - \mathcal{C}(\bar T+P,k)+\mathcal{C}(P,k)\rangle \\
		=& \partial_\eta^2(\chi(\tilde{T}_k-\tilde{T}_{k,\infty})) - \frac{\eps}{1-\eps\eta}\partial_\eta (\chi (\tilde{T}_{k}-\tilde{T}_{k,\infty})) + \frac{1}{(1-\eps\eta)^2} \partial_\theta^2 \bar{T}_{k-2} \nonumber\\
		&+ \langle \chi(\tilde{\psi}_k - \tilde{\psi}_{k,\infty}) - 4(P_0+\bar{T}_0)^3 (P_k+\bar{T}_k) + 4P_0^3 P_k \rangle - \langle \mathcal{E}(P+\bar{T},k-1) - \mathcal{E}(P,k-1) \rangle \nonumber\\
		=& (\tilde{T}_k-\tilde{T}_{k,\infty})\partial_\eta^2 \chi + 2 \partial_\eta \chi \partial_\eta \tilde{T}_k -\frac{\eps}{1-\eps\eta}\partial_\eta \chi (\tilde{T}_k - \tilde{T}_{k,\infty}) \nonumber\\
		&+ \chi\bigg\langle\partial_\eta^2 \tilde{T}_k + F(\eps;\eta)\partial_\eta \tilde{T}_k + \tilde{\psi}_k - 4\tilde{T}_0^3 \tilde{T}_k - 4(\tilde{T}_0^3-P_0^3)(P_k-P_k(0)) - \mathcal{E}(\bar T+P,k-1)\nonumber\\
		& + \mathcal{E}(P,k-1) + \frac{\chi_0}{(1-\eps\eta)^2}  \partial_\theta^2\bar{T}_{k-2} \bigg\rangle  + \chi\langle 4\tilde{T}_0^3\tilde{T}_k-4\tilde{T}_{0,\infty}^3\tilde{T}_{k,\infty}\rangle \nonumber\\
		&- \langle 4(P_0+\bar{T}_0)^3(P_k+\bar{T}_k) - 4P_0^3P_k - \chi(4(\tilde{T}_0^3-P_0^3)(P_k-P_k(0)))\rangle \nonumber\\
		& - (1-\chi)\langle \mathcal{E}(\bar{T}+P,k-1) - \mathcal{E}(P,k-1)\rangle + \frac{1-\chi}{(1-\eps\eta)^2} \partial_\theta^2 \bar{T}_{k-2} \\
		=&(\tilde{T}_k-\tilde{T}_{k,\infty})\partial_\eta^2 \chi + 2 \partial_\eta \chi \partial_\eta \tilde{T}_k -\frac{\eps}{1-\eps\eta}\partial_\eta \chi (\tilde{T}_k - \tilde{T}_{k,\infty}) + \chi\langle 4\tilde{T}_0^3\tilde{T}_k-4\tilde{T}_{0,\infty}^3\tilde{T}_{k,\infty}\rangle \nonumber\\
		&- \langle 4(P_0+\bar{T}_0)^3(P_k+\bar{T}_k) - 4P_0^3P_k - \chi(4(\tilde{T}_0^3-P_0^3)(P_k-P_k(0)))\rangle \nonumber\\
		& - (1-\chi)\langle \mathcal{E}(\bar{T}+P,k-1) - \mathcal{E}(P,k-1)\rangle + \frac{1-\chi}{(1-\eps\eta)^2} \partial_\theta^2 \bar{T}_{k-2} 
	\end{align}
	and 
	\begin{align}\label{eq:ek1}
		E_k^1:=& \sin\phi \partial_\eta \bar\psi_k - \frac{\eps}{1-\eps\eta} \cos \phi \partial_{\phi} \bar{\psi}_k - \frac{1}{1-\eps\eta}\cos\phi\partial_\theta \bar{\psi}_{k-1} + \bar{\psi}_k - \mathcal{C}(\bar{T}+P,k) + \mathcal{C}(P,k) \nonumber\\
		=& \sin\phi \partial_\eta \chi (\tilde{\psi}_k-\tilde{\psi}_{k,\infty}) + \chi \bigg(\sin\phi\partial_\eta \tilde{\psi}_k + F(\eps;\eta)\cos\phi \partial_\eta \tilde{\psi}_k - \frac{\chi_0}{(1-\eps\eta)}\partial_\theta\bar{\psi}_{k-1}\nonumber\\
		&+ \tilde{\psi}_k - 4\tilde{T}_0^3 \tilde{T}_k - 4(\tilde{T}_0^3-P_0^3)(P_k-P_k(0)) - \mathcal{E}(\bar T+P,k-1) + \mathcal{E}(P,k-1) \bigg) \nonumber\\
		& + \chi(4\tilde{T}_0^3\tilde{T}_k-4\tilde{T}_{0,\infty}^3\tilde{T}_{k,\infty}) - (4(P_0+\bar{T}_0)^3(P_k+\bar{T}_k) - 4P_0^3P_k - \chi(4(\tilde{T}_0^3-P_0^3)(P_k-P_k(0)))) \nonumber\\
		& - (1-\chi)( \mathcal{E}(\bar{T}+P,k-1) - \mathcal{E}(P,k-1)) - \frac{1-\chi}{1-\eps\eta}\cos\phi\partial_\theta \bar{\psi}_{k-1},\nonumber\\
		=&\sin\phi \partial_\eta \chi (\tilde{\psi}_k-\tilde{\psi}_{k,\infty}) + \chi(4\tilde{T}_0^3\tilde{T}_k-4\tilde{T}_{0,\infty}^3\tilde{T}_{k,\infty}) \nonumber\\
		&- (4(P_0+\bar{T}_0)^3(P_k+\bar{T}_k) - 4P_0^3P_k - \chi(4(\tilde{T}_0^3-P_0^3)(P_k-P_k(0)))) \nonumber\\
		& - (1-\chi)( \mathcal{E}(\bar{T}+P,k-1) - \mathcal{E}(P,k-1)) - \frac{1-\chi}{1-\eps\eta}\cos\phi\partial_\theta \bar{\psi}_{k-1}.
	\end{align}
	Using the formulas \eqref{eq:R1p} and \eqref{eq:R2p}, we get from the above equations  
	\begin{align}\label{eq:R1b}
		&\mathcal{R}_1(T^{a,\eps},\psi^{a,\eps})\nonumber\\
		&\quad=  \eps^{N+1} \Delta T_{N-1} + \eps^{N+2} \Delta T_N^\eps   - \sum_{k=0}^N \eps^k \langle \mathcal{C}(T^\eps+\bar{T}^\eps,k) - \mathcal{C}(T^\eps,k) - \mathcal{C}(\bar{T}^\eps+P^\eps,k) + \mathcal{C}(P^\eps,k)\rangle \nonumber\\
		&\qquad- \sum_{k=N+1}^{4N} \eps^k\langle \mathcal{C}(T+\bar{T},k) \rangle  +\frac{\eps^{N+1}}{(1-\eps\eta)^2}\partial_\theta^2 \bar{T}_{N-1}^\eps + \frac{\eps^{N+2}}{(1-\eps\eta)^2} \partial_\theta^2 \bar{T}_{N}^\eps + \sum_{k=0}^N \eps^k E_k^0,
	\end{align}
	and 
	\begin{align}\label{eq:R2b}
		\mathcal{R}_2(T^{a,\eps},\psi^{a,\eps}) &=  \eps^{N+1} \beta\cdot \nabla \psi_{N}^\eps  - \sum_{k=0}^N \eps^k ( \mathcal{C}(T^\eps+\bar{T}^\eps,k) - \mathcal{C}(T^\eps,k) - \mathcal{C}(\bar{T}^\eps+P^\eps,k) + \mathcal{C}(P^\eps,k)) \nonumber\\
		&\quad- \sum_{k=N+1}^{4N} \eps^k(\mathcal{C}(T^\eps+\bar{T}^\eps,k))  - \frac{\eps^{N+1}}{1-\eps\eta}\cos\phi \partial_\theta \bar{\psi}_N^\eps  + \sum_{k=0}^N \eps^k E_k^1.
	\end{align}	

\subsection{Estimation on the approximation errors}\label{sec52}
The approximation errors calculated are expected to be small for large $N$. Here we estimate on the terms in \eqref{eq:R1b}-\eqref{eq:R2b} and prove the following lemma. 
\begin{lemma}
	Assume the boundary data in \eqref{eq:1b}-\eqref{eq:2b} satisfy $T_b \in C^2(\Omega)$, $\psi_b \in C^1(\Gamma_-)$. Let $(T^{a,\eps},\psi^{a,\eps})$ be the approximate solution constructed in section \ref{sec:2}. Assume the solution $\tilde{T}_0^\eps$  to \eqref{eq:g0} satisfy the spectral assumption \ref{asA} and $\tilde{T}_0^\eps \ge a$ for some constant $a>0$. Then for sufficient large $N$,
	\begin{align}
		&\|\mathcal{R}_1(T^a,\psi^a)\|_{L^2(\Omega)}, ~\|\mathcal{R}_1(T^a,\psi^a)\|_{L^\infty(\Omega)} \le C \eps^{N+1} + (N+2) Ce^{-\frac{\lambda \delta}{4\eps}},\label{eq:err1/est}\\
		&\|\mathcal{R}_2(T^a,\psi^a)\|_{L^2(\Omega\times\mathbb{S}^2)}, ~\|\mathcal{R}_2(T^a,\psi^a)\|_{L^\infty(\Omega\times\mathbb{S}^2)} \le C \eps^{N+1} + (N+2) Ce^{-\frac{\lambda \delta}{4\eps}}.\label{eq:err2/est}
	\end{align}
	for some constant $0\le \lambda \le 1/2$, and $C>0$ a constant independent of $\eps$.
\end{lemma}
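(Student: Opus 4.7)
The plan is to bound term-by-term the explicit expressions \eqref{eq:R1b} and \eqref{eq:R2b} for $\mathcal R_1$ and $\mathcal R_2$, sorting the contributions into three categories: (i) the explicit high-order polynomial remainders of order $\eps^{N+1}$; (ii) the \emph{cutoff remainders} $E_k^0,E_k^1$ and the tail factors $(1-\chi)$, which are localized to the annulus $\eps\eta\in[\delta/4,3\delta/8]$ where the exponential decay of the Milne solutions takes over; and (iii) the \emph{combinatorial difference} $\mathcal C(T^\eps+\bar T^\eps,k)-\mathcal C(T^\eps,k)-\mathcal C(\bar T^\eps+P^\eps,k)+\mathcal C(P^\eps,k)$, which is the main subtle term. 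Throughout I would use the exponential decay estimates $|\tilde T_k^\eps-\tilde T_{k,\infty}|,|\tilde\psi_k^\eps-\tilde\psi_{k,\infty}|\le Ce^{-\lambda\eta}$ provided by Theorems~\ref{thm.nm} and \ref{thm.linearmilne}, and the elliptic regularity of the interior solutions $(T_k^\eps,\psi_k^\eps)$ defined by \eqref{eq:interior/1}--\eqref{eq:interior/3}.

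For category (i), the terms $\eps^{N+1}\Delta T_{N-1}^\eps$, $\eps^{N+2}\Delta T_N^\eps$, $\eps^{N+1}\beta\cdot\nabla\psi_N^\eps$, the two terms $\eps^{N+1}(1-\eps\eta)^{-2}\partial_\theta^2\bar T_{N-1}^\eps$, $\eps^{N+2}(1-\eps\eta)^{-2}\partial_\theta^2\bar T_N^\eps$, the remainder $\eps^{N+1}(1-\eps\eta)^{-1}\cos\phi\,\partial_\theta\bar\psi_N^\eps$, and the tail $\sum_{k=N+1}^{4N}\eps^k\langle\mathcal C(T^\eps+\bar T^\eps,k)\rangle$ are bounded in $L^\infty$ by $C\eps^{N+1}$: on the support of the relevant cutoffs $1-\eps\eta\ge 1/4$, and all factors are uniformly bounded by Theorems~\ref{thm.nm} and \ref{thm.linearmilne} together with the smoothness of the interior solutions. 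For category (ii), every piece of $E_k^0,E_k^1$ in \eqref{eq:e00}--\eqref{eq:ek1} carries a factor of $\partial_\eta\chi$, $\partial_\eta^2\chi$, $(1-\chi)$, or the algebraic combination $(\chi(\tilde T_0-\tilde T_{0,\infty})+\tilde T_{0,\infty})^4-\tilde T_{0,\infty}^4-\chi(\tilde T_0^4-\tilde T_{0,\infty}^4)$ that vanishes when $\chi\in\{0,1\}$. These factors are supported in $\eta\ge\delta/(4\eps)$, so the exponential decay yields the bound $Ce^{-\lambda\delta/(4\eps)}$; the $(N+2)$ prefactor in \eqref{eq:err1/est}-\eqref{eq:err2/est} is simply the number of expansion orders contributing such terms.

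The hard part is category (iii). To extract an $\eps^{N+1}$ factor from each contribution of
\[
\sum_{k=0}^N \eps^k\big[\mathcal C(T^\eps+\bar T^\eps,k)-\mathcal C(T^\eps,k)-\mathcal C(\bar T^\eps+P^\eps,k)+\mathcal C(P^\eps,k)\big],
\]
I would view $\mathcal C(\cdot,k)$ as a symmetric 4-linear form and use the identity $\mathcal C(a+b,k)-\mathcal C(a,k)=\sum_{i+j+l+m=k}\big[b_ia_ja_la_m+a_ib_ja_la_m+\ldots\big]$ to rewrite the combination as a bilinear expression in the \emph{two} differences $a=T^\eps-P^\eps$ (the Taylor remainder of the interior solutions) and $b=\bar T^\eps$ (the boundary-layer corrector). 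On the support of $\chi$ one has $\eps\eta\le 3\delta/8\le 1$, and Taylor's theorem for each sufficiently regular $T_m^\eps$ gives $|T_m^\eps(\eps\eta,\theta)-P_m^\eps(\eta,\theta)|\le C(\eps\eta)^{N+1-m}$. Multiplying with the exponentially decaying factor $|\bar T_j^\eps(\eta)|\le Ce^{-\lambda\eta}$ and using $\eta^{N+1-m}e^{-\lambda\eta}\le C_{N,m}$ shows that each summand is bounded by $C\eps^{N+1}$ in $L^\infty$. The main technical obstacle is keeping the combinatorial book-keeping clean enough that every product has at least one Taylor-remainder factor $a_m$ and at least one boundary-layer factor $b_j$ to combine polynomial $\eps^{N+1}\eta^{N+1-m}$ growth with exponential decay.

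Combining the three estimates gives the $L^\infty$ bounds \eqref{eq:err1/est}-\eqref{eq:err2/est}; the $L^2$ bounds on $\Omega$ and $\Omega\times\mathbb S^1$ follow immediately from the boundedness of the domain. The same argument applies verbatim to $\mathcal R_2$ by using the transport version $E_k^1$ in place of $E_k^0$ and the fact that $\|\cos\phi\,\partial_\theta\bar\psi_N^\eps\|_\infty$ is controlled in the boundary layer by the geometric-correction Milne estimates of Section~\ref{sec4}. This completes the proof plan.
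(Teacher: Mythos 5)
Your proposal follows essentially the same route as the paper: isolate the $O(\eps^{N+1})$ polynomial and high-order remainders, bound the cutoff-supported pieces of $E_k^0,E_k^1$ by $Ce^{-\lambda\delta/(4\eps)}$ using the exponential decay of the Milne solutions from Theorems \ref{thm.nm} and \ref{thm.linearmilne} together with the support of $\partial_\eta\chi$ and $1-\chi$ in $\eps\eta\ge\delta/4$, and then sum over the $N+2$ orders. The only portion the paper does not write out (deferring to the flat case in \cite{Bounadrylayer2019GHM2}) is your category (iii), which you handle correctly up to the minor bookkeeping point that the Taylor-remainder bound holds for the resummed differences $\sum_j\eps^j\bigl(T_j(\eps\eta)-\sum_{l\le N-j}\tfrac{(\eps\eta)^l}{l!}\partial_r^lT_j(0)\bigr)$ rather than term-by-term as $T_m^\eps-P_m^\eps$, since $P_m^\eps$ involves derivatives of $T_{m-l}^\eps$ rather than of $T_m^\eps$.
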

\begin{proof}
	The proof of the lemma follows from the decay property of solutions for the nonlinear Milne and linear Milne problems (Theorem \ref{thm.nm} and Theorem \ref{thm.linearmilne}). Below we give the estimates on the terms related to geometric corrections. Other terms also appear in the flat case and the estimates can be found in \cite{Bounadrylayer2019GHM2}. Since the solutions to Milne problems with geometric corrections decay to constant solutions exponentially, same as the Milne problem without geometric corrections in \cite{Bounadrylayer2019GHM2}, these terms can be estimated in the same way.

	First by Theorem \ref{thm.nm} and Theorem \ref{thm.linearmilne}, there exists a constant $0\le \lambda<1/2$ such that 
	\begin{align*}
		|\tilde{T}_k^\eps(\eta)-\tilde{T}_{k,\infty}^\eps| \le C e^{-\lambda\eta},\quad |\tilde{\psi}_k^\eps(\eta)-\tilde{\psi}_{k,\infty}^\eps| \le C e^{-\lambda\eta}.
	\end{align*}
	By the definition of $\bar{T}_k^\eps,\bar{\psi}_k^\eps$
	The terms involving $\partial_\theta^2$ in \eqref{eq:R1b}-\eqref{eq:R2b} can be estimated by 
	\begin{align*}
		\left|\frac{\eps^{N+1}}{(1-\eps\eta)^2}\partial_\theta^2 \bar{T}_{N-1}^\eps\right| + \left|\frac{\eps^{N+2}}{(1-\eps\eta)^2} \partial_\theta^2 \bar{T}_{N}^\eps\right |+\left| \frac{\eps^{N+1}}{1-\eps\eta}\cos\phi \partial_\theta \bar{\psi}_N^\eps\right| \le C \eps^{N+1},
	\end{align*}
	since $\bar{T}_k^\eps,\bar{\psi}_k^\eps$ are bounded functions. For the term $E_0^0$ given in \eqref{eq:e00}, the term related to geometric correction can be estimated by 
	\begin{align*}
		\left|\frac{\eps}{1-\eps\eta}\partial_\eta\chi (\tilde{T}_0^\eps-\tilde{T}_{0,\infty}^\eps)\right| \le \frac{C\eps}{1-\eps\eta} 1_{\tfrac14\delta \le \eps\eta\le \tfrac38\delta}e^{-\lambda\eta} \le \frac{C}{1-\tfrac38\delta} e^{-\frac{\lambda\delta}{4\eps}}.
	\end{align*}
	The term $E_0^1$ is the same as the case without geometric correction \cite{Bounadrylayer2019GHM2}. For $E_{k}^0$, the related terms are 
	\begin{align*}
		&\left|-\frac{\eps}{1-\eps\eta} \partial_\eta \chi (\tilde{T}_k^\eps-\tilde{T}_{k,\infty}^\eps)\right| + \left|\frac{1-\chi}{(1-\eps\eta)^2}\partial_\theta^2 \bar{T}_{k-2}^\eps\right| \nonumber\\
		&\quad \le \frac{C\eps}{1-\eps\eta} 1_{\tfrac14\delta \le \eps\eta\le \tfrac38\delta}e^{-\lambda\eta}  + \frac{1}{(1-\eps\eta)^2}1_{\tfrac14\delta \le\eps\eta\le \tfrac38\delta} e^{-\lambda\eta} \nonumber\\
		&\quad \le \frac{C}{1-\tfrac38\delta} e^{-\frac{\lambda\delta}{4\eps}} + \frac{C}{(1-\tfrac38\delta)^2} e^{-\frac{\lambda\delta}{4\eps}}.
	\end{align*}
	The geometric correction related term in $E_k^1$ can be estimated similarly by 
	\begin{align*}
		\left|-\frac{1-\chi}{1-\eps\eta}\cos\phi\partial_\theta \bar{\psi}_{k-1}\right| \le \frac{C}{1-\tfrac38\delta} e^{-\frac{\lambda\delta}{4\eps}}.
	\end{align*}
	Combing the above estimates, we can obtain

\end{proof}

\subsection{Proof of Theorem \ref{thm:mainresult}}
The proof of Theorem \ref{thm:mainresult} is based on Banach fixed point theorem by showing that system \eqref{eq:1}-\eqref{eq:2} has a unique solution around $(T^{a,\eps},\psi^{a,\eps})$ and is given in \cite[section 4]{Bounadrylayer2019GHM2}. A crucial step in the proof is based on the following lemma.
\begin{lemma} \label{lm.g2}
    Let $(T^{a,\eps},\psi^{a,\eps})$ be the approximate solution constructed in the section \ref{sec:2}. Assuming the spectral assumption \ref{asA} holds for the solution $\tilde{T}_0^\eps$ of the nonlinear Milne problem \eqref{eq:g0} and $\tilde{T}_0^\eps\ge a$ for some constant $a>0$, and $T^{a,\eps}$ and $\tilde{T}^\eps_0$ are positive. The the following inequality holds
    \begin{align}\label{eq:est>g2}
       - \int_\Omega 4(T^{a,\eps})^3 g \Delta g dx = \int_\Omega 4(T^{a,\eps})^3|\nabla g|^2 dx - \int_\Omega \nabla (4(T^{a,\eps})^3)\cdot g \nabla g dx  \ge \kappa \int_\Omega |\nabla g|^2 dx  -C \|g\|_{L^2(\Omega)}^2,
    \end{align}
for any function $g$ satisfying $g(0)=0$ and for some constants $\kappa>0$, $C>0$ depending on $M$, where $M<1$ is the constant  of the spectral assumption.
\end{lemma}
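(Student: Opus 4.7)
The plan is to obtain the coercivity estimate by combining a direct integration by parts, a spatial decomposition into a boundary-layer region and an interior region, and a careful application of the spectral assumption \ref{asA} in the rescaled normal variable near the boundary. The starting point is the identity
\begin{align*}
    -\int_\Omega 4(T^{a,\eps})^3 g\,\Delta g\,dx
    = \int_\Omega 4(T^{a,\eps})^3 |\nabla g|^2\,dx
    + \int_\Omega g\,\nabla(4(T^{a,\eps})^3)\cdot \nabla g\,dx,
\end{align*}
which follows from one integration by parts once we use $g|_{\partial\Omega}=0$. The goal is then to absorb the cross term into the first term up to a harmless $L^2$-remainder.

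Next I would localize: pick a smooth cutoff $\rho=\rho(r)$ equal to $1$ on $\{r\le \tfrac{3}{8}\delta\}$ and supported in $\{r\le \tfrac{1}{2}\delta\}$, which contains the support of $\bar T_k^\eps$. On the complement $\{\rho<1\}$ the approximate solution reduces to the smooth interior expansion $T^{a,\eps}=T_0^\eps+O(\eps)$, so $|\nabla(4(T^{a,\eps})^3)|\le C$ uniformly in $\eps$, and Cauchy--Schwarz plus the positivity bound $T^{a,\eps}\ge a/2$ (inherited from $\tilde T_0^\eps\ge a$ for $\eps$ small) immediately yield
\begin{align*}
    \int_{\{\rho<1\}} 4(T^{a,\eps})^3|\nabla g|^2\,dx + \int_{\{\rho<1\}} g\,\nabla(4(T^{a,\eps})^3)\cdot\nabla g\,dx
    \ge \kappa_1\int_{\{\rho<1\}}|\nabla g|^2\,dx - C\|g\|_{L^2}^2.
\end{align*}
The real work is on the support of $\rho$, where I would pass to the boundary-fitted coordinates $(r,\theta)$ and then to the stretched variable $\eta=r/\eps$. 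In those coordinates
$|\nabla g|^2=|\partial_r g|^2+(1-r)^{-2}|\partial_\theta g|^2$ and $\nabla(4(T^{a,\eps})^3)$ decomposes into a normal part $\partial_r(4(T^{a,\eps})^3)=\eps^{-1}\partial_\eta(4(T^{a,\eps})^3)$ and a tangential part that stays bounded because the interior expansion and the Milne solutions depend smoothly on $\theta$. The tangential cross term is again handled by Cauchy--Schwarz with a small constant times $\int(1-r)^{-2}|\partial_\theta g|^2$ plus $C\|g\|_{L^2}^2$, so everything reduces to controlling the normal cross term.

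For the normal cross term, after the change of variables and using $T^{a,\eps}=\tilde T_0^\eps+O(\eps)$ on $\mathrm{supp}\,\rho$, we reduce to estimating for each fixed $\theta$
\begin{align*}
    \int_0^{3\delta/(8\eps)} \partial_\eta(4(\tilde T_0^\eps)^3)\,f\,\partial_\eta f\,(1-\eps\eta)\,d\eta,
    \qquad f(\eta,\theta):=g(\eps\eta,\theta),
\end{align*}
against $\int 4(\tilde T_0^\eps)^3|\partial_\eta f|^2(1-\eps\eta)\,d\eta$. Since $g|_{\partial\Omega}=0$ we have $f(0,\theta)=0$, so the spectral assumption \ref{asA} applied to $h=\tilde T_0^\eps$ with $\beta=0$ gives (using $|\partial_\eta(2h^{3/2})|^2=\tfrac14 |\partial_\eta(4h^3)|^2/(4h^3)$)
\begin{align*}
    \int \frac{|\partial_\eta(4\tilde T_0^{\eps,3})|^2}{4\tilde T_0^{\eps,3}}\,f^2\,d\eta \;\le\; M\int 4\tilde T_0^{\eps,3}|\partial_\eta f|^2\,d\eta,
\end{align*}
and then Cauchy--Schwarz yields the normal cross term bounded by $\sqrt{M}\int 4\tilde T_0^{\eps,3}|\partial_\eta f|^2(1-\eps\eta)d\eta$, which is strictly less than the positive term because $M<1$. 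Integrating in $\theta$, undoing the rescaling, replacing $\tilde T_0^\eps$ by $T^{a,\eps}$ (the $O(\eps)$ difference produces only a factor $(1+O(\eps))$ in the coefficient, and its $\eta$-derivative, coming from the interior part $\eps\partial_\eta P_1+\ldots$, is $O(\eps)$ in $L^\infty$, giving an $O(\eps)\|g\|_{L^2}\|\nabla g\|_{L^2}\le (\kappa/4)\|\nabla g\|_{L^2}^2+C\|g\|_{L^2}^2$ correction), and recombining with the interior estimate produces the claimed inequality with $\kappa=(1-\sqrt{M})\cdot 4a^3/2$.

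The hard part is the last step: honestly justifying the replacement of $T^{a,\eps}$ by $\tilde T_0^\eps$ inside the spectral inequality and absorbing the Jacobian $(1-\eps\eta)$, the boundary-layer cutoff $\chi$, and the contribution of $\eps\bar T_1^\eps,\eps^2\bar T_2^\eps,\ldots$ into controlled remainders. One has to check that the corrections to $\partial_\eta(4h^3)$ are either $O(\eps)$ in $L^\infty$ (giving $L^2$ remainders via Young's inequality) or exponentially decaying in $\eta$ on the cutoff scale, which follows from the decay estimates in Theorem \ref{thm.nm} and Theorem \ref{thm.linearmilne}. Once these perturbations are quantified, the constant $\kappa$ is only infinitesimally smaller than $(1-\sqrt{M})\cdot 4a^3$ and stays positive uniformly in $\eps$, proving \eqref{eq:est>g2}.
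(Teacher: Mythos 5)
Your proposal is correct and follows essentially the same route as the paper: one integration by parts, a decomposition into an interior region (where the cutoff kills the boundary layer and the coefficient is uniformly $C^1$) and a boundary-layer region, a further split there into tangential and normal derivatives, and an application of the spectral assumption \ref{asA} in the stretched variable to the leading normal cross term with $h=\tilde T_0^\eps$, the replacement $T^{a,\eps}\mapsto \tilde T_0^\eps$ being controlled by $O(\eps)$ and $O(\delta)$ remainders absorbed for $\eps,\delta$ small. The only (immaterial) difference is that you close the main term via Cauchy--Schwarz, yielding a coercivity factor $1-\sqrt{M}$, whereas the paper uses Young's inequality and obtains $(1-M)/2$; both are positive since $M<1$.
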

\begin{proof}
   Note that $T^{a,\eps}=\sum_{k=0}^N \eps^k(T_k^\eps + \bar{T}_k^\eps)$ where $\bar{T}_k^\eps = \chi(1-r)(\tilde{T}_k^\eps-\tilde{T}_{k,\infty}^\eps)$. In the domain $1-r>\tfrac38 \delta$, $\chi(1-r)=0$ and $T^{a,\eps} = \sum_{k=0}^N \eps^k T_k^\eps$, which only contain the interior approximations. Since $\|T_k^\eps\|_{C^s(\Omega)}$ is bounded for any $s>0$ and $k=1,\ldots,N$,
   \begin{align}
    &\int_{\Omega\cap \{1-r>\tfrac38\delta\}} 4(T^{a,\eps})^3|\nabla g|^2 dx - \int_{\Omega\cap \{1-r>\tfrac38\delta\}}  \nabla (4(T^{a,\eps})^3)\cdot g \nabla g dx \nonumber\\
    &\quad = \int_{\Omega\cap \{1-r>\tfrac38\delta\}} 4(T^{a,\eps})^3|\nabla g|^2 dx - \int_{\Omega\cap \{1-r>\tfrac38\delta\}} 2 (T^{a,\eps})^{3/2}\nabla g \cdot 6 (T^{a,\eps})^{1/2} \nabla T^{a,\eps}  g dx \nonumber\\
    &\quad \ge \int_{\Omega\cap \{1-r>\tfrac38\delta\}} 4(T^{a,\eps})^3|\nabla g|^2 dx - \frac12 \int_{\Omega\cap \{1-r>\tfrac38\delta\}} 4(T^{a,\eps})^3|\nabla g|^2 dx - \frac12 \int_{\Omega\cap \{1-r>\tfrac38\delta\}} 36 T^{a,\eps} |\nabla T^{a,\eps}|^2 g^2 dx \nonumber\\
    &\quad \ge \int_{\Omega\cap \{1-r>\tfrac38\delta\}} 2(T^{a,\eps})^3|\nabla g|^2 dx - C \|g\|_{L^2(\Omega)}^2. \label{eq:366}
\end{align}
In the domain $1-r\le \frac{3}{8}\delta$, boundary layer effects play a role. First we split the integral as
   \begin{align*}
       &\int_{\Omega\cap \{1-r\le\tfrac38\delta\}} 4(T^{a,\eps})^3|\nabla g|^2 dx - \int_{\Omega\cap \{1-r\le\tfrac38\}}  \nabla (4(T^{a,\eps})^3)\cdot g \nabla g dx \nonumber\\
       &\quad = \int_{\Omega\cap \{1-r\le\tfrac38\delta\}} 4(T^{a,\eps})^3 |\partial_\eta T^{a,\eps}|^2 dx + \int_{\Omega\cap }
       =: I_1 + I_2.
   \end{align*}
   Since $\|\nabla_{x'}(T^a)\|_{L^2(\Omega)}$ is bounded, we can estimate $I_1$ the same as \eqref{eq:366}:
   \begin{align}\label{eq:I1/e}
       I_1 \ge \int_{\Omega\cap \{x_1\le \tfrac38\}} 2(T^a)^3|\nabla_{x'} g|^2 dx - C \|g\|_{L^2(\Omega)}^2.
   \end{align} 
   To estimate $I_2$, we use the spectral assumption \ref{asA}. The composite approximate solution $(T^a,\psi^a)$ is close to the solution $(\tilde{T}_0,\tilde{\psi}_0)$ of the nonlinear Milne problem \eqref{eq:g0}.
   Using the equation
   \begin{align*}
       (T^a)^3= \left(\sum_{k=0}^N \eps^k (T_k+\bar{T}_k) \right)^3 = (T_0+\bar{T}_0)^3 + \varepsilon G,
   \end{align*}
   where $G=3 (T_0+\bar{T}_0)^2 \sum_{k=1}^N \eps^{k-1} (T_k+\bar{T}_k) + 6(T_0+\bar{T}_0)^2 (\sum_{k=1}^N \eps^{k-1} (T_k+\bar{T}_k))^2 + 3(T_0+\bar{T}_0) (\sum_{k=1}^N \eps^{k-1} (T_k+\bar{T}_k))^2$,
   we can rewrite $I_2$ as 
   \begin{align*}
       I_2 &=\frac{1}{\varepsilon^2} \int_{\mathbb{T}^2} \int_0^{\frac{3}{8\eps}}  4(T_0+\bar{T}_0)^3 + 4 \varepsilon G) |\partial_\eta g|^2 d\eta dx' -\frac{1}{\varepsilon^2}\int_{\mathbb{T}^2} \int_0^{\frac{3}{8\eps}} \partial_{\eta} (4(T_0+\bar{T}_0)^3 + 4 \varepsilon G) g \partial_\eta g d\eta dx' \nonumber\\
       &= \frac{1}{\varepsilon^2}\int_{\mathbb{T}^2} dx' \int_{0}^{\frac{3}{8\varepsilon}} (4\tilde{T}_0^3 |\partial_\eta g|^2 - \partial_\eta (4 \tilde{T}_0^3)g\partial_\eta g )d\eta + \frac{1}{\varepsilon^2} \int_{\mathbb{T}^2} dx' \int_{0}^{\frac{3}{8\varepsilon}} (4(T_0+\bar{T}_0)^3-4\tilde{T}_0^3)|\partial_\eta g|^2 d\eta \nonumber\\
       &\quad - \frac{1}{\varepsilon^2} \int_{\mathbb{T}^2}dx' \int_0^{\frac{3}{8\varepsilon}} \partial_\eta (4(T_0+\bar{T}_0)^3 - 4\tilde{T}_0^3) g\partial_\eta g d\eta dx' + \frac{1}{\varepsilon^2} \int_{\mathbb{T}^2}dx' \int_0^{\frac{3}{8\varepsilon}}  \varepsilon 4 (G |\partial_\eta g|^2 - \partial_\eta G g \partial_\eta g) d\eta \nonumber\\
       &=:I_{21}+I_{22}+I_{23}+I_{24}.
   \end{align*}
   The spectral assumption \ref{asA} implies  
   \begin{align*}
       I_{21} &= \frac{1}{\varepsilon^2}\int_{\mathbb{T}^2} dx' \int_{0}^{\frac{3}{8\varepsilon}} (4\tilde{T}_0^3 |\partial_\eta g|^2 - \partial_\eta (4 \tilde{T}_0^3)g\partial_\eta g )d\eta \nonumber\\
       &\ge \frac{1}{\varepsilon^2}\int_{\mathbb{T}^2} dx' \int_{0}^{\frac{3}{8\varepsilon}}( 4\tilde{T}_0^3 |\partial_\eta g|^2 - \frac12 (4\tilde{T}_0^3 |\partial_\eta g|^2+36\tilde{T}_0|\partial_\eta \tilde{T}_0|^2 g^2) ) d\eta \nonumber\\
       &\ge  \frac{1}{2\varepsilon^2}\int_{\mathbb{T}^2} dx' \int_{0}^{\frac{3}{8\varepsilon}} (4 \tilde{T}_0^3 |\partial_\eta g|^2 - 36\tilde{T}_0|\partial_\eta \tilde{T}_0|^2 g^2 ) d\eta \nonumber\\
       &\ge \frac{1-M}{2\varepsilon^2} \int_{\mathbb{T}^2} dx' \int_{0}^{\frac{3}{8\varepsilon}} 4\tilde{T}_0^3|\partial_\eta g|^2 d\eta.
   \end{align*}
   For $I_{22}$, since $\bar{T}_0=T_0 + \chi(\varepsilon\eta) (\tilde{T}_0-T_0(0))$, it holds that 
   \begin{align*}
       (T_0+\bar{T}_0)^3 - \tilde{T}_0^3 &= (T_0 +\chi(\varepsilon\eta) (\tilde{T}_0-T_0(0)) -\tilde{T}_0)(\tilde{T}_0^2+\tilde{T}_0(T_0+\bar{T}_0)+(T_0+\bar{T}_0)^2) \nonumber\\
       &=((T_0-T_0(0)) - (1-\chi(\varepsilon\eta))(\tilde{T}_0 - \bar{T}_0))(\tilde{T}_0^2+\tilde{T}_0(T_0+\bar{T}_0)+(T_0+\bar{T}_0)^2) \nonumber\\
       &= (\partial_{x_1}T_0(\xi) \varepsilon \eta  - (1-\chi(\varepsilon\eta))(\tilde{T}_0 - \bar{T}_0))(\tilde{T}_0^2+\tilde{T}_0(T_0+\bar{T}_0)+(T_0+\bar{T}_0)^2).
   \end{align*}
   Since we are considering the integration over $x_1=\varepsilon\eta \in [0,\tfrac38\delta]$ and  $(1-\chi(\varepsilon\eta))$ is supported on $[\tfrac14\delta,\tfrac38\delta]$,
   \begin{align*}
       I_{22} = \frac{1}{\varepsilon^2} \int_{\mathbb{T}^2} dx' \int_{0}^{\frac{3}{8\varepsilon}} (4(T_0+\bar{T}_0)^3-4\tilde{T}_0^3)|\partial_\eta g|^2 d\eta \le \frac{3\delta C}{8\varepsilon^2} \int_{\mathbb{T}^2} dx' \int_0^{\frac{\delta}{2\eps}} |\partial_\eta g|^2 d\eta.
   \end{align*}
   For $I_{23}$, due to 
   \begin{align*}
       \partial_\eta  ((T_0+\bar{T}_0)^3 - \tilde{T}_0^3) &= \partial_\eta(T_0 +\chi(\varepsilon\eta) (\tilde{T}_0-T_0(0)) -\tilde{T}_0)(\tilde{T}_0^2+\tilde{T}_0(T_0+\bar{T}_0)+(T_0+\bar{T}_0)^2) \nonumber\\
       &\quad  + (T_0 +\chi(\varepsilon\eta) (\tilde{T}_0-T_0(0)) -\tilde{T}_0)\partial_\eta (\tilde{T}_0^2+\tilde{T}_0(T_0+\bar{T}_0)+(T_0+\bar{T}_0)^2) \nonumber\\
       &= \varepsilon \chi'(\eps\eta)(\tilde{T}_0-{T}_0(0))(\tilde{T}_0-T_0(0)) -\tilde{T}_0)(\tilde{T}_0^2+\tilde{T}_0(T_0+\bar{T}_0)+(T_0+\bar{T}_0)^2) \nonumber\\
       &\quad + (\chi(\varepsilon\eta)-1) \partial_\eta \tilde{T}_0 (\tilde{T}_0^2+\tilde{T}_0(T_0+\bar{T}_0)+(T_0+\bar{T}_0)^2) \nonumber\\
       &\quad +  (\partial_{x_1}T_0(\xi) \varepsilon \eta  - (1-\chi(\varepsilon\eta))(\tilde{T}_0 - \bar{T}_0))\partial_\eta(\tilde{T}_0^2+\tilde{T}_0(T_0+\bar{T}_0)+(T_0+\bar{T}_0)^2),
   \end{align*}
   with consideration of $\eps\eta\in [0,\tfrac38\delta]$ and and  $(1-\chi(\varepsilon\eta))$ being supported on $[\tfrac14\delta,\tfrac38\delta]$,
   it holds that 
   \begin{align*}
       I_{23} &= - \frac{1}{\varepsilon^2} \int_{\mathbb{T}^2}dx' \int_0^{\frac{3\delta}{8\varepsilon}} \partial_\eta (4(T_0+\bar{T}_0)^3 - 4\tilde{T}_0^3) g\partial_\eta g d\eta  \le  \frac{C}{\varepsilon^2} \int_{\mathbb{T}^2}dx' \int_0^{\frac{3\delta}{8\varepsilon}} (\varepsilon |g| |\partial_\eta g| + \frac{3\delta}{8} |g| |\partial_\eta g| )d\eta \nonumber\\
       &\le \frac{3\delta C}{8\eps^2} \int_{\mathbb{T}^2}dx' \int_0^{\frac{3\delta}{8\varepsilon}} (g^2 + |\partial_\eta g|^2) d\eta.
   \end{align*}
   For $I_{24}$, we have 
   \begin{align*}
       I_{24} =  \frac{1}{\varepsilon^2} \int_{\mathbb{T}^2}dx' \int_0^{\frac{3\delta}{8\varepsilon}}  \varepsilon 4 (G |\partial_\eta g|^2 - \partial_\eta G g \partial_\eta g) d\eta \le \frac{C }{\varepsilon} \int_{\mathbb{T}^2}dx' \int_0^{\frac{3\delta}{8\varepsilon}}( |g|^2 + |\partial_\eta g|^2) d\eta.
   \end{align*}
   Combing the above estimates gives 
   \begin{align*}
       I_2&\ge \frac{1-M}{2\varepsilon^2} \int_{\mathbb{T}^2} dx' \int_{0}^{\frac{3\delta}{8\varepsilon}} 4\tilde{T}_0^3|\partial_\eta g|^2 d\eta - \frac{3\delta C}{8\varepsilon^2}\int_{\mathbb{T}^2}dx' \int_0^{\frac{3\delta}{8\varepsilon}}( |g|^2 + |\partial_\eta g|^2) d\eta \nonumber\\
       &\quad -  \frac{C}{\varepsilon} \int_{\mathbb{T}^2}dx' \int_0^{\frac{3\delta}{8\varepsilon}}( |g|^2 + |\partial_\eta g|^2) d\eta.
   \end{align*}
   By the assumption of the lemma, $\tilde{T}_0\ge a$, hence $4\tilde{T}_0^3 \ge 4a^3$ for some constant $a>0$.
   We can take sufficiently small $\eps$ and $\delta$ such that $\varepsilon<(1-M)a^3/C$ and $3\delta C/8 \le (1-M)/8$, and we get from the above inequality
   \begin{align*}
       I_2 \ge \frac{1-M}{4\varepsilon^2} \int_{\mathbb{T}^2} dx' \int_{0}^{\frac{3\delta}{8\varepsilon}} 4\tilde{T}_0^3|\partial_\eta g|^2 d\eta. 
   \end{align*}
   Combing this with \eqref{eq:366} and \eqref{eq:I1/e} implies
   \begin{align*}
       - \int_\Omega 4(T^a)^3 g \Delta g dx &= \int_\Omega 4(T^a)^3|\nabla g|^2 dx - \int_\Omega \nabla (4(T^a)^3)\cdot g \nabla g dx \nonumber\\
       &\ge\int_{\Omega\cap \{x_1>\tfrac38\delta\}} 2(T^a)^3|\nabla g|^2 dx  + \int_{\Omega\cap \{x_1\le \tfrac38\delta\}} 2(T^a)^3|\nabla_{x'} g|^2 dx  \nonumber\\
       &\quad +\frac{1-M}{4\varepsilon^2} \int_{\mathbb{T}^2} dx' \int_{0}^{\frac{3\delta}{8\varepsilon}} 4\tilde{T}_0^3|\partial_\eta g|^2 d\eta - C\|g\|_{L^2(\Omega)}^2\nonumber\\
       &\ge \kappa \|\nabla g\|_{L^2(\Omega)}^2-C\|g\|_{L^2(\Omega)}^2,
   \end{align*}
   where $\kappa=\min\{2a^3,(1-M)a^3\}$,
   which finishes the proof.
\end{proof}

With the above lemma, Theorem \ref{thm:mainresult} can be proved in the same way as \cite[Theorem 1]{Bounadrylayer2019GHM2}.
\appendix
\section{Transport equation with geometric correction in half-space}\label{sec:apA}
Consider the equation in $(\eta,\phi)\in \mathbb{R}_+\times [-\pi,\pi)$:
\begin{align}
	&\sin\phi \partial_\eta f + F(\eta)\cos\phi \partial_\phi f = H(\eta,\phi),\\
	& f(0,\phi) = h(\phi),\quad \text{for }\sin\phi >0.
\end{align}
The following lemma was proved in \cite[Pages 1512-1514]{wu2015geometric}.
\begin{lemma}\label{lem:formula_inf}
There exists a solution to the above problem given by 
\begin{align}
	f(\eta,\phi) = \mathcal{A} h(\phi) + \mathcal{T} H(\eta,\phi),
\end{align}
with $\mathcal{A}$, $\mathcal{H}$ are defined in the following: for $\sin\phi>0$,
\begin{align}\label{eq:varphi--1}
	\mathcal{A} h(\phi) =	h(\phi'(\phi,\eta,0)) e^{-\int_0^\eta \frac{1}{\sin \phi'(\phi,\eta,\xi)} d\xi},\quad \mathcal{T}H(\eta,\phi) = \int_0^\eta \frac{h(\xi,\phi'(\phi,\eta,\xi))}{\sin \phi'(\phi,\eta,\xi)} e^{-\int_\xi^\eta \frac{1}{\sin \phi'(\phi,\eta,\rho)}d\rho} d\xi,
\end{align}
for $\sin\phi<0$ and $|E(\phi,\eta)| \le e^{-V(\eps;\infty)}$,
\begin{align}\label{eq:varphi--2}
	\mathcal{A}h(\phi) &= 0,\quad \mathcal{T} H(\eta,\phi)= \int_\eta^\infty \frac{h(\xi,-\phi'(-\phi,\eta,\xi))}{\sin \phi'(-\phi,\eta,\xi)} e^{\int_\xi^\eta \frac{1}{\sin \phi'(-\phi,\eta,\rho)}d\rho} d\xi,
\end{align}
and for $\sin\phi<0$ and $|E(\phi,\eta)| \ge e^{-V(\eps;\infty)}$,
\begin{align}\label{eq:varphi--3}
	&\mathcal{A} h(\phi) = h(\phi'(-\phi,\eta,0)) e^{-\int_0^{\eta_+} + \int_\eta^{\eta_+} \frac{1}{\sin\phi'(-\phi,\eta,\xi)}d\xi},\nonumber \\
	& \mathcal{T} H(\eta,\phi) = \int_0^{\eta_+} \frac{h(\xi,\phi'(-\phi,\eta,\xi))}{\sin\phi'(-\phi,\eta,\xi)} e^{-\int_\xi^{\eta_+} + \int_{\eta}^{\eta_+} \frac{1}{\sin\phi'(-\phi,\eta,\rho)}d\rho} d\xi \nonumber\\
	&\qquad\qquad\quad + \int_\eta^{\eta_+} \frac{h(\xi,-\phi'(-\phi,\eta,\xi))}{\sin \phi'(-\phi,\eta,\xi)} e^{\int_\xi^{\eta} \frac{1}{\sin \phi'(-\phi,\eta,\rho)}d\rho} d\xi.
\end{align}
Moreover, $\mathcal{A}$ and $\mathcal{H}$ satisfies 
\begin{align}
	&\|e^{\beta \eta}\mathcal{A} h\|_{L^\infty(\mathbb{R}_+\times [-\pi,\pi))} \le \|h\|_{L^\infty((0,\pi))},\quad \text{for any } 0\le \beta\le 1,\\
	&\|e^{\beta\eta}\mathcal{T} H\|_{L^\infty(\mathbb{R}_+\times [-\pi,\pi))} \le C \|e^{\beta\eta} H\|_{L^\infty(\mathbb{R}_+\times [-\pi,\pi))},\quad \text{for any } 0\le \beta \le \tfrac12.
\end{align}
\end{lemma}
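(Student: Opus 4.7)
The plan is to construct the solution by the method of characteristics and then verify the exponentially weighted $L^\infty$ bounds case by case. First I would identify the conserved energy along characteristics: since $F(\eps;\eta) = -\partial_\eta V(\eps;\eta)$, a direct computation shows that $E(\eta,\phi) := \cos\phi \, e^{-V(\eps;\eta)}$ is conserved along the flow generated by $\dot\eta = \sin\phi$, $\dot\phi = F(\eps;\eta)\cos\phi$. This gives the implicit parametrization $\cos\phi'(\phi,\eta,\xi) = \cos\phi \, e^{V(\eps;\xi)-V(\eps;\eta)}$, which the paper already introduces above in the body of the proof of Lemma~\ref{lem.B2}. Along a characteristic the PDE reduces to the scalar ODE $\sin\phi'(\xi)\,\frac{d}{d\xi} f + f = H$, which I would solve via the integrating factor $e^{\int \frac{1}{\sin\phi'} d\rho}$.

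Next I would split into the three geometric cases that already appear (under different names) in the proof of Lemma~\ref{lem.B2}: (i) $\sin\phi>0$, where the characteristic through $(\eta,\phi)$ extends backwards to $\eta=0$ and picks up the inflow data $h$; (ii) $\sin\phi<0$ with $|E(\phi,\eta)|\le e^{-V(\eps;\infty)}$, where the backwards characteristic does not hit $\{\eta=0\}$ before escaping to $\eta=+\infty$, so the $\mathcal{A}h$ contribution vanishes and only a source integral survives; and (iii) $\sin\phi<0$ with $|E(\phi,\eta)|>e^{-V(\eps;\infty)}$, where the backwards characteristic turns at a point $\eta_+$ determined by $\cos\phi'(\phi,\eta,\eta_+)=e^{-V(\eps;\eta_+)}|E|$, returns to $\eta=0$ with the reflected angle, and then picks up $h$. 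In each case I would integrate the ODE once backwards from $(\eta,\phi)$ to the appropriate endpoint, and once forwards from $\eta=0$ (in cases (i) and (iii)) by gluing at the inflow boundary. This yields the formulas \eqref{eq:varphi--1}, \eqref{eq:varphi--2}, \eqref{eq:varphi--3}; that they indeed satisfy the PDE is a direct differentiation using $\frac{d}{d\eta}\phi'(\phi,\eta,\xi)$ and the energy conservation.

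For the $L^\infty$ estimates, the $\mathcal{A}$ bound is immediate: $\|\mathcal{A}h\|_\infty \le \|h\|_\infty$, and to insert the exponential weight one only needs $e^{\beta\eta} e^{-\int_0^\eta \frac{d\xi}{\sin\phi'}} \le 1$ for $0\le \beta \le 1$, which follows from $\sin\phi'\le 1$. For $\mathcal{T}$, I would use the integration-by-parts identity $\int_\xi^\eta \frac{1}{\sin\phi'(\phi,\eta,\rho)}d\rho = -\log\bigl(e^{-\int_\xi^\eta (\sin\phi')^{-1} d\rho}\bigr)$, rewrite $\int \frac{d\xi}{\sin\phi'} e^{-\int_\xi^\eta \frac{d\rho}{\sin\phi'}} = \int d\bigl(e^{-\int_\xi^\eta \frac{d\rho}{\sin\phi'}}\bigr)$, and then peel off the weight $e^{\beta\eta}$ by the computation already performed in the proof of Theorem~\ref{thm.nm} (see the three case-by-case estimates culminating in \eqref{eq:non/conv}); taking $0\le\beta\le 1/2$ keeps the geometric series $\sum (\beta/(1-\beta))^k$ bounded and absorbs the boundary contributions at the turning point $\eta_+$ in case (iii).

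The main obstacle is case (iii): at a turning point $\eta_+$ one has $\sin\phi'(\phi,\eta,\eta_+)=0$, so both the integral $\int_\xi^{\eta_+}\frac{d\rho}{\sin\phi'}$ and the integrand $1/\sin\phi'$ diverge. However the divergence is logarithmic in the change of variable $\xi\mapsto e^{-\int_\xi^{\eta_+}(\sin\phi')^{-1}d\rho}$, which pushes all the weights of interest to integrable ones; the delicate point is to verify that the reflection identity at $\eta_+$ (gluing the $\sin\phi<0$ branch to the $\sin\phi>0$ branch along the same energy curve) yields a continuous solution and a bound uniform in $\eps$ via the uniform bounds on $V(\eps;\cdot)$ in \eqref{eq:Fin1}. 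Once this bookkeeping is done the $L^\infty$ inequalities follow, and the lemma is established; since this is exactly the construction carried out in \cite[pp.~1512--1514]{wu2015geometric}, the proof can be concluded by referring to that source for the detailed manipulations.
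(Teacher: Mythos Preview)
Your proposal is correct and aligns with the paper's treatment: the paper does not give an independent proof of Lemma~\ref{lem:formula_inf} but simply cites \cite[Pages 1512--1514]{wu2015geometric}, and the finite-interval analogue of the characteristic construction you describe is precisely what is carried out in the proof of Lemma~\ref{lem.B2} (formulas \eqref{eq:varphi/1}, \eqref{eq:varphi/2}, \eqref{eq:varphi/3}). Your sketch of the three geometric cases, the integrating-factor ODE, and the weighted $L^\infty$ bounds via the integration-by-parts identity mirrors both that argument and the case-by-case estimates in the proof of Theorem~\ref{thm.nm}, so there is nothing to add.
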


\section{Existence for the linearized Milne problem without geometric correction} \label{sec:apB}
Consider the following linear Milne problem on the half-line:
\begin{align}
	&\partial_\eta G + \langle \varphi - 4h^3 G \rangle = S_1,\label{eq:apB1}\\
	&\sin\phi \partial_\eta\varphi + \varphi - 4h^3 G = S_2,\label{eq:apB2}\\
\end{align}
with boundary conditions 
\begin{align}\label{eq:apB3}
	G(0)=G_b,\quad \varphi(0,\phi) = \varphi_b(\phi),\quad \text{for }\sin\phi>0.
\end{align}
The following theorem was proved in \cite[Theorem 2]{Bounadrylayer2019GHM2}.
\begin{theorem}\label{thm:apB1}
	Assume $h$ satisfies the spectral assumption \ref{asA}. Assume $S_1=S_1(\eta)$, $S_2S_2(\eta,\phi)$ satisfy $|S_1(\eta)|, |S_2(\eta,\phi)| \le C_S e^{-\beta \eta}$ for any $\eta\in\mathbb{R}_+$, $\phi \in [-\pi,\pi)$. Then there exists a unique bounded solution $(G,\varphi)\in L^2_{\rm loc}(\mathbb{R}_+)\times L^2(\mathbb{R}_+\times [-\pi,\pi))$ to system \eqref{eq:apB1}-\eqref{eq:apB2} with boundary conditions \eqref{eq:apB3}. Moreover there exists a constant $G_\infty\in\mathbb{R}$ such that 
	\begin{align}
		|G(\eta)-G_\infty| \le C e^{-\beta \eta},\quad |\varphi(\eta,\phi) - 4h^3_\infty G_\infty| \le C e^{-\beta \eta},\quad \text{for any }\eta\in \mathbb{R}_+,~\phi\in [-\pi,\pi),
	\end{align}
	where $C>0$ is a positive constant depending on $\beta$ and $\int_{\sin\phi>0}\sin\phi \varphi_b^2 d\phi$.
\end{theorem}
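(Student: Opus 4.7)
The plan is to prove Theorem \ref{thm:apB1} by specializing the scheme of Theorem \ref{thm.linearmilne} to the degenerate case $F(\eps;\eta)\equiv 0$. Because the geometric correction terms were the only source of the smallness condition on $\eps$ (entering through the $L^\infty$ bound \eqref{eq:Fin4} and the competition in \eqref{eq:F/beta'}), switching them off makes every estimate uniformly available, so the proof reuses the same structure with simpler arithmetic. First I would absorb the inhomogeneous boundary condition by the substitution $\tilde G := G - G_b$, producing a modified problem with $\tilde G(0)=0$ (as required by the spectral assumption \ref{asA}) and with sources $\tilde S_1 = S_1 + \langle 4h^3 G_b\rangle$, $\tilde S_2 = S_2 + 4h^3 G_b$; the exponential convergence $h\to h_\infty$ guaranteed by Theorem \ref{thm.nm}, followed by an additional constant shift to cancel the constant part of the new sources, restores the exponential decay hypothesis. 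Next I would approximate by solutions on the bounded interval $[0,B]$ with the reflection boundary conditions $\partial_\eta \tilde G(B)=0$ and $\tilde\varphi(B,\phi)=\tilde\varphi(B,-\phi)$ for $\sin\phi>0$, constructed through the Leray--Schauder fixed-point argument behind Lemma \ref{lem.lB} but with the $F$-dependent couplings in \eqref{eq:lm/b1}--\eqref{eq:lm/b2} dropped.

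The key energy identity is obtained by testing the elliptic equation against $4h^3\tilde G$ and the transport equation against $\tilde \varphi$ (and by their weighted counterparts $e^{2\beta\eta}\cdot 4h^3\tilde G$ and $e^{2\beta\eta}\tilde \varphi$), integrating, and invoking inequality \eqref{eq:sp/ineq} from the spectral assumption. With $F\equiv 0$, the cross-term $\tfrac{2\eps}{4-3\delta}\|\partial_\phi \varphi^{k-1}\|^2_{L^2}$ of \eqref{eq:lm/b-4} disappears entirely, so one obtains directly
\begin{align*}
    &\tfrac18\int_0^B\!\!\int_{-\pi}^{\pi} e^{2\beta\eta}(\tilde\varphi-4h^3\tilde G)^2\,d\phi\,d\eta + \tfrac12\!\!\int_{\sin\phi<0}\!\!|\sin\phi|\,\tilde\varphi^2(0,\cdot)\,d\phi \\
    &\qquad \leq \tfrac12\!\!\int_{\sin\phi>0}\!\!\sin\phi\,\tilde\varphi_b^2\,d\phi + \|e^{\beta\eta}\tilde S_2\|_{L^2}^2,
\end{align*}
uniformly in $B$. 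Combined with the conservation law $\partial_\eta \tilde G = \langle\sin\phi\,\tilde\varphi\rangle$ (derived by comparing the elliptic and the angle-averaged transport equations and integrating from $B$, where both sides vanish by the symmetric BC), this bounds $\|e^{\beta\eta}\partial_\eta \tilde G\|_{L^2([0,B])}$ uniformly. Weak-$*$ compactness produces the limit $(\tilde G,\tilde \varphi)$ on the half-line; the trace theorem and Ukai's trace theorem identify the boundary values. Exponential convergence $\tilde G(\eta)\to \tilde G_\infty$ follows from $|\tilde G(\eta)-\tilde G_\infty|\leq \int_\eta^\infty |\partial_\eta\tilde G|\,d\eta\leq Ce^{-\beta\eta}$, and $\tilde\varphi$ inherits decay to $4h_\infty^3 \tilde G_\infty$ via the characteristic formulas \eqref{eq:varphi--1}--\eqref{eq:varphi--3}, which in the present $F\equiv 0$ setting reduce to the classical straight-line Duhamel expressions with $\phi'(\phi,\eta,\xi)\equiv \phi$.

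Uniqueness closes the argument: applied to the difference of two solutions, the energy identity (with vanishing data and source) forces $\varphi-4h^3 G\equiv 0$ and $\varphi(0,\phi)\equiv 0$ for outflowing $\phi$ via the spectral assumption; then $\partial_\eta G=\langle\sin\phi\,\varphi\rangle=0$ together with $G(0)=0$ yields $G\equiv 0$ and thus $\varphi\equiv 0$. The only real obstacle, conceptually mild but technically delicate, is propagating the pointwise identity $\partial_\eta G=\langle\sin\phi\,\varphi\rangle$ through the iteration and through the $B\to\infty$ limit, since this identity is the rigid link between the elliptic and transport components and is essential both for closing the energy estimate and for producing the constant limit $G_\infty$.
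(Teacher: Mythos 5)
The paper does not actually prove Theorem \ref{thm:apB1}: it is quoted verbatim from \cite[Theorem 2]{Bounadrylayer2019GHM2}, so the only internal material to compare against is the proof of Theorem \ref{thm.linearmilne} in Section \ref{sec4}, of which this statement is the $F\equiv 0$ special case. Your proposal is exactly that specialization run in reverse, and it is structurally sound: homogenize $G(0)$, solve on $[0,B]$ with the reflection conditions \eqref{eq:apB4}, test against $4h^3 \tilde G$ and $e^{2\beta\eta}\tilde\varphi$, invoke \eqref{eq:sp/ineq}, use the conservation law $\partial_\eta \tilde G=\langle \sin\phi\,\tilde\varphi\rangle$ to control $\partial_\eta\tilde G$, pass to the limit $B\to\infty$, and read off the decay from the straight-line Duhamel formulas. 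This mirrors Lemma \ref{lem.lB} and the subsequent weighted estimate with every $F$-term deleted, and indeed the deletions only remove obstacles (no smallness of $\eps$, no $\|\partial_\phi\varphi^{k-1}\|_{L^2}$ cross-term, no cap on $\beta'$ coming from \eqref{eq:F/beta'}).

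Two points deserve more care than your sketch gives them. First, the exact identity $\partial_\eta \tilde G=\langle\sin\phi\,\tilde\varphi\rangle$ requires the compatibility condition $S_1=\langle S_2\rangle$, which Lemma \ref{lem.lB} assumes explicitly but Theorem \ref{thm:apB1} as stated does not; without it you only get $\partial_\eta \tilde G-\langle\sin\phi\,\tilde\varphi\rangle=-\int_\eta^B(S_1-\langle S_2\rangle)\,ds$, which is still exponentially small and suffices for the decay estimates, but the energy identity then picks up an extra (controllable) source term that you should track. Second, the substitution $\tilde G=G-G_b$ produces the source $4h^3G_b$, whose constant part $4h_\infty^3G_b$ does not decay; your proposed constant shift of $\varphi$ does cancel it simultaneously in both equations (since $\langle c\rangle=2\pi c$), but it changes the inflow data to $\varphi_b-4h_\infty^3G_b$ and makes the final constant $C$ depend on $G_b$ and $h_\infty$ as well as on $\int_{\sin\phi>0}\sin\phi\,\varphi_b^2\,d\phi$ --- consistent with the intended use ($G_b=0$ in \eqref{eq:lm/b1}--\eqref{eq:lm/b2}) but worth stating. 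Neither issue is a genuine gap; with those two adjustments the argument closes.
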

The theorem was proved in \cite{Bounadrylayer2019GHM2} by first showing the existence on bounded interval and then using uniform estimate to extend the solution to the half-line. Consider system \eqref{eq:apB1}-\eqref{eq:apB2} on the interval $\eta\in[0,B]$ with the following boundary conditions at $\eta=B$,
\begin{align}\label{eq:apB4}
	\partial_\eta G(B)=0,\quad \varphi(B,\phi) = \varphi(B,-\phi),\quad \text{for }\sin\phi>0.
\end{align}
The following theorem holds.
\begin{theorem}\label{thm:apB2}
	Let the assumptions of Theorem \ref{thm:apB2} hold. Then there exists a unique solution $(G,\varphi) \in C^2([0,B])\times C^1([0,B]\times [-\pi,\pi))$ to system \eqref{eq:apB1}-\eqref{eq:apB2} with boundary condtions \eqref{eq:apB3}-\eqref{eq:apB4}.
\end{theorem}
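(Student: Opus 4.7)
The plan is to adapt the proof of Lemma~\ref{lem.lB} to the case without geometric correction, formally corresponding to $F(\eps;\eta)\equiv 0$. Since this removes all the terms involving $F$, whose $L^\infty$ and $L^2$ smallness was the main quantitative restriction in Lemma~\ref{lem.lB}, the argument simplifies considerably and no smallness assumption on any parameter is required. First I would homogenize the boundary condition by setting $\tilde G:=G-G_b$, which absorbs the constant boundary value into a modified source $\tilde S_1:=S_1+8\pi h^3(\eta)G_b$ and leaves us with $\tilde G(0)=0$, the normalization required to invoke the spectral assumption~\ref{asA}. Since $h$ is bounded on $[0,B]$, the new source $\tilde S_1$ still obeys an exponential decay bound.

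Next, I would define a compact operator $\mathcal{L}$ on $L^2([0,B])\times L^2([0,B]\times[-\pi,\pi))$ by decoupling the system: given $(\bar G,\bar\varphi)$, solve the linear transport equation $\sin\phi\,\partial_\eta\varphi+\varphi=4h^3\bar G+S_2$ for $\varphi$ by the method of characteristics (as in Lemma~\ref{lem.B2} with $F\equiv 0$, where the characteristics reduce to straight lines $\phi=\mathrm{const}$ and the explicit integral formulas simplify accordingly), and then solve the linear second-order ODE $\partial_\eta^2 \tilde G-8\pi h^3\tilde G=\tilde S_1-\int_{-\pi}^\pi\bar\varphi\,d\phi$ with boundary conditions $\tilde G(0)=0$, $\partial_\eta\tilde G(B)=0$. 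The a priori estimate is obtained by multiplying the elliptic equation by $-4h^3\tilde G$ and the transport equation by $\varphi$, integrating, and summing. The spectral assumption yields coercivity of the form $\int_0^B 4h^3|\partial_\eta\tilde G|^2d\eta+\int_0^B\partial_\eta(4h^3)\tilde G\,\partial_\eta\tilde G\,d\eta\ge 0$; the boundary term at $\eta=B$ cancels thanks to the Neumann condition on $\tilde G$ and the reflection $\varphi(B,\phi)=\varphi(B,-\phi)$ combined with the oddness of $\sin\phi$; and the source is absorbed via Young's inequality. This gives the uniform bound on $\|\varphi-4h^3\tilde G\|_{L^2}$ and the trace $\int_{\sin\phi<0}|\sin\phi|\varphi^2(0,\phi)d\phi$, hence $\tilde G$ and $\varphi$ are bounded in $L^2$ independently of the homotopy parameter, and the Leray-Schauder theorem produces a fixed point of $\mathcal{L}$, that is, a solution of the coupled system.

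Classical regularity follows by bootstrapping from the integral representations: the characteristic formula for $\varphi$ gives $\varphi\in C^1([0,B]\times[-\pi,\pi))$ once $\bar G$ and $S_2$ are continuous, and then the elliptic ODE for $\tilde G$ with continuous right-hand side gives $\tilde G\in C^2([0,B])$ by standard ODE theory. Uniqueness is immediate: applying the same energy identity to the difference $(g,\psi)$ of two solutions (which solves the homogeneous system with $g(0)=0$, $\psi(0,\phi)=0$ for $\sin\phi>0$) forces $\psi-4h^3 g\equiv 0$ and the outflow trace to vanish, and then $\partial_\eta g=\langle\sin\phi\,\psi\rangle=\langle 4h^3 g\sin\phi\rangle=0$ together with $g(0)=0$ implies $g\equiv 0$ and hence $\psi\equiv 0$. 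The main delicate point I anticipate is the cancellation of the boundary terms at $\eta=B$: one needs to verify that the Neumann condition on $\tilde G$ together with the reflection condition on $\varphi$ produce vanishing boundary contributions for both the $\partial_\eta^2\tilde G$ integration-by-parts term and the $\sin\phi\,\partial_\eta\varphi\cdot\varphi$ transport term, which is where the symmetry $\varphi(B,\phi)=\varphi(B,-\phi)$ combined with the oddness of $\sin\phi$ is essential.
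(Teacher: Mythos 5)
The paper itself does not prove Theorem~\ref{thm:apB2}: it is quoted from the flat-boundary paper \cite{Bounadrylayer2019GHM2}, so the only in-paper argument to measure you against is the proof of Lemma~\ref{lem.lB}, which treats the geometrically corrected analogue with the same toolkit. Your outline --- homogenize so that the spectral assumption~\ref{asA} applies to $\tilde G=G-G_b$, set up a decoupled solution operator, test with $-4h^3\tilde G$ and $\varphi$, use the Neumann condition and the reflection condition together with the oddness of $\sin\phi$ to kill the boundary terms at $\eta=B$, and run uniqueness through $\partial_\eta g=\langle\sin\phi\,\psi\rangle=0$ --- is exactly that toolkit and is correct in outline. (Two small points: the transport equation also picks up the source $4h^3G_b$ after homogenization, and \eqref{eq:apB1} must be read with $\partial_\eta^2G$, as you implicitly did.)

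The one step that needs genuine work is the a priori bound for the Leray--Schauder family $(G,\varphi)=\sigma\mathcal L(G,\varphi)$. In Lemma~\ref{lem.lB} the homotopy parameter sits only on the geometric-correction terms, so the full coupling survives at every $\sigma$ and the energy identity yields the perfect square $\iint(\varphi-4h^3G)^2$. Here there is no $F$-term to iterate on, so $\sigma$ necessarily multiplies the elliptic--transport coupling itself; the quadratic part of the identity then becomes $\iint(\varphi-\sigma\,4h^3G)^2+(1-\sigma^2)\iint(4h^3G)^2$ (the second term only being available because you correctly keep $8\pi h^3\tilde G$ on the left of the decoupled ODE), and the source contributions that are \emph{linear} in $G$, such as $\sigma\int 4h^3\tilde G\,\tilde S_1\,d\eta$, cannot be absorbed into the degenerating coefficient $(1-\sigma^2)$ uniformly as $\sigma\to1$. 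They can be handled, but only by using the spectral assumption quantitatively --- it gives $\int 4h^3|\partial_\eta\tilde G|^2+\int\partial_\eta(4h^3)\tilde G\,\partial_\eta\tilde G\ge(1-\sqrt M)\int 4h^3|\partial_\eta\tilde G|^2$, which combined with $\tilde G(0)=0$ and Poincar\'e controls $\|\tilde G\|_{L^2}$ --- or by the paper's device of bounding $\|\partial_\eta^2G\|_{L^2}$ from the elliptic equation and integrating twice using $G(0)=0$, $\partial_\eta G(B)=0$. Your write-up presents the estimate as if the perfect square were untouched, so this is the gap to fill. Two further points are at the same level of informality as the paper itself and I would not press them: compactness of the characteristic solution operator for the transport equation on $L^2([0,B]\times[-\pi,\pi))$ (it is cleaner to close the fixed point in $G$ alone, where compactness is carried by the ODE solve), and the claimed $C^1$ regularity of $\varphi$ up to $\sin\phi=0$, which the paper's own counterexample for the Milne problem shows is delicate.
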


\bibliographystyle{siam}
\bibliography{GeometricRHT}

\begin{thebibliography}{10}

\bibitem{bensoussan1979boundary}
{\sc A.~Bensoussan, J.~L. Lions, and G.~C. Papanicolaou}, {\em Boundary layers
  and homogenization of transport processes}, Publications of the Research
  Institute for Mathematical Sciences, 15 (1979), pp.~53--157.

\bibitem{clouet2009milne}
{\sc J.~Clou{\"e}t and R.~Sentis}, {\em Milne problem for non-grey radiative
  transfer}, Kinetic \& Related Models, 2 (2009), p.~345.

\bibitem{evans1997partial}
{\sc L.~C. Evans}, {\em Partial differential equations and monge-kantorovich
  mass transfer}, Current developments in mathematics, 1997 (1997),
  pp.~65--126.

\bibitem{ghattassi2020diffusive}
{\sc M.~Ghattassi, X.~Huo, and N.~Masmoudi}, {\em On the diffusive limits of
  radiative heat transfer system i: Well-prepared initial and boundary
  conditions}, SIAM Journal on Mathematical Analysis, 54 (2022),
  pp.~5335--5387.

\bibitem{ghattassi2022convergence}
\leavevmode\vrule height 2pt depth -1.6pt width 23pt, {\em Diffusive limits of
  the steady state radiative heat transfer system: Boundary layers}, accepted
  for publication in Journal de Mathématiques Pures et Appliquées,  (2023).

\bibitem{Bounadrylayer2019GHM2}
\leavevmode\vrule height 2pt depth -1.6pt width 23pt, {\em Stability of the
  nonlinear milne problem for radiative heat transfer system}, Archive for
  Rational Mechanics and Analysis. arXiv preprint arXiv:2207.10769,  (2023).

\bibitem{gie2010boundary}
{\sc G.-M. Gie, M.~Hamouda, and R.~Temam}, {\em Boundary layers in smooth
  curvilinear domains: parabolic problems}, Discrete \& Continuous Dynamical
  Systems, 26 (2010), p.~1213.

\bibitem{gie2016recent}
{\sc G.-M. Gie, C.-Y. Jung, and R.~Temam}, {\em Recent progresses in boundary
  layer theory}, Discrete \& Continuous Dynamical Systems, 36 (2016), p.~2521.

\bibitem{ukai1986solutions}
{\sc S.~Ukai}, {\em Solutions of the boltzmann equation}, in Studies in
  Mathematics and its Applications, vol.~18, Elsevier, 1986, pp.~37--96.

\bibitem{wu2015geometric}
{\sc L.~Wu and Y.~Guo}, {\em Geometric correction for diffusive expansion of
  steady neutron transport equation}, Communications in Mathematical Physics,
  336 (2015), pp.~1473--1553.

\end{thebibliography}

\end{document}